\documentclass[oneside,notitlepage,12pt]{article}

\pagestyle{plain}

\usepackage{amssymb}
\usepackage[leqno]{amsmath}
\usepackage{amsfonts}
\usepackage{amsopn}
\usepackage{amstext}
\usepackage{amsthm}
\usepackage{enumitem}

\usepackage{tikz}
\usetikzlibrary{arrows,topaths}
\usetikzlibrary{shapes}
\usetikzlibrary{cd}
\usetikzlibrary{decorations.pathmorphing}
\newcommand{\nx}{\pgfmatrixnextcell}

\usepackage[colorlinks, linkcolor=blue, backref]{hyperref}

\usepackage{calrsfs}

\usepackage{clock} 

\textheight=22cm
\textwidth=15cm
\hoffset=-1cm
\voffset=-2cm
\setlength{\parindent}{0pt}

\frenchspacing

\providecommand{\cal}{\mathcal}
\renewcommand{\Bbb}{\mathbb}

\newenvironment{pf}{\begin{proof}}{\end{proof}}



\newcommand{\Ef}{{\cal{F}}}

\newcommand{\Nat}{{\Bbb{N}}}


\newcommand{\sig}{\sigma}

\renewcommand{\phi}{\varphi}
\renewcommand{\rho}{\varrho}

\newcommand{\rest}{\restriction}

\newcommand{\ntr}{{n\in\omega}}

\newcommand{\loe}{\leqslant}
\newcommand{\goe}{\geqslant}

\newcommand{\subs}{\subseteq}
\newcommand{\sups}{\supseteq}
\newcommand{\nnempty}{\ne\emptyset}

\newcommand{\ovr}{\overline}
\renewcommand{\iff}{\Longleftrightarrow}


\newcommand{\cl}{\operatorname{cl}}

\newcommand{\diam}{\operatorname{diam}}


\newcommand{\id}[1]{{\operatorname{i\!d}_{#1}}} 





\newcommand{\concat}{{}^\smallfrown}

\newcommand{\setof}[2]{\{#1\colon #2\}}

\newcommand{\sett}[2]{\{#1\}_{#2}}
\newcommand{\sn}[1]{\{#1\}} 
\newcommand{\dn}[2]{\{#1,#2\}} 
\newcommand{\pair}[2]{(#1, #2)} 


\newcommand{\map}[3]{#1\colon #2 \to #3} 
\newcommand{\img}[2]{#1[#2]} 

\newcommand{\fin}[1]{[#1]^{<\omega}}

\newcommand{\fra}{Fra\"iss\'e}

\providecommand{\nat}{\omega}

\newcommand{\ciag}[1]{{\sett{{#1}_n}{\ntr}}}


\newcommand{\iso}{\approx}

\newcommand{\ciagi}[1]{\sig{#1}}

\newcommand{\cmp}{\circ} 




\newcommand{\separator}{\begin{center}***\end{center}}

\newcommand{\proto}[1]{{\mathbb S_\kappa}}


\newtheorem{tw}{Theorem}[section]
\newtheorem{wn}[tw]{Corollary}
\newtheorem{lm}[tw]{Lemma}
\newtheorem{prop}[tw]{Proposition}

\theoremstyle{definition}
\newtheorem{df}[tw]{Definition}
\newtheorem{example}[tw]{Example}

\newtheorem{question}[tw]{Question}
\newtheorem{uwgi}[tw]{Remark}



\newcommand{\age}{\operatorname{Age}}

\newcommand{\fG}{\mathfrak G} 

\newcommand{\catligraphs
}{\fG_{\rightarrowtail}} 
\newcommand{\catrigraphs
}{\fG^{\twoheadleftarrow}} 
\newcommand{\catepgraphs
}{\fG^{\twoheadleftarrow}_{\rightarrowtail}} 


\newcommand{\rgraph}{{\mathbf R}} 
\newcommand{\PG}{\mathbb P} 
\newcommand{\Hen}[1]{\mathbf H_{#1}}

\newcommand{\ufr}{{\mathbf U_{\operatorname{FR}}}}
\newcommand{\ufrctd}{{\mathbf U_{\operatorname{FR}}^{\circ}}}

\newcommand{\uppr}{{\mathbf U_{\operatorname{PPR}}}} 
\newcommand{\upprctd}{{\mathbf U_{\operatorname{PPR}}^{\circ}}} 

\newcommand{\pufr}{{\ovr{\mathbf U}_{\operatorname{FR}}}}
\newcommand{\pufrctd}{{\ovr{\mathbf U}_{\operatorname{FR}}^{\circ}}}

\newcommand{\puppr}{{\ovr{\mathbf U}_{\operatorname{PPR}}}} 

\newcommand{\pupprctd}{{\ovr{\mathbf U}_{\operatorname{PPR}}^{\circ}}} 

\newcommand{\soc}{{\mathbf S}} 
\newcommand{\psoc}{\ovr{\mathbf S}} 

\newcommand{\origin}{\Theta} 

\newcommand{\ewa}[3]{\begin{tikzcd}{#2} \arrow[r, rightsquigarrow, "{#1}"] \nx {#3} \end{tikzcd}}


\title{Evolutions of finite graphs}
\author{
{\sc Stefan Geschke}\\
{\small Department of Mathematics}\\
{\small University of Hamburg}\\
{\small Bundesstr. 55 (Geomatikum)}\\
{\small 20146 Hamburg, Germany}
\and
{\sc Szymon G{\l}\c{a}b}\\
{\small Lodz University of Technology}\\ 
{\small W\'{o}lcza\'nska 215, 93-005 \L\'od\'z, Poland}
\and
{\sc Wies{\l}aw Kubi\'s}\footnote{Research supported by project EXPRO 20-31529X (Czech Science Foundation).}\\
{\small Institute of Mathematics, Czech Academy of Sciences}\\
{\small Žitná 25, 115 67 Praha 1, Czech Republic}
}
\date{\today\ \clocktime}

\begin{document}

\maketitle

\begin{abstract}
	Every countable graph can be built from finite graphs by a suitable infinite process, either adding new vertices randomly or imposing some rules on the new edges. On the other hand, a profinite topological graph is built as the inverse limit of finite graphs with graph epimorphisms.
	We propose to look at both constructions simultaneously. We consider countable graphs that can be built from finite ones by using both embeddings and projections, possibly adding a single vertex at each step. We show that the Rado graph can be built this way, while Henson's universal triangle-free graph cannot.
	We also study the corresponding profinite graphs. Finally, we present a concrete model of the projectively universal profinite graph, the projective \fra\ limit of finite graphs, showing in particular that it has a dense subset of isolated vertices.
	
	\ 
	
	\noindent
	\textbf{MSC (2020).}
	Primary:
	05C76, 
	05C63, 
	03C50; 
	Secondary:
	05C60, 
	05C10. 
	
	\noindent
	\textbf{Keywords.} Profinite graph, retraction, graph evolution, \fra\ limit.
\end{abstract}

\tableofcontents

\section{Introduction}

An infinite graph, or any other first order mathematical structure, is often presented in the form $G = \bigcup_{\ntr}G_n$, where $\ciag{G}$ is a chain of finite substructures, namely, $G_n \subs G_{n+1}$ and all the relations in $G_n$ are induced from $G_{n+1}$. Perhaps the most precise way of representing $G$ is requiring that $G_{n+1} \setminus G_n$ contains a single vertex for each $\ntr$. Obviously, every countable graph can be built this way and there is nothing more to say about it. On the other hand, one may impose some conditions on the inclusions $G_n \subs G_{n+1}$. It turns out that requiring $G_n$ to be a retract of $G_{n+1}$ for each $n$, restricts the class of graphs significantly.

As one of the motivations, consider a finite graph $G$ representing the acquaintance of members of a given group of people (e.g. a social network, a scientific society). Such a graph evolves in time, at a single step adding a new vertex representing a new member $w$ that was accepted by a fixed member $v$ of $G$. By this way, mapping $w$ to $v$ and keeping the identity on $G$ is a graph homomorphism, assuming all the members of $G$ that are ``friends'' with $w$ have already been known by $v$ (who in fact can have more ``friends'').
An extreme case is when $G$ has a member $v$ connected to all other members (we will later call it a \emph{sentinel}). In that case, $v$ can ``invite'' anybody, no matter what the adjacency relation to a new member is. In other words, for every graph $G' \sups G$, the mapping $\map r{G'}G$ defined by $r(x) = v$ for $x \in G' \setminus G$ and $r(x) = x$ for $x \in G$, is a retraction. We shall consider graphs with loops, so that edges can be collapsed by homomorphisms. Without this assumption, a complete graph would admit no retractions except the identity, while we would like to see every of its induced subgraphs as a retract.

We are going to look at graphs that are inverse limits of nonempty finite graphs, typically called \emph{profinite}.
In fact, profinite graphs have been studied in various contexts by several authors, see e.g. \cite{GilRib}, \cite{AuiSte}.
We restrict attention to undirected graphs with unique edges, although some ideas can be easily adapted to the more general case of directed graphs, possibly with multiple arrows.

It is natural to ask for a projective counterpart of the random graph, that is, a graph represented as the inverse limit of finite graphs which would be ``projectively homogeneous'' with respect to finite graphs
(of course, in this setting we have to exclude the empty graph).
It turns out that such a graph indeed exists, which actually follows from the general theory of \fra\ limits (see \cite{DrGoe92}, \cite{IrwSol}, \cite{K_flims}).
We discuss it in Section~\ref{SecPGebrgihbe}.
From the graph-theoretic point of view, the projective random graph is not extremely interesting, as it does not contain paths of length $>1$.
In particular, adjacency is a closed equivalence relation and both the graph and its quotient with respect to this relation are homeomorphic to the Cantor set (Proposition~\ref{TheoremCamerlo} below, already proved by Camerlo~\cite{Camerlo}).

That is why we propose a new natural way of looking at compact graphs.
Namely, we consider both embeddings and projections at the same time.
To be more precise, we consider graphs that can be obtained as inverse limits of finite graphs, where all bonding maps are right-invertible in the category of graphs (we assume that each vertex is adjacent to itself).
This leads to several interesting examples, which are described in Section~\ref{SectionPrelim} below.

The theory of \fra\ sequences and limits was primarily founded in model theory, and then generalized to the framework of category theory. We want our paper to be easily accessible to readers interested in graph theory, and therefore we decided to avoid category-theoretic and model-theoretic notation, whenever possible. We refer to \cite{K_flims} for a precise categorical approach to \fra\ theory.

The paper is organized as follows. In the preliminary section we start from a brief description of the random graph construction. The random graph can be viewed as a \fra\ limit in the class (or category) of finite graphs with embeddings. Then we consider the class of all finite graphs with quotient mappings. Limits of inverse sequences turn out to be compact topological graphs on zero-dimensional metrizable spaces, or closed graphs on closed subsets of the Cantor space $2^\omega$; they are typically called \emph{profinite graphs} (we exclude the empty graph here). Next, we pass to retractions of graphs. A sequence of finite graphs with both embeddings and retractions has the limit consisting of two graphs: a profinite graph and its countable dense induced subgraph. We call the latter one \emph{finitely retractable}. Finally, we introduce \emph{graph evolutions}, namely, processes of adding a new vertex together with a retraction; graphs arising from such evolutions will be called \emph{point-by-point retractable} and we exhibit a special subclass of \emph{sociable graphs}. All these classes of mappings lead to different universal graphs. Their properties, which follow from general categorical \fra\ theory, are presented. In the preliminary section we also show topological properties of the set of all isolated vertices in profinite graphs and we show that compact graphs cannot be $\omega$-saturated. In Section \ref{SecPGebrgihbe} we show the properties of projectively universal profinite graphs which can be deduced from \fra\ theory. In Section \ref{SectionModelQuotient} we construct the model of such a graph.
In Section \ref{SectionRandomGraph} we show how the random graph is related to the notions defined in the previous sections. More precisely, we show that the random graph is sociable, and it can be embedded into a closed graph on the Cantor space as a dense subspace. It does not immediately follow from the definition whether there are countable graphs which are not finitely retractable. In Section \ref{SectionHensonGraphs} we show that Henson's universal $K_n$-free graphs are such examples. It can be easily shown that the cycle $C_5$ is not point-by-point retractable and $C_4$ is not sociable. These simple examples show that the class of sociable graphs is a proper subclass of point-by-point retractable graphs, which in turn is a proper subclass of finitely retractable graphs. We show, in particular, that finite retracts of sociable graphs are sociable. As a corollary, we obtain that the universal finitely retractable graph $\ufr$ is not sociable, as every finite graph is its retract.

\section{Preliminaries}\label{SectionPrelim}

Throughout this note we assume that the edge relation of a graph is reflexive, in other words, every vertex has a loop. Our assumption is necessary in the study of graph \emph{homomorphisms}, namely, edge-preserving mappings. For example, we would like the one-element graph to be a homomorphic image of any graph, while not allowing loops a graph homomorphism would be one-to-one on every complete subgraph. Below we formally recall the necessary definitions.

By a \emph{graph} we mean a structure of the form $G = \pair V E$, where $E$ is a symmetric and reflexive binary relation on $V$.
As usual, the elements of $E$ are called \emph{edges} and the elements of $V$ are called \emph{vertices}.
We say $x,y$ are \emph{adjacent} if $\pair xy \in E$.
We shall often identify $G$ and $V$.
A \emph{homomorphism of graphs} is a map $\map f G H$ that preserves edges. Note that graph homomorphisms can map non-adjacent pairs to adjacent pairs. Furthermore, reflexivity allows for collapsing any subset of a graph to a single vertex.

A quite common definition of a (simple) graph is a structure of the form $\pair{V}{E}$, where $E$ is a collection of precisely 2-element subsets of $V$, which is equivalent to an irreflexive symmetric binary relation. In that case, homomorphisms are not allowed to collapse edges. On the other hand, from the model-theoretic point of view there is no difference, as both structures (the reflexive and irreflexive one) are inter-definable, exactly in the same way as strict partial orderings are inter-definable with reflexive partial orderings. The only practical difference is the notion of a homomorphism.

Given a graph $G$, an \emph{induced subgraph} is a subset $H \subs G$ with the induced graph relation, namely, $x E y$ in $H$ if an only if $x E y$ in G. A \emph{subgraph} often refers to a subset $A$ of $G$ such that the inclusion $A$ is a graph homomorphism.

\subsection{Embeddings, quotients, retractions}

An \textit{embedding} $\map eGH$ is a one-to-one graph homomorphism such that whenever $\pair {e(x)}{e(y)}$ is adjacent in $H$, $\pair xy$ is adjacent in $G$ as well. That is, an embedding $\map eGH$ is an isomorphism between $G$ and the induced subgraph $\img eG$ of $H$.

A graph homomorphism $\map fXY$ is \textit{strict} if for every adjacent $p,q$ in the range of $f$ there exist adjacent $a,b\in X$ such that $f(a)=p$ and $f(b)=q$. A strict homomorphism which is additionally surjective will be called a \textit{quotient map}.
 
Let us present the following instructive example of a quotient map. Let $\map pHG$ be a mapping defined as in the picture---the nodes in $H$ are mapped onto the nodes in $G$ with the same symbols: 
\[
\begin{tikzpicture}[every node/.style={circle,inner sep=1.5pt,fill=white}]
  \node[regular polygon,regular polygon sides=3, draw, shape border rotate=180,fill=black] (A) at (0,1) {};
  \node[regular polygon,regular polygon sides=3, draw, shape border rotate=0](B) at (1,2) {};
  \node [regular polygon,regular polygon sides=4, draw, shape border rotate=0](C) at (1,0) {};
  \node [regular polygon,regular polygon sides=4, draw, shape border rotate=45,fill=black] (D) at (2,1) {};
  \node [circle, draw, shape border rotate=0] (E) at (3,1) {};
\node at (1,-0.8) {$G$};  

  \draw (A) -- (B)
        (B) -- (D)
        (C) -- (D)
        (D) -- (E)
        (A) -- (C);
        
 \node[regular polygon,regular polygon sides=3, draw, shape border rotate=180,fill=black] (1) at (6,1) {};       
   \node[regular polygon,regular polygon sides=3, draw, shape border rotate=0](2) at (6.2,2) {};
   \node[regular polygon,regular polygon sides=3, draw, shape border rotate=0](3) at (7,2.3) {};
\node [regular polygon,regular polygon sides=4, draw, shape border rotate=45,fill=black] (4) at (6.7,1.3) {};
\node [regular polygon,regular polygon sides=4, draw, shape border rotate=45,fill=black] (5) at (7.7,1.6) {};
  \node [circle, draw, shape border rotate=0] (6) 
at (7,1) {};
\node[regular polygon,regular polygon sides=3, draw, shape border rotate=180,fill=black] (7) at (7.8,2) {};     
  \node [regular polygon,regular polygon sides=4, draw, shape border rotate=0](8) at (8.5,1.2) {};
\node [regular polygon,regular polygon sides=4, draw, shape border rotate=0](9) at (8,1) {};
\node [regular polygon,regular polygon sides=4, draw, shape border rotate=45,fill=black](10) at (9,0.7) {};

\node[regular polygon,regular polygon sides=3, draw, shape border rotate=180,fill=black] (11) at (6,0) {};
  \node[regular polygon,regular polygon sides=3, draw, shape border rotate=0](12) at (6.5,0.4) {};
  \node [regular polygon,regular polygon sides=4, draw, shape border rotate=0](13) at (8,0) {};
  \node [regular polygon,regular polygon sides=4, draw, shape border rotate=45,fill=black] (14) at (9,0) {};
  \node [circle, draw, shape border rotate=0] (15) at (7.1,0.4) {};
  \node [circle, draw, shape border rotate=0] (20) at (6,0.5) {};
  \node [circle, draw, shape border rotate=0] (21) at (8.4,0.4) {};
    \node [circle, draw, shape border rotate=0] (16) at (9,1) {};
      \node [circle, draw, shape border rotate=0] (17) at (8.7,2) {};
  \node [regular polygon,regular polygon sides=4, draw, shape border rotate=0](18) at (9.5,1) {};
    \node [regular polygon,regular polygon sides=4, draw, shape border rotate=0](19) at (9.6,2) {};
  \node (p1) at (5.2,1) {};
  \node (p2) at (3.8,1) {};
  \node at (4.5,1.2) {$p$};

  \node at (9,-0.8) {$H$};  
\draw (1) -- (2)
	  (3) -- (4)
	  (5) -- (6)
	  (7) -- (8)
	  (9) -- (10)
	  (16) -- (17)
	  (18) -- (19);
\draw[->] (p1) -- (p2);
\end{tikzpicture}
\]
The graph $H$ consists only of isolated nodes, drawn at the bottom, and paths of length $1$. Note that $p$ is a quotient map. This example shows that any graph is a quotient of some fragmented graph---that is, a graph every component of which has at most 2 nodes. Note also that if we remove from $H$ the edge\; 
\begin{tikzpicture}[every node/.style={circle,inner sep=1.5pt,fill=white}]
  \node[regular polygon,regular polygon sides=3, draw, shape border rotate=180,fill=black] (A) at (0,0) {};
  \node[regular polygon,regular polygon sides=3, draw, shape border rotate=0](B) at (1,0) {};
  \draw (A) -- (B);
\end{tikzpicture}
, then $p$ remains a homomorphism that is not strict.

\subsection{Inverse limits and profinite graphs}

Consider the following sequence of finite graphs
\[
\begin{tikzcd}
G_0&\arrow[l, "q_{0}"'] G_1&\arrow[l, "q_{1}"']G_2&\arrow[l,"q_{2}"']\cdots
\end{tikzcd}
\]  
where $\map{q_{i}}{G_{i+1}}{G_i}$ is a quotient mapping. Such a sequence has the limit called the \emph{inverse limit} $G=\varprojlim\langle (G_n)_{n\in\omega},(q_{k}^n)_{k\leq n}\rangle$, where $q^n_k=q_k\circ q_{k+1}\circ\cdots\circ q_{n-1}$ maps $G_n$ onto $G_k$ ($q^n_k$ is a quotient as a composition of quotients). More precisely $G$ is a graph on the set $V:=\{(x_i)\in\prod_{i\in\omega}V_i:q_{i}(x_{i+1})=x_i\}$ and its graph relation is given by
\[
(x_n)E(y_n)\iff \forall n\in\omega(x_n E_n y_n).
\] 
If we equip each $V_i$ with the discrete topology, then the product $\prod V_i$ admits the product topology, which is given by basic open sets of the form
\[
U_{(t_0,t_1,\dots,t_{k-1})}:=\{t_0\}\times\{t_1\}\times\dots\times\{t_{k-1}\}\times V_{k}\times V_{k+1}\times\dots
\]
for any $k\in\omega$ and any $t_i\in V_i$. With this product topology, $\prod V_i$ becomes a zero-dimensional compact metrizable space. The space $V$ as a closed subset of $\prod V_i$ is a zero-dimensional compact metrizable space as well, and $E$ as a subset of $V\times V$ is closed, in other words $E$ is a closed relation. If the topological space $V$ has no isolated points, then $V$ is homeomorphic to the Cantor space $2^\omega$. This happens if and only if for any $k\in\omega$ and $x\in V_k$ there is $n>k$ and two distinct $y,z\in V_n$ with $q^n_k(y)=q^n_k(z)=x$.

A topological graph $H= \pair {V_H}{E_H}$ is called a \emph{profinite graph} if $V_H \nnempty$ and there is $\map fGH$  which is a graph isomorphism and at the same time a homeomorphism of $V$ and $V_H$ for some inverse limit $G=\pair VE$ of finite graphs. We already know that profinite graphs are (nonempty) compact graphs on zero-dimensional metrizable compact spaces. It turns out that this actually characterizes profinite graphs.  
 
\begin{prop}
Let $E_H$ be a symmetric and reflexive closed relation on a nonempty zero-dim\-en\-sional metrizable compact space $V_H$. Then $H=\pair {V_H}{E_H}$ is a profinite graph. 
\end{prop}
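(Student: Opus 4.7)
The plan is to realize $H$ as the inverse limit of a sequence of finite graphs with quotient bonding maps.

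Since $V_H$ is a nonempty compact metrizable zero-dimensional space, I fix a compatible metric $d$ on $V_H$ and choose finite clopen partitions $\mathcal{P}_0,\mathcal{P}_1,\dots$ of $V_H$ such that $\mathcal{P}_{n+1}$ refines $\mathcal{P}_n$ and $\operatorname{mesh}_d(\mathcal{P}_n)\to 0$; this is standard. Put $V_n=\mathcal{P}_n$, and declare two blocks $A,B\in V_n$ adjacent in $G_n$ precisely when there exist $a\in A$ and $b\in B$ with $a\, E_H\, b$; let $\map{q_n}{V_{n+1}}{V_n}$ send each $\mathcal{P}_{n+1}$-block to the unique $\mathcal{P}_n$-block containing it. Since $E_H$ is symmetric and reflexive, so is each $E_n$, hence each $G_n=\pair{V_n}{E_n}$ is a finite nonempty graph in our sense.

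The second step is to verify that each $q_n$ is a quotient map. Surjectivity holds because each $\mathcal{P}_n$-block decomposes into $\mathcal{P}_{n+1}$-blocks, and $q_n$ preserves adjacency by the definition of $E_n$. Strictness is the essential point: if $A,B\in V_n$ are adjacent, fix witnesses $a\in A$, $b\in B$ with $a\, E_H\, b$, and take the $\mathcal{P}_{n+1}$-blocks $\tilde A\ni a$ and $\tilde B\ni b$; they are adjacent in $G_{n+1}$ and satisfy $q_n(\tilde A)=A$, $q_n(\tilde B)=B$.

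Finally, the map $\map{\Phi}{V_H}{\varprojlim V_n}$ sending $x$ to the sequence of $\mathcal{P}_n$-blocks containing $x$ is a homeomorphism; this is the classical realization of a compact zero-dimensional metrizable space as an inverse limit of finite sets. It remains to check that $\Phi$ identifies $E_H$ with the limit edge relation $E$, where $(x_n)\,E\,(y_n)$ means $x_n\, E_n\, y_n$ for every $n$. One direction is immediate: if $x\, E_H\, y$, then the blocks containing $x$ and $y$ witness adjacency at every level. Conversely, suppose $\Phi(x)\,E\,\Phi(y)$. Then for each $n$ there exist $a_n,b_n\in V_H$ with $a_n\, E_H\, b_n$, where $a_n$ lies in the $\mathcal{P}_n$-block of $x$ and $b_n$ lies in the $\mathcal{P}_n$-block of $y$. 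The mesh condition forces $a_n\to x$ and $b_n\to y$, and since $E_H$ is closed, $x\, E_H\, y$. I expect this last invocation of closedness to be the crux of the argument; everything else is routine bookkeeping about partitions and inverse limits.
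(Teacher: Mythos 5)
Your proposal is correct and follows essentially the same route as the paper: the paper's choice of $V_n=p_n(V_H)\subseteq\{0,1\}^n$ is precisely a specific sequence of refining clopen partitions with mesh tending to zero, and the definition of $E_n$ via adjacent witnesses, the strictness check, and the use of closedness of $E_H$ for the converse implication $\Phi(x)\,E\,\Phi(y)\Rightarrow x\,E_H\,y$ all match. If anything, you make explicit the compactness/closedness argument for that last implication, which the paper states without elaboration.
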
 

\begin{proof}
Each zero-dimensional metrizable compact space is homeomorphic to a closed subset of the Cantor space $\{0,1\}^\omega$. Therefore we may assume that $V_H$ is a closed subset of $\{0,1\}^\omega$. For $n\geq 1$ let $\map {p_n}{V_H}{\{0,1\}^n}$ be given by $$p_n(x)=\big(x(0),\dots,x(n-1)\big).$$
Let $V_n:=p_n(V_H)$ and 
\[
\big(a(0),\dots,a(n-1)\big)E_n\big(b(0),\dots,b(n-1)\big) \iff 
\]
\[
\exists x,y\in V_H
 \Big(xE_Hy \wedge\forall i<n \big(x(i)=a(i)\wedge y(i)=b(i)\big)\Big). 
\]
Let $G_n:=(V_n,E_n)$. By the definition of $E_n$, $p_n$ is a quotient map. For $k\leq n$ let $p_k^n$ denote the projection from $\{0,1\}^{n}$ to $\{0,1\}^k$ given by $$p_{k}^n\big(x(0),\dots,x(n-1)\big)=\big(x(0),\dots,x(k-1)\big).$$
Let $V:=\{(t_n)\in\prod_{n\geq 1} V_n:(\forall\; n)\;\; p^{n+1}_{n}(t_{n+1})=t_n\}$ and define a graph relation $E$ by
\[
(t_n)E(s_n)\iff\forall k(t_k E_k s_k).
\] 
Then $G:=(V,E)$ is an inverse limit of $\pair {G_n}{p_{k}^n}$. Let $\map f {V_H}{\prod_{n\geq 1} V_n}$ be given by $f(x)=(x_1,x_2,x_3,\dots)$ where $x_n=\big(x(0),\dots,x(n-1)\big)$. Note that $p^{n+1}_{n}(x_{n+1})=x_n$ which means that $f$ maps $V_H$ to $V$. Note that $\map gV{V_H}$ given by $g(t_1,t_2,t_3,\dots)=\big(t_1(0),t_2(1),t_3(2),\dots\big)$ is the inverse of $f$, and for any $x,y\in V_H$,
\[
xE_Hy\iff\forall n(x_n E_n y_n)\iff f(x)E f(y).
\]
Thus $f$ is a graph-isomorphism. Moreover
\begin{align*}
	f\Big(V_H\cap\big(\{x(0)\}\times & \dots\times \{x(k-1)\}\times\{0,1\}\times\{0,1\}\times\dots\big)\Big) \\ &=
	\{x_1\}\times\dots\times\{x_k\}\times V_{k+1}\times V_{k+2}\times\dots
\end{align*}
which shows that $f$ is a homeomorphism of $V_H$ and $V$. Therefore $H$ is a profinite graph.
\end{proof}


\subsubsection{Isolated vertices}

A vertex $v$ in a graph $G$ is \emph{isolated} if it is adjacent to itself only. This should not be confused with a topologically isolated point.

\begin{lm}\label{Gdelta}
	Let $G$ be a graph on a compact metric space $(X,d)$ with a closed edge relation $E$.
	Then the set of isolated vertices of $G$ is a $G_\delta$ subset of $X$.  
\end{lm}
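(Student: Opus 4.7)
The plan is to characterize the set of non-isolated vertices as a countable union of closed sets by using the metric to separate ``$w \ne v$'' into countably many conditions of the form ``$d(v,w) \goe 1/n$''. Concretely, a vertex $v$ is non-isolated precisely when there exists $w \ne v$ with $\pair vw \in E$, and this is equivalent to the existence, for some positive integer $n$, of a vertex $w$ with $d(v,w) \goe 1/n$ and $\pair vw \in E$.

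For each $n \goe 1$, define
\[
A_n \defi \setof{v \in X}{\exists\, w \in X\ \ d(v,w) \goe 1/n \text{ and } \pair vw \in E}.
\]
First I would show that $A_n$ is closed. Let $F_n \defi E \cap \setof{\pair vw \in X \times X}{d(v,w) \goe 1/n}$. Since $E$ is closed by hypothesis and $d$ is continuous, $F_n$ is a closed, hence compact, subset of $X \times X$. Then $A_n$ is exactly the image of $F_n$ under the projection onto the first coordinate, and a continuous image of a compact set is compact, so in particular closed in $X$.

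Next I would verify that the set $N$ of non-isolated vertices equals $\bigcup_{n \goe 1} A_n$. One inclusion is immediate: any $v \in A_n$ witnesses non-isolation via the corresponding $w$, which satisfies $d(v,w) > 0$ and hence $w \ne v$. For the converse, if $v$ is non-isolated then some $w \ne v$ is adjacent to $v$, and taking $n$ with $1/n \loe d(v,w)$ shows $v \in A_n$. It follows that the set of isolated vertices is $X \setminus \bigcup_n A_n = \bigcap_n (X \setminus A_n)$, which is a countable intersection of open sets, i.e.\ a $G_\delta$ subset of $X$.

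There is no genuine obstacle here; the only subtle point is to realize that closedness of $A_n$ needs either compactness of $X$ or closedness of the projection along a compact fiber, which the hypothesis that $X$ is a compact metric space provides. Reflexivity of $E$ is not even used, because the condition $d(v,w) \goe 1/n$ automatically excludes $w = v$.
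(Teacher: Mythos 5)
Your proof is correct, and it takes a genuinely different route from the paper's. You show that the set $N$ of non-isolated vertices is $F_\sigma$ (indeed $\sigma$-compact): each $A_n$ is the first-coordinate projection of the compact set $E\cap\setof{\pair vw}{d(v,w)\goe 1/n}\subs X\times X$, hence compact and closed, and $N=\bigcup_n A_n$ because $w\ne v$ unfolds into the countably many conditions $d(v,w)\goe 1/n$. The paper instead works from the isolated side: for each isolated vertex $x$ and each $\varepsilon>0$ it uses a finite subcover of the compact set $\setof{y}{d(x,y)\goe\varepsilon}$ to produce a radius $\delta(x,\varepsilon)$ with the property that no point within $\delta(x,\varepsilon)$ of $x$ has an edge reaching distance $\goe\varepsilon$ from $x$, and then exhibits the isolated set directly as $\bigcap_n V_{1/n}$, where $V_\varepsilon$ is the union of the corresponding open balls. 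Both arguments hinge on compactness of $X$, and both generalize verbatim to $E$-isolated points for an arbitrary closed relation (a point you note explicitly and the paper records only in a remark afterwards). Your version is shorter and more transparent; the paper's version has the side benefit of producing explicit quantitative neighborhoods of the isolated set, in the spirit of the clopen-neighborhood characterization of Lemma~\ref{LMergre} that follows. For the statement as given, your argument is complete.
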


\begin{proof}
	Let $I$ denote the set of isolated vertices of the graph $G$.
	For each vertex $x\in I$ and $\varepsilon>0$ we define a real number $\delta(x,\varepsilon)$ as follows.
	
	The set $Y=\{y\in X:d(x,y)\geq\varepsilon\}$ is closed.
	Since $x$ is isolated and the edge relation $E$ is closed, for each $y\in Y$ there are an open neighborhood of $U_y$ of $y$ and $\delta_y>0$ such that
	$$\forall x',y'\in X:(d(x,x')<\delta_y\wedge y'\in U_y)\Rightarrow (x',y')\not\in E.$$
	Since $Y$ is compact, there are $n$ and $y_1,\dots,y_n\in Y$ such that $$Y\subseteq U_{y_1}\cup\dots\cup U_{y_n}.$$
	Let $\delta(x,\varepsilon)=\min(\varepsilon,\delta_{y_1},\dots,\delta_{y_n}).$
	Now, if $x'\in X$ has distance less than $\delta(x,\varepsilon)$ from $x$ and $y\in Y$, then $(x',y)\not\in E$.
	
	For each $\varepsilon>0$ let 
	$$V_\varepsilon=\{x'\in X:\exists x\in I \; (d(x,x')<\delta(x,\varepsilon))\}.$$
	Clearly, $V_\varepsilon$ is open.
	We claim that $$I=\bigcap_{n=1}^\infty V_{1/n}.$$
	
	It is clear that $I\subseteq\bigcap_{n=1}^\infty V_{1/n}.$
	On the other hand, if $y\in X\setminus I$, then there is $z\in X$ such that $(y,z)\in E$.  
	Let $n\in\mathbb N$ be such that $2/n<d(x,y)$.
	We show that $y\not\in V_{1/n}$.
	
	Suppose for a contradiction that $y\in V_{1/n}$.
	Let $x\in I$ be such that $d(x,y)<\delta\left(x,1/n\right)\leq 1/n$.
	Since $d(y,z)>2/n$, by the triangle inequality $d(x,z)>1/n$.
	By the choice of $\delta\left(x,1/n\right)$, we obtain that $(y,z)\not\in E$, which yields a contradiction.
	
	This shows that $I=\bigcap_{n=1}^\infty V_{1/n}$, proving that $I$ is a $G_\delta$ subset of $X$.
\end{proof}

Let us remark that Lemma \ref{Gdelta} can be formulated in a more general context. Having a relation $E$ on $X$ let us say that $x\in X$ is \emph{$E$-isolated} if $(x,y)\notin E$ for any $y\neq x$. If $X$ is compact and $E$ is closed, then repeating the above proof we obtain that the set of $E$-isolated points is a $G_\delta$ subset of $X$. 

\begin{lm}\label{LMergre}
	Let $G=(V,E)$ be a profinite graph and let $x\in V$. Then the following two conditions are equivalent:
	\begin{itemize}[itemsep=0pt]
		\item[{\rm(1)}] $x$ is an isolated vertex;
		\item[{\rm(2)}] there is a decreasing sequence $\ciag U$ of clopen neighborhoods of $x$ such that $\diam(U_n)\to 0$ and
		\[
		(\forall n\in\omega)(\forall y\in V\setminus U_n)(\forall z\in U_{n+1})\;\;\neg(z E y).
		\]
	\end{itemize}
\end{lm}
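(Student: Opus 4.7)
The plan is to prove the two directions separately, with $(2)\Rightarrow(1)$ being essentially immediate and $(1)\Rightarrow(2)$ requiring the bulk of the work.

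For $(2)\Rightarrow(1)$, suppose $\ciag U$ is as described and $x E y$ for some $y\in V$. Since $\diam(U_n)\to 0$ and $x\in U_n$ for all $n$, if $y\ne x$ then eventually $y\in V\setminus U_n$; applying the condition with $z=x\in U_{n+1}$ yields $\neg(xEy)$, a contradiction. Hence $y=x$ and $x$ is isolated.

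For $(1)\Rightarrow(2)$, the plan is to introduce the ``edge-neighborhood'' operator $N(W)=\pr_2\bigl((W\times V)\cap E\bigr)$, which is closed whenever $W$ is clopen, since $E$ is closed and $W$ is compact. Using zero-dimensionality and metrizability of $V$, fix any decreasing basis $(W_k)$ of clopen neighborhoods of $x$ with $\diam(W_k)\to 0$. The key claim is that $\bigcap_{k} N(W_k)=\{x\}$: any $y$ in the intersection gives a sequence $z_k\in W_k$ with $z_k E y$, and since $z_k\to x$ and $E$ is closed, we get $xEy$, forcing $y=x$ by isolation.

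The construction of $(U_n)$ proceeds inductively: set $U_0=V$, and given a clopen neighborhood $U_n$ of $x$, use the key claim together with the standard compactness argument for a decreasing family of closed sets in the compact space $V$ (whose intersection $\{x\}$ lies inside the open set $U_n$) to find $k$ large enough that $N(W_k)\subseteq U_n$; by enlarging $k$ if necessary we may also ensure $W_k\subseteq U_n$ and $\diam(W_k)<1/(n+1)$. Set $U_{n+1}:=W_k$. Then $U_{n+1}$ is clopen, contains $x$, has small diameter, and satisfies $N(U_{n+1})\subseteq U_n$, which is exactly the required condition.

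The main obstacle is verifying the key claim $\bigcap_k N(W_k)=\{x\}$; everything else is a routine compactness/zero-dimensionality argument. The claim is where both the hypothesis that $x$ is isolated and the fact that $E$ is closed (a defining feature of profinite graphs) are used essentially.
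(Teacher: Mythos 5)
Your proof is correct. The $(2)\Rightarrow(1)$ direction is the same argument as the paper's (you argue by contradiction, the paper by contraposition, but the content --- eventually $y\in V\setminus U_n$ while $x\in U_{n+1}$ --- is identical). For $(1)\Rightarrow(2)$ you take a genuinely different, though equivalent, route. The paper works locally at each inductive step: for the non-edge $(y,x)$ with $y\in V\setminus U_n$, closedness of $E$ gives a product neighborhood $W^1_y\times W^0_y$ missing $E$; compactness of $V\setminus U_n$ yields a finite subcover, and $U_{n+1}$ is the intersection of the corresponding $W^0_{y_i}$, shrunk to have small diameter. You instead package the argument into the operator $N(W)=\pr_2\bigl((W\times V)\cap E\bigr)$, observe it is closed for clopen $W$ (projection of a compact set), prove once and for all that $\bigcap_k N(W_k)=\{x\}$ over a clopen neighborhood basis (this is where isolation and closedness of $E$ enter, via $z_k\to x$, $z_kEy$, hence $xEy$), and then extract $U_{n+1}=W_k$ with $N(W_k)\subseteq U_n$ by the finite intersection property. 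Both are compactness arguments of comparable length; yours has the advantage of isolating a reusable characterization of isolated vertices ($\bigcap_k N(W_k)=\{x\}$) and of making the inductive step a one-line application of nested closed sets, while the paper's covering argument is more self-contained and does not require fixing a basis in advance. One small point worth making explicit in a write-up: $x\in N(W_k)$ for all $k$ because the edge relation is reflexive, so the intersection is exactly $\{x\}$ and not empty --- this is harmless but should be said.
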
 

\begin{proof}
	If $x$ is not isolated, then there is $y\neq x$ connected with $x$. Take any decreasing sequence $\ciag U$ of clopen neighborhoods of $x$ with $\diam(U_n)\to 0$. Find $n$ such that $\diam(U_n)$ is strictly less than the distance from $x$ to $y$. Then $y\in V\setminus U_n$, $x\in U_{n+1}$ and $x E y$. This shows that there is no sequence $\ciag U$ fulfilling (2).
	
	Assume that $x$ is an isolated vertex. Let $U_0$ be any clopen neighborhood of $x$ with $\diam(U_0)<1$. Since $E$ is a closed subset of $V\times V$, for any $y\in V\setminus U_0$ there are $W^1_y$ and $W^0_y$ such that 
	\[
	(W_y^1\times W_y^0)\cap E=\emptyset,\;\; y\in W^1_y\text{ and }x\in W^0_y.
	\]
	Since $V\setminus U_0$ is compact, there are $y_1,\dots,y_k\in V\setminus U_0$ such that $\bigcup_{i=1}^k W_{y_i}^1=V\setminus U_0$. Put $U_1:=\bigcap_{i=1}^k W^0_{y_i}$. Then
	\[
	(\forall y\in V\setminus U_0)(\forall z\in U_1)\;\;\neg(zEy). 
	\]
	By shrinking $U_1$ if necessary, we may assume that $\diam (U_1)<1/2$. 
	
	Proceeding inductively, we can construct the required sequence $\ciag U$. 
\end{proof}

In the next results we abuse the notation by identifying a graph $G$ with its set of vertices. The edge (adjacency) relation is still denoted by $E$.

\begin{lm}\label{THMKriterionIsoVerticess}
	Let $G$ be a profinite graph with the following property.
	\begin{enumerate}
		\item[{\rm(IV)}]  For every nonempty open set $U \subs G$ there is a nonempty open subset $W \subs U$ such that there are no edges between $W$ and $G \setminus U$.		
	\end{enumerate}
	Then $G$ has a dense $G_\delta$ subset of isolated vertices.
\end{lm}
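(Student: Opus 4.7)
The plan is to decompose the claim into two separate statements: the set $I$ of isolated vertices is a $G_\delta$ subset of $G$, and $I$ is dense in $G$. The first statement is immediate from Lemma~\ref{Gdelta}, since a profinite graph is by definition a compact metrizable graph with closed edge relation. So the entire content lies in proving density.

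For density, I would use Lemma~\ref{LMergre} as the target: to produce an isolated vertex inside an arbitrary nonempty open set $V \subs G$, it suffices to construct a decreasing sequence $U_0 \supseteq U_1 \supseteq \cdots$ of nonempty clopen sets contained in $V$ with $\diam(U_n) \to 0$ and satisfying property (2) of that lemma. The intersection $\bigcap_n U_n$ is then a single point $x \in V$ (by compactness and vanishing diameters), and Lemma~\ref{LMergre} forces $x$ to be isolated. Since $V$ was arbitrary, this gives density.

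The construction is by induction, with property (IV) as the essential ingredient. For the base step, use zero-dimensionality of $G$ to pick a nonempty clopen $U_0 \subs V$ with $\diam(U_0) < 1$. For the inductive step, given the nonempty clopen $U_n$, apply (IV) to obtain a nonempty open $W \subs U_n$ such that no edge joins $W$ to $G \setminus U_n$. Then, again by zero-dimensionality, shrink to a nonempty clopen $U_{n+1} \subs W$ with $\diam(U_{n+1}) < 1/(n+2)$. By construction $U_{n+1} \subs W$, so no edge joins $U_{n+1}$ to $G \setminus U_n$, verifying the quantifier clause of condition~(2) of Lemma~\ref{LMergre} at level~$n$.

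I do not expect a real obstacle; the argument is essentially a marriage of property~(IV) with the standard zero-dimensional shrinking procedure, feeding the output into Lemma~\ref{LMergre}. The only minor point to check is that one may always shrink an open set inside a zero-dimensional compact metric space to a clopen set of prescribed small diameter, which is a routine consequence of having a basis of clopen sets with diameters tending to zero.
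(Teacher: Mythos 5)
Your proposal is correct and follows essentially the same route as the paper: reduce to density via Lemma~\ref{Gdelta}, build a decreasing sequence of nonempty clopen sets with vanishing diameters using (IV), and apply Lemma~\ref{LMergre} to the unique point of the intersection. The only difference is that you spell out the zero-dimensional shrinking step in slightly more detail, which the paper leaves implicit.
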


\begin{proof}
	Thanks to Lemma~\ref{Gdelta}, we only need to show density.
	Fix a nonempty open set $U \subs G$. By induction, we can construct a decreasing sequence of nonempty clopen sets $\ciag U$ such that $U_0 \subs U$, $\diam(U_n) \to 0$ with respect to a fixed metric, and there are no adjacencies between $U_{n+1}$ and $G \setminus U_n$ for every $\ntr$. By compactness, there is a (unique) point $v \in \bigcap_{\ntr}U_n$.
	Lemma~\ref{LMergre} says that $v$ is an isolated vertex.	
\end{proof}

\begin{lm}\label{Weakly3SaturatedProfiniteHasNoIsolatedPoints}
	Let $G$ be a profinite graph. Assume that $H$ is a dense induced subgraph of $G$ which is positively $3$-saturated, i.e. for any $X\subset H$, $\vert X\vert< 3$, there is $x\in H\setminus X$ connected to every vertex in $X$. Then $G$ contains no isolated vertices.  
\end{lm}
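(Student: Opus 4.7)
My plan is to argue by contradiction: assume some $v \in G$ is an isolated vertex, then use Lemma~\ref{LMergre} to produce a nested system of clopen neighborhoods of $v$ whose edge-cut properties, combined with density and $3$-saturation of $H$, yield a contradiction.

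First, I would invoke Lemma~\ref{LMergre} to fix a decreasing sequence $(U_n)$ of clopen neighborhoods of $v$ with $\diam(U_n)\to 0$ such that there are no edges between $U_{n+1}$ and $V\setminus U_n$ for every $n$. Before using this, I would observe that $H$ contains at least three vertices: applying positive $3$-saturation successively to $X=\emptyset$, $X=\{x_0\}$, and $X=\{x_0,x_1\}$ produces three distinct vertices of $H$. Consequently $V$ itself has more than one point, so choosing any $w\in V\setminus\{v\}$ and a fixed metric $d$ on $V$, I can pick $N$ large enough that $\diam(U_N)<d(v,w)$, which guarantees $V\setminus U_N\neq\emptyset$.

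Then the main step: since $H$ is dense in $G$ and both $U_{N+2}$ and $V\setminus U_N$ are nonempty open subsets of $V$, I can choose $a\in H\cap U_{N+2}$ and $b\in H\setminus U_N$. Applying positive $3$-saturation to $X=\{a,b\}$ produces some $c\in H\setminus\{a,b\}$ with $cEa$ and $cEb$. The clopen constraints now pin down $c$ in two incompatible ways. From $cEa$ with $a\in U_{N+2}$, the condition at level $n=N+1$ forbids $c\in V\setminus U_{N+1}$, so $c\in U_{N+1}$. From $cEb$ with $b\in V\setminus U_N$, the condition at level $n=N$ forbids $c\in U_{N+1}$, so $c\notin U_{N+1}$. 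This contradiction completes the proof.

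There is no serious obstacle here: the argument is essentially a two-level separation. The only step requiring care is ensuring that both $U_{N+2}\cap H$ and $(V\setminus U_N)\cap H$ are nonempty, which is why I first checked that $H$ (hence $V$) has more than one point so that the sequence from Lemma~\ref{LMergre} can be truncated to start past a proper clopen neighborhood of $v$.
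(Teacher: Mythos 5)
Your proof is correct and follows essentially the same route as the paper's: invoke Lemma~\ref{LMergre} to get the nested clopen neighborhoods, pick by density a vertex of $H$ deep inside and one outside, and show a common neighbor guaranteed by positive $3$-saturation would have to lie both inside and outside an intermediate clopen set. You are in fact slightly more careful than the paper in justifying that $V\setminus U_N$ can be taken nonempty (via the observation that $H$, hence $G$, has more than one vertex), a point the paper passes over with ``passing to a subsequence.''
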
 


\begin{proof}
	Suppose to the contrary that $x\in G$ is an isolated vertex. By Lemma~\ref{LMergre}, there is a sequence $\ciag U$ fulfilling (2). Passing to subsequence we may assume that $V\setminus U_0\neq \emptyset$, $U_{k}\setminus U_{k+1}\neq\emptyset$. Take any $a\in (V\setminus U_0)\cap H$ and $b\in U_2\cap H$. By the positive $3$-saturation there is $y\in H$ connected to $a$ and $b$. Now, if $y\in U_1$, then $a\in V\setminus U_0$ and $y\in U_1$ are connected, which is a contradiction. If $y\in V\setminus U_1$, then $y\in V\setminus U_1$ and $b\in U_2$ are connected which is again a contradiction.
\end{proof}

\subsection{\fra\ limits}

Fix a class $\Ef$ consisting of finite structures of a fixed first-order language. We agree that $\Ef$ is defined up to isomorphism, that is, if $X \in \Ef$ then every structure isomorphic to $X$ is in $\Ef$.
Assume that there are at most countably many isomorphic types in $\Ef$ and every two structures from $\Ef$ embed into a common one that is also in $\Ef$ (this is called the \emph{joint embedding property}). Furthermore, let us assume that $\Ef$ has the \emph{amalgamation property}, namely, for every two embeddings $\map fZX$, $\map gZY$ with $Z,X,Y \in \Ef$ there exist $W \in \Ef$ and embeddings $\map {f'}XW$, $\map {g'}YW$ such that $f' \cmp f = g' \cmp g$. Finally, let us assume that $\Ef$ is \emph{hereditary}, namely, closed under taking substructures. 

A theorem of \fra~\cite{Fraisse} says that there is a unique, up to isomorphism, countable structure $U$ (called the \emph{\fra\ limit of $\Ef$}) such that $\Ef$ is the class of all (isomorphic copies of) finite substructure of $U$ and moreover $U$ is \emph{homogeneous}\footnote{Some authors use the term \emph{ultra-homogeneity} instead of \emph{homogeneity}, in order to avoid confusion with transitivity.} in the sense that every isomorphism between finite substructures of $U$ extends to an automorphism of $U$.
Actually, $U$ is characterized by \emph{$\Ef$-injectivity}, that is:
\begin{enumerate}
	\item[(I)] For every $A \subs B$ with $A, B \in \Ef$, every embedding of $A$ into $U$ extends to an embedding of $B$.
\end{enumerate}
More precisely, a countable structure is the \fra\ limit of $\Ef$ if and only if it is $\Ef$-injective and $\Ef$ is its \emph{age}, namely, $\Ef$ coincides with the class $\age(U)$ of all structures isomorphic to finite substructures of $U$. An important property of the \fra\ limit $U$ is \emph{universality}, namely, every countable structure whose age is contained in $\Ef$ embeds into $U$.

Perhaps the most classical examples of \fra\ limits are the linearly ordered set of all rational numbers (where $\Ef$ is the class of all finite linear orderings) and the \emph{random graph}, where $\Ef$ is the class of all finite graphs. There are of course many other examples, see e.g. the survey~\cite{Macpherson}.

As it happens, the concept of a \fra\ limit is purely category-theoretic, namely, instead of a class of structures with embeddings one can take an arbitrary category whose arrows play the role of embeddings. This was formalized by Droste and G\"obel~\cite{DrGoe92} and developed later by the third author~\cite{K_flims}.
The joint embedding property is sometimes called \emph{directedness}. The amalgamation property is defined in the same way as above, replacing embeddings by arrows of the category. The requirement that there are countably many isomorphic types needs to be strengthened, either assuming that the category is locally countable (every hom-set is countable) or assuming that the category is dominated by a countable subcategory, see~\cite{K_flims} for details.

A particular case is \emph{projective \fra\ theory} developed by Irwin and Solecki~\cite{IrwSol}, where one deals with a class of nonempty finite structures together with quotient homomorphisms. In that case, homogeneity means lifting isomorphisms between finite quotients to automorphisms, while universality means that every suitable inverse limit structure is a quotient.
We shall discuss the projective \fra\ limit of finite graphs in Section~\ref{SecPGebrgihbe}.

\subsubsection{The random graph}

A natural object in graph theory is the \emph{random graph}, that is, the unique countable graph $\rgraph$ with the following property:
\begin{enumerate}
	\item[(R)] Given disjoint finite sets $A, B \subs \rgraph$, there exists a vertex in $\rgraph \setminus (A \cup B)$ that is connected to all the elements of $A$ and to none of the elements of $B$.
\end{enumerate}
The graph $\rgraph$ contains isomorphic copies of all countable graphs and is homogeneous in the sense that every isomorphism between finite subgraphs of $\rgraph$ extends to an automorphism of $\rgraph$.
Actually, the existence, universality and uniqueness of the random graph follows from the classical (model-theoretic) \fra\ theory~\cite{Fraisse}.
A natural question is whether there exist other countable homogeneous graphs, besides $\rgraph$, the discrete one and the complete one.
Using \fra\ theory, it is relatively easy to see that for each $n>2$ there exists a homogeneous $K_n$-free graph $\Hen n$ (probably first found explicitly and studied by Henson~\cite{Henson}). 
Finally, Lachlan and Woodrow~\cite{LachlanWoodrow} showed that, modulo taking the opposite graphs and disjoint unions of complete graphs, there are no other countable homogeneous graphs.

\subsubsection{No compact graph can be $\omega$-saturated}

A graph $G$ on a topological space $X$ is called compact (closed, $F_\sigma$, etc.) if the edge relation of $G$ is a compact (closed, $F_\sigma$, etc.) subset of $X^2$. A graph $G$ on $X$ is clopen (open) if the edge relation of $G$ is a clopen (open) subset of $X^2$ without the diagonal.    

Recall that a graph $G$ is \emph{$\omega$-saturated} if it satisfies condition (R) above, namely, for every disjoint finite sets $A, B \subs G$ there exists $v \in G \setminus (A \cup B)$ adjacent to all elements of $A$ and to no element of $B$.
A graph $G$ is \emph{$n$-saturated} if the above holds for sets $A, B$ with $|A \cup B| < n$.

\begin{lm}\label{LMirbiwrgbw}
	Let $K$ be a graph endowed with a compact Hausdorff topology such that the edge relation has closed sections.
	Let $D$ be a dense subset of $K$ and suppose that for every finite set $S \subs D$ there is a vertex in $K$ adjacent to all elements of $S$.
	Then $K$ contains a vertex adjacent to all other vertices.
\end{lm}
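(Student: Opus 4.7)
The plan is to combine a compactness (finite intersection) argument with a density argument, both powered by the closedness of sections of the edge relation.

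First I would fix, for each vertex $v\in K$, the section $N(v)=\{x\in K\colon (v,x)\in E\}$, which by hypothesis is a closed subset of $K$. I would then observe that the family $\{N(d)\colon d\in D\}$ has the finite intersection property: given a finite set $S\subs D$, the standing assumption gives a vertex $w_S\in K$ adjacent to every element of $S$, and by symmetry of $E$ this means $w_S\in\bigcap_{d\in S}N(d)$.

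Next I would invoke compactness of $K$. Since the collection $\{N(d)\colon d\in D\}$ consists of closed sets with the finite intersection property, the full intersection is nonempty. Pick any $w\in\bigcap_{d\in D}N(d)$. By symmetry of $E$, this says $w$ is adjacent to every element of $D$, i.e.\ $D\subs N(w)$.

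Finally I would close the argument with density: the section $N(w)$ is again closed by hypothesis, and it contains the dense set $D$, so $N(w)=K$. Hence $w$ is adjacent to every vertex of $K$, as required.

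There is essentially no obstacle here once one unpacks the closed-sections hypothesis correctly: the only subtle point is the dual use of symmetry, first to pass from ``$w_S$ adjacent to every $d\in S$'' to ``$w_S\in N(d)$ for every $d\in S$'' (which is where the closedness of the $N(d)$ is used for compactness), and then back, to conclude $D\subs N(w)$ so that the density argument kicks in. Note that $K$ need not even be metrizable for this argument, and the reflexivity of $E$ plays no role beyond the usual convention that ``adjacent to every other vertex'' is read inclusively.
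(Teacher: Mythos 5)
Your proof is correct and follows essentially the same route as the paper's: a finite-intersection-property/compactness argument to produce a vertex adjacent to all of $D$, followed by density plus closedness of the section to upgrade to all of $K$. The only (harmless) difference is that you intersect the closed sections $N(d)$ directly, whereas the paper intersects closures of sets of witnesses $v_T$ and then needs a short neighborhood argument to see that the resulting point is adjacent to every element of $D$; your version is a mild streamlining of the same idea.
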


\begin{pf}
	Let $\Sigma = \fin D$.
	Given $S \in \Sigma$, choose $v_S \in K$ adjacent to all elements of $S$.
	Define
	$$F = \bigcap_{x \in D} \cl \setof{v_T}{T \in \Sigma,\; x \in T}.$$
	By compactness, $F \nnempty$.
	Choose $v \in F$ and suppose $v$ is not adjacent to some $u \in D$.
	Choose a neighborhood $V$ of $v$ such that $V \times \sn u$ is disjoint from the graph relation of $K$.
	There exists $T \in \Sigma$ such that $v_T \in V$ and $u \in T$.
	This gives a contradiction, showing that $v$ is adjacent to all elements of $D$.
	As $D$ is dense, $v$ is actually adjacent to each vertex of $K$.
\end{pf}


\begin{wn}\label{ThereAreNoOmegaSaturatedCompactGraphs}
	No compact graph is $\omega$-saturated. Moreover, no open graph on a compact space is $\omega$-saturated.
\end{wn}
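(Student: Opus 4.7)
The plan is to deduce both assertions from Lemma~\ref{LMirbiwrgbw} applied with $D$ equal to the entire vertex set, handling the open case by passing to the complementary graph.

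For the first assertion, let $G=(V,E)$ be a compact graph; I would work under the tacit assumption that $V$ is Hausdorff, which is the standing setting of Lemma~\ref{LMirbiwrgbw}. Under this assumption $E$ is a compact subset of the Hausdorff space $V^2$, hence closed; its sections are therefore closed, and the diagonal $\Delta\subseteq E$ (being closed in $V^2$ and lying inside the compact set $E$) is compact, so $V\cong\Delta$ is itself compact. Assuming that $G$ is $\omega$-saturated, condition (R) with $A=S$ and $B=\emptyset$ shows that every finite $S\subseteq V$ has a common neighbor in $V$. Lemma~\ref{LMirbiwrgbw} with $D=V$ then produces a vertex $v\in V$ adjacent to every vertex. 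But (R) with $A=\emptyset$ and $B=\{v\}$ yields some $u\in V\setminus\{v\}$ not adjacent to $v$, a contradiction.

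For the second assertion, I would reduce to the first by complementation. Given an open graph $G=(V,E)$ on a compact Hausdorff space $V$, define
\[
E':=V^2\setminus(E\setminus\Delta).
\]
Since $E\setminus\Delta$ is open, $E'$ is closed in $V^2$ and hence compact; it is symmetric (as $E\setminus\Delta$ is) and contains $\Delta$, so $G':=(V,E')$ is a compact graph in the sense of the paper. By construction, two distinct vertices are adjacent in $G'$ exactly when they are non-adjacent in $G$. Therefore, if $G$ is $\omega$-saturated, then applying (R) for $G$ with the roles of $A$ and $B$ swapped produces, for each pair of disjoint finite $A,B\subseteq V$, a vertex in $V\setminus(A\cup B)$ that in $G'$ is adjacent to every element of $A$ and to no element of $B$. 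Thus $G'$ is a compact $\omega$-saturated graph, contradicting the first assertion.

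The only delicate point is the hidden topological hypothesis: Lemma~\ref{LMirbiwrgbw} requires a compact Hausdorff ambient space with closed sections, while ``compact graph'' in the paper is a condition only on the edge relation. The verification that a compact edge relation on a Hausdorff space forces the vertex space to be compact (via the diagonal) and the edge relation to have closed sections is the main thing to get right; after that, both halves amount to bookkeeping, with the complementation in part two translating openness of $E\setminus\Delta$ into closedness (hence compactness) of the symmetric reflexive relation $E'$.
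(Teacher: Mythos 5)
Your proposal is correct and follows essentially the same route as the paper: apply Lemma~\ref{LMirbiwrgbw} with $D$ the whole vertex set to produce a universal vertex, contradict $2$-saturation, and handle the open case by passing to the complementary (dual) graph, which is compact. The only difference is that you spell out the tacit topological bookkeeping (deducing compactness of $V$ from compactness of $E$ via the diagonal, and closedness of sections), which the paper leaves implicit.
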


\begin{pf}
	Suppose $K$ is an $\omega$-saturated compact graph.
	By Lemma~\ref{LMirbiwrgbw}, there is $v \in K$ adjacent to all other vertices.
	But now $\sn v$ witnesses the fact that $K$ is not $2$-saturated, a contradiction.
	
	Note that the dual graph to an  $\omega$-saturated graph is  $\omega$-saturated as well, and the dual graph to an open graph is compact. So the ``moreover'' part of the assertion follows from the already proved one.  
\end{pf}

This shows that no profinite graph (countable or uncountable) shares the properties of the classical random graph. There is also no compact topology on the random graph with closed edge relation. Note that for any compact metrizable topology on the random graph, the edge relation is $G_\delta$ and $F_\sigma$, so this observation cannot be strengthened. Note also that the first author showed in \cite{Geschke} that there is an $\omega_1$-saturated graph on $2^\omega$ with an $F_\sigma$ edge relation.  


Corollary \ref{ThereAreNoOmegaSaturatedCompactGraphs} shows that there are no $\omega$-saturated graphs on $2^\omega$. The first author proved in \cite{Geschke} that there is a clopen graph on $2^\omega$ which is $3$-saturated, while no clopen graph on $2^\omega$ can have infinite subgraphs that are $4$-saturated. The following natural question arises: Is there a closed graph on $2^\omega$ that is $n$-saturated for some finite $n >3$? This question is answered positively in the forthcoming~\cite{GG}.

\section{The projectively universal homogeneous graph}\label{SecPGebrgihbe}

Replacing embeddings by quotient maps, the general \fra\ theory gives us a profinite (called \emph{projective} by Irwin and Solecki~\cite{IrwSol}) graph $\PG$ that is the most complicated inverse limit of finite (nonempty) graphs. This graph and its relatives have been already studied by Camerlo~\cite{Camerlo}. We shall recall some of the results from~\cite{Camerlo} with slightly more direct arguments. We also present an explicit model of $\PG$.

From the general \fra\ theory~\cite{DrGoe92, K_flims} we infer that there is an inverse sequence
\[
\begin{tikzcd}
	G_0&\arrow[l, "q_{0}"'] G_1&\arrow[l, "q_{1}"']G_2&\arrow[l,"q_{2}"']\cdots
\end{tikzcd}
\]  
which has the opposite property to that leading to the random graph: for any finite graph $G$ and any quotient map $\map gG{G_k}$ there is a quotient map $\map f{G_n}G$ for some $n>k$ such that the diagram 
\[
\begin{tikzcd}
	G_k&    &\arrow[ll, "q^n_{k}"']\arrow[dl,dashed ,"f"']G_n\\
	&\arrow[ul,"g"']G& 
\end{tikzcd}
\]
commutes. Such a sequence is called a \emph{\fra\ sequence}, see~\cite{K_flims}. 

It turns out that if profinite graphs $H$ and $H'$ are inverse limits of \fra\ sequences, then there is a topological graph isomorphism $\map fH{H'}$, namely, a graph isomorphism that is at the same time a homeomorphism.
The following property characterizes $\PG$.

\begin{tw}\label{UniversalProjectiveGraph}
	There is a unique second countable profinite graph $\PG$ satisfying the condition
	\begin{enumerate}
		\item[$(\star)$] Given continuous quotient maps $\map f\PG S$ and $\map qTS$, where $S,T$ are nonempty finite graphs, there exists a continuous quotient map $\map g\PG T$ such that the diagram 
		\[
		\begin{tikzcd}
			S&    &\arrow[ll, "f"']\arrow[dl,dashed ,"g"']\PG\\
			&\arrow[ul,"q"']T& 
		\end{tikzcd}
		\]
		commutes.
	\end{enumerate}
	Furthermore, given any nonempty second countable profinite graph $K$, there exists a quotient map from $\PG$ to $K$. 
\end{tw}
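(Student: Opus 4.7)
The plan is to take $\PG$ to be the inverse limit of a \fra\ sequence $\seq{G_n, q_k^n}$ in the category of nonempty finite graphs with quotient bonding maps, whose existence is asserted just before the theorem (the category has the amalgamation property and is dominated by a countable subcategory, so general \fra\ theory applies). The limit $\PG$ is then a second countable profinite graph, and the projections $\map{q_n^\infty}\PG{G_n}$ are continuous quotient maps: surjectivity follows from the non-emptiness of each $G_n$ and the surjectivity of the bonding maps, while strictness is obtained by a compactness argument on the inverse system of finite nonempty sets of edges of $G_m$ lying over a given edge of $G_n$.

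The main point is to verify $(\star)$. Given a continuous quotient $\map f\PG S$ into a finite discrete $S$, the fibers of $f$ are clopen, so by compactness $f$ factors through some projection: there exist $k$ and a map $\map h{G_k}S$ with $f = h \circ q_k^\infty$, and since $f$ is a quotient and $q_k^\infty$ is surjective, $h$ is a quotient as well. Form the pullback graph
\[
P = \{(a,b) \in G_k \times T : h(a) = q(b)\}
\]
with edges $(a,b)E(a',b')$ iff $aE_{G_k}a'$ and $bE_T b'$. Using strictness of $h$ and $q$, a direct check shows that the projections $\map{\pi_1}P{G_k}$ and $\map{\pi_2}PT$ are quotient maps. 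The \fra\ lifting property of the sequence, applied to $\pi_1$, then furnishes $n > k$ together with a quotient $\map\phi{G_n}P$ satisfying $\pi_1 \circ \phi = q_k^n$, and setting $g = \pi_2 \circ \phi \circ q_n^\infty$ yields a continuous quotient $\map g\PG T$ with $q \circ g = f$, since $q \circ \pi_2 = h \circ \pi_1$ by construction of $P$ and $\pi_1 \circ \phi \circ q_n^\infty = q_k^\infty$.

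Uniqueness of $\PG$ follows by a standard projective back-and-forth argument using $(\star)$ twice, in the usual way that characterizes \fra\ limits: property $(\star)$ allows one to absorb the inverse sequence defining any other candidate $\PG'$ into the sequence defining $\PG$ and symmetrically, giving compatible inverse sequences of isomorphisms whose limit is the desired topological graph isomorphism. Equivalently, one observes that $(\star)$ forces the system of finite continuous quotients of $\PG$ to be a \fra\ sequence in the category of finite graphs with quotients, and then appeals to the uniqueness of its \fra\ limit.

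For the final universality statement, write a given nonempty second countable profinite graph $K$ as $K = \varprojlim K_n$ with nonempty finite $K_n$ and quotient bonding maps $\map{r_n}{K_{n+1}}{K_n}$. Construct a compatible family of continuous quotients $\map{g_n}\PG{K_n}$ inductively: apply $(\star)$ with $S$ the one-vertex loop graph to obtain $g_0$, and, given $g_n$, apply $(\star)$ with $f = g_n$ and $q = r_n$ to obtain $g_{n+1}$ satisfying $r_n \circ g_{n+1} = g_n$. The induced map $\map g\PG K$ is continuous, and the usual compactness argument on the nonempty decreasing closed subsets of $\PG^2$ consisting of adjacent pairs sitting over a prescribed edge of $K$ shows that $g$ is strict and surjective onto $K$, hence a quotient. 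Throughout, the main technical care lies in tracking strictness across every composition and inverse limit, so that the notion of a quotient map is preserved at each step.
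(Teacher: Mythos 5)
Your proposal is correct and follows essentially the same route as the paper, which simply invokes general projective \fra\ theory for the category of nonempty finite graphs with quotient maps (citing Droste--G\"obel, Irwin--Solecki and \cite{K_flims}) rather than spelling out a proof. The details you supply --- factorization of a continuous finite quotient through a finite stage of the sequence, pullback amalgamation, the back-and-forth argument for uniqueness, and the compactness argument giving surjectivity and strictness of the limit map --- are exactly the standard ingredients that the cited references provide.
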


Yet another ``furthermore'' part in the theorem above is: For every two continuous graph quotient maps $\map {p_i}\PG {F_i}$ ($i=0,1$) onto finite graphs, every isomorphism $\map h {F_0}{F_1}$ lifts to an automorphism $\map {\tilde h} \PG \PG$ in the sense that $h \cmp p_0 = p_1 \cmp {\tilde{h}}$. This is called \emph{projective homogeneity} and follows from $(\star)$ by a standard back-and-forth argument (again,~\cite{K_flims} contains details in the general setting of arbitrary categories).

We now discuss some properties of the graph $\PG$ that are relevant to our study.

\begin{prop}[Camerlo~\cite{Camerlo}]\label{PropCamerlo}
	Graph $\PG$ does not contain paths of length greater than one.
\end{prop}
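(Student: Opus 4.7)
The plan is to derive the statement from property $(\star)$ together with the observation recorded in Section~\ref{SectionPrelim} that every nonempty finite graph is a quotient of a \emph{fragmented} finite graph, i.e.\ one each of whose connected components has at most two vertices. The idea is to pull an alleged path of length two in $\PG$ back through a suitable quotient onto a fragmented finite graph, where such a path cannot exist.

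I would start by assuming, for contradiction, that there exist three pairwise distinct vertices $a, b, c \in \PG$ with $a\,E\,b$ and $b\,E\,c$. Writing $\PG = \varprojlim_n G_n$ with continuous quotient projections $\pi_n \colon \PG \to G_n$, distinct points of an inverse limit differ in all sufficiently large coordinates, so for some $n$ the three points $\pi_n(a), \pi_n(b), \pi_n(c)$ are pairwise distinct. Set $f := \pi_n$ and $F := G_n$. Using the fragmentation observation, I would then pick a fragmented finite graph $T$ together with a quotient map $q \colon T \to F$. Property $(\star)$ supplies a continuous quotient $g \colon \PG \to T$ satisfying $q \circ g = f$. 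Since $q(g(x)) = f(x)$ for $x \in \{a, b, c\}$ and the three values $f(a), f(b), f(c)$ are distinct, the vertices $g(a), g(b), g(c)$ are also pairwise distinct, and, $g$ being a homomorphism, they inherit the edges $g(a)\,E\,g(b)$ and $g(b)\,E\,g(c)$. This realises a path of length two on three distinct vertices inside $T$, contradicting the fact that no connected component of $T$ contains more than two vertices.

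The only substantive ingredient is the fragmentation observation, but this is already established by the explicit example given near the beginning of Section~\ref{SectionPrelim}, where every finite graph is realised as a quotient of a disjoint union of edges and isolated loops. The rest is a routine use of $(\star)$ together with the basic feature of inverse limits that distinct points separate in every sufficiently late coordinate, so no serious obstacle is expected.
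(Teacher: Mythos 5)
Your argument is correct. It shares the paper's overall skeleton --- assume a path $a\,E\,b\,E\,c$ on three distinct vertices, find a finite quotient $f$ of $\PG$ separating them, exhibit a further finite quotient $\map qTF$ through which no lift of $f$ can carry the path, and invoke $(\star)$ --- but the auxiliary graph $T$ is chosen quite differently. The paper builds $F$ by hand as a three-vertex quotient coming from a clopen partition separating $a,b,c$, splits only the middle vertex into two non-adjacent copies, and derives the contradiction from a small case analysis on which copy receives $b$. You instead take $T$ to be the fragmented cover of $F$ (every component of size at most two), so the contradiction is immediate: $g(a),g(b),g(c)$ are three distinct vertices forced into a single component. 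This buys uniformity --- no case analysis, and it makes transparent that the underlying reason is that $\PG$ is projectively dominated by fragmented graphs, using the observation from the preliminaries that the paper sets up but never actually exploits here. The one step you pass over silently is that the coordinate projections $\pi_n$ of the inverse limit are themselves strict surjections, so that $(\star)$ really applies with $f=\pi_n$; this holds by the standard compactness argument (an edge of $G_n$ lifts to an edge of the limit because its lifts form a nonempty inverse system of finite sets), whereas the paper avoids the issue by defining the edge relation on its quotient $S$ precisely so that its map is strict. That deserves a sentence, but it is not a gap.
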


\begin{pf}
	Suppose $a,b,c \in \PG$ is a path of length two, i.e., $\dn a b$ and $\dn b c$ are edges in $\PG$ and $a,b,c$ are pairwise distinct.
	Choose a decomposition of $\PG$ into three clopen sets $U_a, U_b$ and $U_c$, separating $\{a,b,c\}$. Let $S=\{U_a,U_b,U_c\}$. We define a graph relation $E_S$ on $S$ as follows: $UE_SU'$ if and only if there are $x\in U$ and $y\in U'$ such that $\{x,y\}$ is an edge in $\PG$. Define $\map f\PG S$ by $f(x)=U$ provided $x\in U$. Clearly $f$ is a quotient mapping. 
	
	Let $T$ be the graph obtained from $S$ by splitting the edge $U_b$ into two vertices $v_0, v_1$. Formally,
	$$T = (S \setminus \sn{U_b}) \cup \dn {v_0} {v_1}.$$
	The edges in graph $T$ are drawn in the following picture.
	\begin{center}
		\begin{tikzpicture}[every node/.style={circle,inner sep=2pt}]
			\node[fill=black] (b) at (1,1) {};
			\node[fill=black] (a) at (0,0) {};
			\node[fill=black] (c) at (2,0) {};
			\draw (b) -- (a);
			\draw (b) -- (c);
			\draw[gray, dashed] (a) -- (c);
			
			\node at (-0.5,0) {$U_a$};
			\node at (2.5,0) {$U_c$};
			\node at (1,1.5) {$U_b$};
			\node at (1 , 0.2) {$?$};
			\node at (1,-1) {$S$};

			\node[fill=black] (v0) at (6,1) {};
			\node[fill=black] (v1) at (7,1) {};
			\node[fill=black] (a1) at (5,0) {};
			\node[fill=black] (c1) at (8,0) {};
			\draw (v0) -- (a1);
			\draw (v1) -- (c1);
			\draw[gray, dashed] (a1) -- (c1);
			
			\node at (4.5,0) {$U_a$};
			\node at (8.5,0) {$U_c$};
			\node at (6,1.5) {$v_0$};
			\node at (7,1.5) {$v_1$};  
			\node at (6.5 , 0.2) {$?$};
			\node at (6.5,-1) {$T$};
			
		\end{tikzpicture}
	\end{center}
	The dashed edge with question mark in graph $S$ means that there may be or not an edge between $U_a$ and $U_c$. The dashed edge in graph $T$ means that there is an edge between $U_a$ and $U_c$ provided it was in $S$. Moreover, there are no more edges in $T$ than the drawn ones. 
	
	Let $\map g T S$ be the canonical surjection, i.e. $g$ is the identity on $S \setminus \sn{U_b}$ and $g(v_i) = U_b$ for $i=0,1$. From the shape of $T$ we immediately obtain that $g$ a quotient mapping of graphs.
	
	By Theorem \ref{UniversalProjectiveGraph}, we can find a continuous quotient map $\map q \PG T$ such that $g \cmp q = f$. Suppose $q(b) = v_0$ (the other case can be handled in the same way). Then $g(q(b)) = f(b)$ and also $g(q(c)) = f(c)$.
	On the other hand, $q$ is a graph homomorphism and $b$, $c$ are adjacent in $\PG$, therefore the same must hold for $q(b)=v_0$ and $q(c)=f(c)$ in $T$.
	This is a contradiction.
\end{pf}

We now see that the edge relation in $\PG$ is actually an equivalence relation, whose equivalence classes contain at most two elements. Irwin and Solecki~\cite{IrwSol} constructed the projective \fra\ limit of finite linear graphs; it turned out that the graph relation on the limit, as in our case, is an equivalence relation, whose equivalence classes contain at most two elements. Then they showed that its quotient with respect to the edge relation is a chainable hereditarily indecomposable continuum. Inspired by \cite{IrwSol}, Barto\v{s}ov\'{a} and Kwiatkowska~\cite{BarKwiat} constructed the projective \fra\ limit of a special kind of finite trees whose graph relation was an equivalence relation; its quotient with respect to the edge relation was shown to be the so-called \emph{Lelek fan}---a rather intriguing planar continuum.

Unfortunately, the quotient of $\PG$ with respect to its edge relation is again homeomorphic to the Cantor set---already proved by Camerlo~\cite{Camerlo}. We provide the arguments just for the sake of completeness and in order to make this note more self-contained. Some of these arguments will also be used later.
It is worth noting that Camerlo~\cite{Camerlo} provided a full topological characterization of all possible quotients $X/R$ of inverse \fra\ limits $(X,R)$, where $R$ is an equivalence relation.   

Note that $\PG$ is homeomorphic to the Cantor set. Indeed, condition $(\star)$ says that splitting every point into two points is realized in $\PG$, therefore $\PG$ has no isolated points.


\begin{prop}[{Camerlo~\cite{Camerlo}}]\label{TheoremCamerlo}
	The edge relation $E$ on $\PG$ is a closed equivalence relation on $2^{\omega}$.
	The quotient $2^{\omega}/E$ is homeomorphic to $2^{\omega}$.
\end{prop}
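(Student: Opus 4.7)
My plan is to show that $\PG/E$ is homeomorphic to $2^\omega$ by verifying Brouwer's characterization: nonempty, compact, metrizable, perfect and zero-dimensional. Compactness and nonemptiness are immediate from the same properties of $\PG$, so three conditions remain.

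The first block of work is to check that $E$ is a closed equivalence relation whose classes have at most two elements. Reflexivity and symmetry are built into our convention, and transitivity follows from Proposition~\ref{PropCamerlo}, because a failure would supply three pairwise distinct vertices $a,b,c$ with $a E b$ and $b E c$, i.e.\ a path of length two, which is forbidden. The same observation bounds the size of each class by two. Closedness of $E$ is part of the definition of a profinite graph. The decomposition of $\PG$ by $E$ therefore has finite fibers and is upper semicontinuous, so $\PG/E$ is compact, Hausdorff and second countable, hence metrizable. Perfectness is also easy: an isolated class $[a]$ in $\PG/E$ would correspond to a clopen subset of $\PG$ of cardinality at most $2$, contradicting $\PG \iso 2^\omega$.

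The heart of the proof is zero-dimensionality. Given two distinct classes $[a] \ne [b]$, I need to produce an $E$-saturated clopen subset of $\PG$ containing $[a]$ and missing $[b]$. The naive candidate $U^{*} = \setof{x}{[x] \subs U}$, for a clopen $U \supseteq [a]$ disjoint from $[b]$, is open and saturated by upper semicontinuity, but I see no direct argument that it is closed: a sequence of two-point classes straddling $U$ could accumulate at a vertex whose (trivial) class lies inside $U$. To bypass this I would lift a separation through a carefully chosen finite quotient using the universal property $(\star)$. First, pick a continuous quotient $\pi \colon \PG \to G_n$ onto a graph from the defining inverse sequence fine enough to separate the points of $[a] \cup [b]$ and, moreover, to leave no edge of $G_n$ between $\pi([a])$ and $\pi([b])$; both conditions can be arranged for large $n$ because $[a] \cup [b]$ is finite and $E$ is closed, so each non-edge $(y,z) \in [a] \times [b]$ is covered by a basic open rectangle disjoint from $E$. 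Next, construct a finite auxiliary graph $T$ whose own edge relation is already an equivalence relation with classes of size at most two: for every edge $\dn u v$ of $G_n$ with $u \ne v$, introduce fresh vertices $p_{uv}, p_{vu}$ joined by an edge, and for every $u \in G_n$ introduce a loop vertex $p_{uu}$. The projection $r \colon T \to G_n$ defined by $p_{uv} \mapsto u$ is a graph quotient, so $(\star)$ yields a continuous quotient $g \colon \PG \to T$ with $r \cmp g = \pi$. Direct inspection shows that $r$ maps the $T$-class of $g(a)$ into the union of $\pi([a])$ with at most one $G_n$-neighbor of a vertex of $\pi([a])$; by the choice of $\pi$, this union is disjoint from $\pi([b])$, so the $T$-class of $g(a)$ is disjoint from $g([b])$. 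Hence $V := g^{-1}([g(a)]_T)$ is an $E$-saturated clopen set containing $[a]$ and disjoint from $[b]$, and its image in $\PG/E$ is the desired clopen separator.

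Combining everything, $\PG/E$ is a nonempty compact metrizable perfect zero-dimensional space, and Brouwer's theorem identifies it with $2^\omega$.
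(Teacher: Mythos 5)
Your proposal is correct, and its skeleton is the same as the paper's: reduce to Brouwer's characterization of $2^\omega$, get transitivity and the two-element bound on classes from Proposition~\ref{PropCamerlo}, deduce perfectness of the quotient from that bound plus the fact that $\PG$ has no isolated points, and prove zero-dimensionality by lifting a separation through a finite quotient via property $(\star)$ of Theorem~\ref{UniversalProjectiveGraph}. The only genuine difference is the auxiliary finite graph used in the zero-dimensionality step. The paper takes a three-element quotient $\{u,v,w\}$ (with $U,V$ clopen neighborhoods of the two given vertices having no cross-edges and $w$ representing the rest), splits $w$ into $w_u,w_v$, and pulls back $\{u,w_u\}$ and $\{v,w_v\}$ to obtain a clopen partition of $\PG$ with no edges across it, whose saturations are then automatically clopen in the quotient. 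You instead pass to a finer quotient $G_n$ separating $[a]\cup[b]$ and killing all cross-edges, double every edge of $G_n$ to get a graph $T$ whose edge relation is already an equivalence relation with classes of size $\le 2$, and pull back a single $T$-class to get a saturated clopen set; your explicit remark that the naive saturation of a clopen set need not be closed, and your workaround, are exactly the issue the paper's splitting trick is designed to evade. Your gadget is slightly heavier but gives the saturated clopen separator in one stroke; the paper's is more economical. One small point worth making explicit: you apply $(\star)$ to the canonical projection $\pi\colon\PG\to G_n$, which requires knowing that this projection is a strict (quotient) homomorphism; if you prefer not to invoke that, you can replace $G_n$ by the finite graph on the partition $\{\pi^{-1}(u)\colon u\in G_n\}$ with edges declared exactly when an edge of $\PG$ runs between the pieces, as the paper does --- this map is strict by construction and only removes edges, which strengthens your disjointness argument.
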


\begin{proof}
	We show that any two elements of $2^{\omega}/E$ can be separated by clopen sets.
	Let $x,y\in 2^{\omega}$ be two vertices of $\PG$ that are not equivalent, i.e., assume that $x$ and $y$ are distinct and not connected by an edge.
	Since the edge-relation of $\PG$ is closed, there are disjoint clopen neighborhoods $U$ and $V$ of $x$ and $y$, respectively, such that no edge of $\PG$ runs from $U$ to $V$.  
	Shrinking $U$ and $V$ if necessary, we may assume that $W=2^{\omega}\setminus(U\cup V)$ is nonempty.
	
	Let $A$ be a graph with three vertices $u$, $v$, and $w$ such that $\map f\PG A$ mapping all of $U$ to $u$, all of $V$ to $v$ and all of $W$ to $w$, is a strict homomorphism.
	By the choice of $U$ and $V$, $A$ has no edge from $u$ to $v$. 
	Let $B$ be obtained from $A$ by splitting $w$ into two vertices $w_u$ and $w_v$.
	The edges of $B$ are defined as follows:  there is an edge from $w_u$ to $u$ iff in $A$ there is an edge from $w$ to $u$;
	there is an edge from $w_v$ to $v$ iff in $A$ there is an edge from $w$ to $v$.
	$B$ has no further edge.
	Let $\map gBA$ be the strict homomorphism mapping $u$ to $u$, $v$ to $v$ and both $w_u$ and $w_v$ to $w$.
	
	By the universal property of $\PG$, there is a continuous strict homomorphism $\map h\PG B$ such that 
	$f=g\circ h$.
	Now $U'=h^{-1}[\{u,w_u\}]$ and $V'=h^{-1}[\{v,w_v\}]$ are disjoint clopen subsets of $2^{\omega}$ such that no edge of 
	$\PG$ runs from $U'$ to $V'$.  I.e., the sets $U'/E$ and $V'/E$ are disjoint.
	Moreover, $2^{\omega}=(U'/E)\cup(V'/E)$.  It follows that $U'/E$ and $V'/E$ are disjoint clopen neighborhoods of 
	the equivalence classes $[x]_E$ and $[y]_E$.
	
	This shows that $2^{\omega}/E$ is zero-dimensional.  
	It remains to show that it has no isolated points.  
	But an equivalence class $[x]_E$ is an isolated point in the quotient $2^{\omega}/\sim$ only if it is open as a subset of $2^{\omega}$.  Since no equivalence class has more than two elements, this never happens.
\end{proof}

The next result shows that all isolated vertices, namely, vertices inducing one-element equivalence classes with respect to $E$, form a topologically large set in $\PG$. 

\begin{tw}\label{THMhuwvwe}
	Graph $\PG$ has a dense $G_\delta$ set of isolated vertices.
\end{tw}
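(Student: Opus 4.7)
The plan is to verify that $\PG$ satisfies condition (IV) of Lemma~\ref{THMKriterionIsoVerticess}; the conclusion then follows immediately from that lemma. So given a nonempty open $U \subs \PG$, I need to exhibit a nonempty open $W \subs U$ such that no edge joins $W$ to $\PG \setminus U$.

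First I would pass to a nonempty clopen $U' \subs U$ using zero-dimensionality of $\PG$. Since $\PG$ is the inverse limit of a \fra\ sequence of finite graphs with continuous quotient bonding maps, every clopen subset of $\PG$ is pulled back from some finite stage; that is, $U' = f^{-1}(A)$ for some continuous quotient map $\map f \PG S$ onto a finite graph $S$ and some $A \subs S$. We may assume $A \subsetneq S$, as otherwise $U = \PG$ and $W = \PG$ works trivially.

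Next I would build an auxiliary finite graph $T$ designed to ``peel off'' an edge-free copy of some vertex of $A$. Concretely, pick $v_0 \in A$, let $T$ be obtained from $S$ by adjoining one fresh vertex $v_0^*$ whose only edge in $T$ is its own loop, and let $\map q T S$ be the identity on $S$ with $q(v_0^*) = v_0$. Then $q$ is a strict surjective graph homomorphism: every edge of $S$ lifts to the same edge in $T \sups S$, and the loop on $v_0^*$ maps to the loop on $v_0$. Applying property $(\star)$ of Theorem~\ref{UniversalProjectiveGraph} to $f$ and $q$, I obtain a continuous quotient map $\map g \PG T$ with $q \cmp g = f$, and I set $W = g^{-1}(\sn{v_0^*})$. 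Surjectivity of $g$ gives $W \nnempty$, and continuity gives that $W$ is clopen.

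It remains to verify that $W$ has the required separation. For $x \in W$, $f(x) = q(g(x)) = v_0 \in A$, so $W \subs f^{-1}(A) = U' \subs U$. For any $y \in \PG \setminus U$ we have $y \notin U'$, hence $q(g(y)) = f(y) \notin A$, forcing $g(y) \in T \setminus (A \cup \sn{v_0^*}) = S \setminus A$. Since the only edge incident to $v_0^*$ in $T$ is its own loop, $v_0^*$ and $g(y)$ are not adjacent in $T$, and as $g$ is a graph homomorphism, $x$ and $y$ are not adjacent in $\PG$. This confirms condition (IV) and Lemma~\ref{THMKriterionIsoVerticess} completes the proof. The only nontrivial point is the (standard) observation that clopen subsets of $\PG$ factor through finite quotients of the defining \fra\ sequence; everything else is a direct application of the universal property $(\star)$.
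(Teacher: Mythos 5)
Your proposal is correct and follows essentially the same route as the paper's proof: both verify condition (IV) of Lemma~\ref{THMKriterionIsoVerticess} by taking a finite quotient through which the given clopen set factors, adjoining an isolated vertex over a vertex inside its image, applying property $(\star)$ of Theorem~\ref{UniversalProjectiveGraph}, and taking the preimage of the new isolated vertex as $W$. The only cosmetic difference is that the paper collapses directly to a two-element quotient $\{a_0,a_1\}$ with $U=f^{-1}(a_0)$, while you work with an arbitrary finite stage $S$ and a subset $A\subs S$; the argument is otherwise identical.
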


\begin{proof}
	It suffices to show that condition (IV) of Lemma~\ref{THMKriterionIsoVerticess} is satisfied. So fix a nonempty clopen set $U \subs \PG$. Let $\map f \PG A$ be a quotient homomorphism onto a two-element graph, such that $U = f^{-1}(a_0)$, where $A = \dn{a_0}{a_1}$. Let $B$ be the graph obtained from $A$ by adding an isolated vertex $b$ and let $\map qBA$ be a quotient homomorphism mapping $b$ to $a_0$, identity on $A$. Condition ($\star$) of Theorem~\ref{UniversalProjectiveGraph} gives a quotient map $\map g \PG B$ such that $q \cmp g = f$. Let $W = g^{-1}(b)$. Then $W \subs U$ is a nonempty clopen set.
	
	Suppose there are adjacent vertices $w,v$ with $w \in W$ and $v \in \PG \setminus W$.
	Then $b = g(w)$ is adjacent to $g(v) \in \dn{a_0}{a_1}$, a contradiction. This shows that there are no adjacencies between $W$ and $\PG \setminus W \sups \PG \setminus U$. Thus (IV) holds true.
\end{proof}

We now see that constructing a ``nice" universal compact graph by using all quotient maps is not the best idea.
We have proposed to refine our tools in order to obtain a better universal compact graph, namely, using retractions.
On the other hand, we believe that the graph $\PG$ still deserves some attention, in particular, the topology of its edge relation is far from being trivial. Below we present its concrete model.

\subsection{A concrete model of $\PG$}
\label{SectionModelQuotient}

Below we present a model of the projectively universal profinite graph $\PG$. Consider the following graph. Let $G=(\{0,1,2,3\}^\omega,E)$ be a graph where $xEx$ for every $x\in G$, and for $x\neq y$ we put $xEy$ whenever
$$
\Big(\exists n\Bigl(\forall k<n(x(k)=y(k))\wedge (\{x(n),y(n)\}=\{2,3\})\wedge\forall k>n (x(k)=y(k)\in\{0,1\})\Bigr)
$$
$$
\text{ or }\{x(0),y(0)\}=\{0,1\}\wedge\forall k\geq 1 (x(k)=y(k)\in\{0,1\})\Big).
$$
We will show that $G$ is the inverse limit of a \fra\ sequence of finite graphs with quotients. 

Let us construct the sequence $(G_n)$ of finite graphs. Let $G_n=\{0,1,2,3\}^n$ and let $E_n$ be a relation on $G_n$ such that $xE_nx$ for every $x\in G_n$, and for $x\neq y$, we put $xE_ny$ whenever
$$
\Big(\exists l<n\Big(\forall k<l(x(k)=y(k))\wedge \{x(l),y(l)\}=\{2,3\}\wedge\forall k>l (x(k)=y(k)\in\{0,1\})\Big)
$$
$$
\text{ or }\{x(0),y(0)\}=\{0,1\}\wedge\forall k\geq 1 (x(k)=y(k)\in\{0,1\})\Big).
$$
We define $\map {p_{n}}{G_{n+1}}{G_{n}}$ by $p_{n}(x)=x\vert_{n}$, where $x\vert_{n}$ is just a short way of writing $(x(0),x(1),\dots,x(n-1))$. Let us observe the following.

\begin{lm}
	$p_{n}$ is a quotient map.
\end{lm}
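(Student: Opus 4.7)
The plan is to verify the three requirements defining a quotient map in the sense of the paper: surjectivity, that $p_n$ is a graph homomorphism, and strictness. Surjectivity is immediate: for any $x\in G_n = \{0,1,2,3\}^n$, the concatenation $x\concat 0 \in G_{n+1}$ clearly satisfies $p_n(x\concat 0)=x$. So the actual content lies in the last two properties, and both reduce to a case analysis following the three clauses in the definition of $E_{n+1}$ and $E_n$.

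For the homomorphism property, I would fix $x,y \in G_{n+1}$ with $x E_{n+1} y$ and show $x|_n \mathrel{E_n} y|_n$. If $x = y$ there is nothing to prove since $E_n$ is reflexive. If $x \neq y$ and the first clause applies with splitting index $l < n+1$, I split into two subcases: $l<n$ (in which case all the defining conditions for the first clause of $E_n$ survive unchanged under restriction), and $l = n$ (in which case $x|_n = y|_n$ and reflexivity finishes the job). If instead the second clause applies, i.e.\ $\{x(0),y(0)\} = \{0,1\}$ and $x(k)=y(k)\in\{0,1\}$ for all $1 \le k \le n$, then restricting to the first $n$ coordinates preserves exactly this condition (now up to $k < n$), giving $x|_n \mathrel{E_n} y|_n$.

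For strictness, I would take adjacent $a,b \in G_n$ and exhibit a preimage edge. The natural candidate is $x := a\concat 0$, $y := b\concat 0$. If $a = b$ then $x = y$ and reflexivity gives $x E_{n+1} y$. If $a \neq b$ with splitting index $l < n$ and $\{a(l),b(l)\} = \{2,3\}$, the tail condition in the first clause of $E_{n+1}$ needs $x(k) = y(k) \in \{0,1\}$ for $l < k \le n$; this holds for $l < k < n$ by hypothesis on $a,b$ and for $k = n$ because $x(n) = y(n) = 0 \in \{0,1\}$. The second clause is handled analogously: appending $0$ preserves the requirement that $x(k) = y(k) \in \{0,1\}$ for $k \ge 1$.

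No step is genuinely hard; the only subtlety, and the place where I would be most careful, is the boundary case $l = n$ in the homomorphism check, where the splitting coordinate lies \emph{beyond} the range of the restriction and so must be absorbed by reflexivity rather than by the first clause of $E_n$. Once this case is acknowledged, everything else is a direct verification that appending a $0$ (or dropping the last coordinate) respects each of the two adjacency patterns governing $E_n$ and $E_{n+1}$.
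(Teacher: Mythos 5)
Your proposal is correct and follows essentially the same route as the paper: the same case split on the two adjacency clauses (with your boundary case $l=n$ being exactly the paper's first case, resolved by reflexivity after $x|_n=y|_n$), and the same witnesses $a\concat 0$, $b\concat 0$ for strictness. You are slightly more explicit than the paper in checking surjectivity and in verifying that the appended coordinate $0$ satisfies the tail condition, but the argument is the same.
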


\begin{proof}
	Firstly, we prove that $p_{n}$ is a graph homomorphism. Assume that $x E_{n+1} y$ and $x\neq y$. There are three cases we need to consider.\\
	1. $x(n)=2$ and $y(n)=3$ and $x(k)=y(k)$ for $k<n$. Then $x\vert_{n}=y\vert_{n}$, and therefore $p_{n}(x)=p_{n}(y)$ and $p_{n}(x)E_np_{n}(y)$.\\
	2. There is $l<n$ with $x(l)=2$, $y(l)=3$ and $x(k)=y(k)$ for $k<l$, and $x(k)=y(k)\in\{0,1\}$ for $k>l$. Then $p_{n}(x)\neq p_{n}(y)$ and $p_{n}(x)E_{n}p_{n}(y)$.\\
	3. $\{x(0),y(0)\}=\{0,1\}$ and $(x(k)=y(k)\in\{0,1\})$ for $k>0$. Then as before we obtain $p_{n}(x)\neq p_{n}(y)$ and $p_{n}(x)E_{n}p_{n}(y)$ immediately from the definition of $E_n$.
	
	For $a=(a(0),a(1),\dots,a(n-1))$ we define the concatenation $a\concat0$ of $a$ and 0 as $(a(0),a(1),\dots,a(n-1),0)$. Let $a,b$ be adjacent in $G_{n}$. Note that $a\concat0E_{n+1}b\concat0$, $p_{n}(a\concat0)=a$ and $p_{n}(b\concat0)=b$. This shows that $p_{n}$ is a strict graph homomorphism. Since $p_{n}$ is onto $G_{n}$, it is a quotient mapping. 
\end{proof}

If $n>k$ then $p^n_{k}=p_{k}\circ p_{k+1}\circ\dots\circ p_{n-1}$ is the restriction to the first $k$ coordinates. Clearly $p^n_{k}\circ p_n=p_k$. The following picture shows what the graphs $G_1$ and $G_2$ look like.\\ 
\vspace{1cm}
\begin{tikzpicture}[scale = 0.95, every node/.style={circle,inner sep=2pt}]
	\node[fill=black] (A) at (0,0) {};
	\node[fill=red] (B) at (4,0) {};
	\node[fill=brown] (C) at (8,0) {};
	\node[fill=blue] (D) at (12,0) {};
	
	\draw (A) -- (B);
	\draw (C) -- (D);
	
	\node at (0,0.5) {$0$};
	\node at (4,0.5) {$1$};
	\node at (8,0.5) {$2$};
	\node at (12,0.5) {$3$};
	
\end{tikzpicture}
\newline 
\begin{tikzpicture}[scale = 0.95, every node/.style={circle,inner sep=1.5pt}]
	\node[fill=black] (A1) at (0,0) {};
	\node[fill=black] (A2) at (1,0) {};
	\node[fill=black] (A3) at (2,0) {};
	\node[fill=black] (A4) at (3,0) {};
	\node[fill=red] (B1) at (4,0) {};
	\node[fill=red] (B2) at (5,0) {};
	\node[fill=red] (B3) at (6,0) {};
	\node[fill=red] (B4) at (7,0) {};
	\node[fill=brown] (C1) at (8,0) {};
	\node[fill=brown] (C2) at (9,0) {};
	\node[fill=brown] (C3) at (10,0) {};
	\node[fill=brown] (C4) at (11,0) {};
	\node[fill=blue] (D1) at (12,0) {};
	\node[fill=blue] (D2) at (13,0) {};
	\node[fill=blue] (D3) at (14,0) {};
	\node[fill=blue] (D4) at (15,0) {};
	
	\draw (A1) arc [radius=4, start angle=120, end angle= 60] ;
	\draw (A2) arc [radius=4, start angle=-120, end angle=-60] ;  
	\draw (A3) -- (A4);
	\draw (B3) -- (B4);
	\draw (C3) -- (C4);
	\draw (D3) -- (D4);
	\draw (C1) arc [radius=4, start angle=120, end angle= 60] ;
	\draw (C2) arc [radius=4, start angle=-120, end angle=-60] ;  
	
	\node at (0,1) {$00$};
	\node at (1,1) {$01$};
	\node at (2,1) {$02$};
	\node at (3,1) {$03$};
	\node at (4,1) {$10$};
	\node at (5,1) {$11$};
	\node at (6,1) {$12$};
	\node at (7,1) {$13$};
	\node at (8,1) {$20$};
	\node at (9,1) {$21$};
	\node at (10,1) {$22$};
	\node at (11,1) {$23$};
	\node at (12,1) {$30$};
	\node at (13,1) {$31$};
	\node at (14,1) {$32$};
	\node at (15,1) {$33$};
\end{tikzpicture}

The quotient map $p_{2,1}$ maps the vertices of one color in $G_2$ to the one vertex of that color in $G_1$. Below we present $G_3$  which gives a flavor  of our construction. \\
\begin{tikzpicture}[scale = 0.95, every node/.style={circle,inner sep=1pt,fill=black}]
	\node (A11) at (0,0) {};
	\node (A21) at (0.25,0) {};
	\node (A31) at (0.5,0) {};
	\node (A41) at (0.75,0) {};
	\node (A12) at (1,0) {};
	\node (A22) at (1.25,0) {};
	\node (A32) at (1.5,0) {};
	\node (A42) at (1.75,0) {};
	\node (A13) at (2,0) {};
	\node (A23) at (2.25,0) {};
	\node (A33) at (2.5,0) {};
	\node (A43) at (2.75,0) {};
	\node (A14) at (3,0) {};
	\node (A24) at (3.25,0) {};
	\node (A34) at (3.5,0) {};
	\node (A44) at (3.75,0) {};
	
	\node (B11)[color=red] at (4,0) {};
	\node (B21)[color=red] at (4.25,0) {};
	\node (B31)[color=red] at (4.5,0) {};
	\node (B41)[color=red] at (4.75,0) {};
	\node (B12)[color=red] at (5,0) {};
	\node (B22)[color=red] at (5.25,0) {};
	\node (B32)[color=red] at (5.5,0) {};
	\node (B42)[color=red] at (5.75,0) {};
	\node (B13)[color=red] at (6,0) {};
	\node (B23)[color=red] at (6.25,0) {};
	\node (B33)[color=red] at (6.5,0) {};
	\node (B43)[color=red] at (6.75,0) {};
	\node (B14)[color=red] at (7,0) {};
	\node (B24)[color=red] at (7.25,0) {};
	\node (B34)[color=red] at (7.5,0) {};
	\node (B44)[color=red] at (7.75,0) {};
	
	\node (C11)[color=brown] at (8,0) {};
	\node (C21)[color=brown] at (8.25,0) {};
	\node (C31)[color=brown] at (8.5,0) {};
	\node (C41)[color=brown] at (8.75,0) {};
	\node (C12)[color=brown] at (9,0) {};
	\node (C22)[color=brown] at (9.25,0) {};
	\node (C32)[color=brown] at (9.5,0) {};
	\node (C42)[color=brown] at (9.75,0) {};
	\node (C13)[color=brown] at (10,0) {};
	\node (C23)[color=brown] at (10.25,0) {};
	\node (C33)[color=brown] at (10.5,0) {};
	\node (C43)[color=brown] at (10.75,0) {};
	\node (C14)[color=brown] at (11,0) {};
	\node (C24)[color=brown] at (11.25,0) {};
	\node (C34)[color=brown] at (11.5,0) {};
	\node (C44)[color=brown] at (11.75,0) {};
	
	\node (D11)[color=blue] at (12,0) {};
	\node (D21)[color=blue] at (12.25,0) {};
	\node (D31)[color=blue] at (12.5,0) {};
	\node (D41)[color=blue] at (12.75,0) {};
	\node (D12)[color=blue] at (13,0) {};
	\node (D22)[color=blue] at (13.25,0) {};
	\node (D32)[color=blue] at (13.5,0) {};
	\node (D42)[color=blue] at (13.75,0) {};
	\node (D13)[color=blue] at (14,0) {};
	\node (D23)[color=blue] at (14.25,0) {};
	\node (D33)[color=blue] at (14.5,0) {};
	\node (D43)[color=blue] at (14.75,0) {};
	\node (D14)[color=blue] at (15,0) {};
	\node (D24)[color=blue] at (15.25,0) {};
	\node (D34)[color=blue] at (15.5,0) {};
	\node (D44)[color=blue] at (15.75,0) {};
	
	\draw (A11) arc [radius=4, start angle=120, end angle= 60] ;  
	\draw (A21) arc [radius=4, start angle=120, end angle=60] ;  
	\draw (A12) arc [radius=4, start angle=-120, end angle= -60] ;  
	\draw (A22) arc [radius=4, start angle=-120, end angle=-60] ;  
	\draw (A31) -- (A41);
	\draw (A32) -- (A42);
	\draw (A33) -- (A43);
	\draw (A34) -- (A44);
	\draw (A13) arc [radius=1, start angle=120, end angle=60] ;
	\draw (A23) arc [radius=1, start angle=-120, end angle=-60] ;
	
	\draw (B31) -- (B41);
	\draw (B32) -- (B42);
	\draw (B33) -- (B43);
	\draw (B34) -- (B44);
	\draw (B13) arc [radius=1, start angle=120, end angle= 60] ;
	\draw (B23) arc [radius=1, start angle=-120, end angle=-60] ;
	
	\draw (C11) arc [radius=4, start angle=120, end angle= 60] ;  
	\draw (C21) arc [radius=4, start angle=120, end angle=60] ;  
	\draw (C12) arc [radius=4, start angle=-120, end angle= -60] ;  
	\draw (C22) arc [radius=4, start angle=-120, end angle=-60] ;  
	\draw (C31) -- (C41);
	\draw (C32) -- (C42);
	\draw (C33) -- (C43);
	\draw (C34) -- (C44);
	\draw (C13) arc [radius=1, start angle=120, end angle= 60] ;
	\draw (C23) arc [radius=1, start angle=-120, end angle=-60] ;
	
	\draw (D31) -- (D41);
	\draw (D32) -- (D42);
	\draw (D33) -- (D43);
	\draw (D34) -- (D44);
	\draw (D13) arc [radius=1, start angle=120, end angle= 60] ;
	\draw (D23) arc [radius=1, start angle=-120, end angle=-60] ;
	
\end{tikzpicture}

Consider the sequence of graphs
\begin{equation}\label{FraSeq}
	\begin{tikzcd}
		G_0&\arrow[l, "p^1_{0}"'] G_1&\arrow[l, "p^2_{1}"']G_2&\arrow[l,]\cdots
	\end{tikzcd}
\end{equation}
to show that $G$ is isomorphic to its inverse limit. We need to show the following. 
\begin{lm}
	$xEy$ if and only if $p_{n}(x)E_np_{n}(y)$ for every $n$.
\end{lm}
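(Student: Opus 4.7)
The statement says that membership in $E$ is exactly the conjunction of its finite restrictions. Since the defining conditions of $E$ and $E_n$ have almost identical shape, the proof is essentially a matter of matching up the quantifiers correctly, with a careful case analysis according to which clause in the disjunctive definition is being witnessed.

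First I would dispose of the reflexive case $x=y$, in which both sides hold trivially. For the forward direction, assume $x\ne y$ and $xEy$. If the first clause of the definition of $E$ applies with witness $m$ (so $x(k)=y(k)$ for $k<m$, $\{x(m),y(m)\}=\{2,3\}$, and $x(k)=y(k)\in\{0,1\}$ for $k>m$), then for $n\loe m$ the restrictions $p_n(x)$ and $p_n(y)$ coincide, so reflexivity of $E_n$ gives $p_n(x)E_n p_n(y)$; for $n>m$ the witness $l:=m<n$ satisfies verbatim the first clause in the definition of $E_n$. If instead the second clause of $E$ applies, then for every $n\goe 1$ the restrictions trivially satisfy the second clause of $E_n$, and $n=0$ is vacuous since $G_0$ is a one-vertex graph.

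For the backward direction, assume $x\ne y$ and $p_n(x)E_n p_n(y)$ for every $n$. Let $m$ be the least index with $x(m)\ne y(m)$. For any $n>m$ the tuples $p_n(x)$ and $p_n(y)$ are distinct, so one of the two clauses of $E_n$ must apply. If the first clause applies, the witness $l$ is forced to equal $m$ (because $x$ and $y$ must agree strictly below $l$ and differ at $l$, which pins $l$ to the first disagreement). The key observation is that the two clauses of $E_n$, when read as conditions on the pair $(x,y)$ alone, are mutually exclusive and their validity is independent of $n$: the first clause requires $\{x(m),y(m)\}=\{2,3\}$, while the second forces $m=0$ and $\{x(0),y(0)\}=\{0,1\}$. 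Hence the same clause holds for every $n>m$, and taking the union of the bounded tail-conditions over all such $n$ produces the corresponding unbounded condition in the definition of $E$, yielding $xEy$.

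The only real care needed is the bookkeeping in the backward direction: verifying that the two clauses cannot swap as $n$ varies, and promoting a family of bounded quantifiers ``$k<n$'' into the single unbounded quantifier in the definition of $E$. I do not foresee any deeper obstacle.
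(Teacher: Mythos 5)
Your proof is correct and takes essentially the same route as the paper's: isolate the first coordinate of disagreement, use $E_{m+1}$ to pin down which clause applies, and use the relations $E_n$ for larger $n$ to promote the bounded tail conditions to the unbounded one in the definition of $E$. Your explicit observation that the two clauses are mutually exclusive and $n$-independent is, if anything, more careful than the paper's own (rather terse) argument.
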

\begin{proof}
	If $xEy$, then clearly $x\vert_n E_n y\vert_n$ for every $n$. 
	
	Assume now, that $p_{n}(x)E_np_{n}(y)$ for all $n$. If $x=y$, then we are done. Otherwise there is the smallest $l$ with $x(l)\neq y(l)$. Since $x\vert_{l+1} E_{l+1}y\vert_{l+1}$, then $\{x(l),y(l)\}=\{2,3\}$ and $x(k)=y(k)$ for $k<l$. If $l=0$, then $\{x(l),y(l)\}=\{0,1\}$ or $\{x(l),y(l)\}=\{2,3\}$. Using the fact that $p_{k+1}(x)E_{k+1}p_{k+1}(y)$ for $k>l$ we obtain that $x(k)=y(k)\in\{0,1\}$ for $k>l$. Hence $xEy$. 
\end{proof}

\begin{prop}
	The sequence \eqref{FraSeq} is a \fra\ sequence in the category of finite graphs with quotient maps. 
\end{prop}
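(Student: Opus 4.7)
The plan is to verify directly the defining property of a \fra\ sequence: for any finite nonempty graph $H$ and any quotient map $g\colon H \to G_k$, produce some $n > k$ and a quotient map $f\colon G_n \to H$ satisfying $g \circ f = p^n_k$.

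First, I would establish the following structural lemma: each $G_m$ is a perfect matching with loops, that is, a disjoint union of ``dominoes'' in which every vertex has exactly one non-self neighbor. This is a routine case analysis of the edge definition---if $x \in \{0,1\}^m$ then the unique neighbor is the case-(b) flip of $x(0)$, and otherwise the unique neighbor is the case-(a) flip at the largest position $\ell^*$ with $x(\ell^*) \in \{2,3\}$. Consequently the partner structure of $G_n$ can be described transparently in terms of $p^n_k$: within the fiber over $v \in G_k$, the $2^{n-k}$ vertices $v \cdot s$ with $s \in \{0,1\}^{n-k}$ are paired with the corresponding $w \cdot s$ in the fiber of the unique $G_k$-partner $w$ of $v$ (the \emph{between-fiber} dominoes), while the remaining $4^{n-k} - 2^{n-k}$ vertices pair up among themselves into $(4^{n-k} - 2^{n-k})/2$ \emph{within-fiber} dominoes.

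Pick $n$ sufficiently large so that for every edge $\{v,w\}$ of $G_k$ there are more between-fiber dominoes over $\{v,w\}$ than $H$-edges above it, and in every fiber there are more within-fiber dominoes than $|H_v| + |E(H_v)|$, where $H_v = g^{-1}(v)$ and $E(H_v)$ is the set of $H$-edges internal to $H_v$. Then I would define $f$ in three layers. In Layer~1, for each edge $\{a,b\}$ of $H$ with $g(a) = v \neq w = g(b)$, pick a fresh $s \in \{0,1\}^{n-k}$ and set $f(v \cdot s) = a$, $f(w \cdot s) = b$; assign the remaining between-fiber dominoes over each $\{v, w\}$ to some fixed $H$-edge above $\{v, w\}$, which exists by the strictness of $g$. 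In Layer~2, for each edge $\{a, b\}$ of $H$ internal to some $H_v$, pick a fresh within-fiber domino in the fiber of $v$ and map its two endpoints to $a$ and $b$. In Layer~3, for each vertex $a \in H_v$ still not in the image of $f$, pick a fresh within-fiber domino and map both of its endpoints to $a$, producing a loop; collapse any leftover within-fiber dominoes to a single fixed vertex of $H_v$.

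The verification is then routine. Compatibility $g \circ f = p^n_k$ is built into each assignment; surjectivity holds because every vertex of $H$ is hit in Layer~1, 2, or~3; the homomorphism property holds because every domino of $G_n$ is mapped to either an edge or a loop of $H$; and strictness holds because Layers~1 and~2 realize every edge of $H$ by an explicit preimage domino. I expect the main subtlety to be the covering of $H$-vertices incident to no non-loop edge---in particular, $H$-vertices isolated within their fiber that are not linked to the neighboring fiber---and Layer~3 is tailored precisely to handle these via within-fiber ``loops''. Once $f$ is shown to be a surjective strict homomorphism closing the triangle, the \fra\ property of~\eqref{FraSeq} follows.
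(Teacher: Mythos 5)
Your proposal is correct and follows essentially the same route as the paper: both rest on the observation that each $G_m$ is a perfect matching (every vertex has a unique non-self neighbor), count the within-fiber and between-fiber ``dominoes'' over each edge of $G_k$, choose $n$ large enough, and then assign dominoes to cross-fiber edges, internal edges, and (by collapsing a domino to a loop) to uncovered vertices, sending leftovers to a fixed edge or vertex. The paper organizes this per edge $D_i$ of $G_k$ with a black/red coloring rather than in your three layers, but the construction and verification are the same in substance.
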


\begin{proof}
	Let $H$ be a finite graph and let $\map pHG_k$ be a quotient map. Note that each vertex $x\in G_k$ is adjacent to exactly one vertex $y\in G_k$, $y\neq x$. Since $\vert G_k\vert = 4^k$, then $G_k$ has $2\cdot 4^{k-1}$ edges. Let $(D_i)_{i=1}^{2\cdot 4^{k-1}}$ be the set of all edges in $G_k$. 
	
	Fix an edge $\{x,y\}$ in $G_k$. Then for any $m>k$
	\begin{itemize}
		\item there are at least $2^{m-k-1}$ edges $\{u,v\}$ in $G_m$ with $p^m_k(u)=p^m_k(v)=x$; they are of the form $u=x\concat 3 \concat t$ and $v=x \concat 2 \concat t$ where $t$ is a $0$--$1$ sequence of length $m-k-1$;
		\item there are $2^{m-k}$ edges $\{u,v\}$ in $G_m$ with $p^m_k(u)=x$ and $p^m_k(v)=y$; they are of the form $u=x \concat t$ and $v=y \concat t$ where $t$ is a $0$--$1$ sequence of length $m-k$.
	\end{itemize} 
	Let us fix $m$ such that $2^{m-k-1}$ is strictly greater than the number of vertices and edges of $H$.  
	
	For every $i\leq 2\cdot 4^{k-1}$ we will find a quotient map $\map {g_i}{(p^m_{k})^{-1}(D_i)}{p^{-1}(D_i)}$ such that the diagram
	\begin{equation}\label{diag1}
		\begin{tikzcd}
			D_i &&\arrow[ll, "p^m_{k}"']  (p^m_{k})^{-1}(D_i) \arrow[ld,dashed,"g_i"] \\
			&p^{-1}(D_i) \arrow[lu,"p"]&
		\end{tikzcd}
	\end{equation}
	commutes.

	Suppose that we have already found such $g_i$'s. We define $\map g{G_m}H$ as follows: $g(x)=g_i(x)$ provided $x\in (p^m_{k})^{-1}(D_i)$. Since there are no edges between $D_i$ and $D_j$ for $i\neq j$, $p$ and $p^m_{k}$ are quotients, there are no edges between $(p^m_{k})^{-1}(D_i)$ and $(p^m_{k})^{-1}(D_j)$, and no edges between $p^{-1}(D_i)$ and $p^{-1}(D_j)$. From this we easily obtain that $g$ is a quotient map. Clearly, the diagram
	\[
	\begin{tikzcd}
		G_k &&\arrow[ll, "p^m_{k}"']  G_m \arrow[ld,"g"] \\
		&H \arrow[lu,"p"]&
	\end{tikzcd}
	\]
	commutes.

	Now, it is enough to construct $g_i$'s. Let as color $D_i$ vertices $D_i=\{{\rm black},{\rm red}\}$. We say that a vertex $x\in H$ is black (or red) if $p(x)$ is black (or red). Isolated verices of $H$ are divided into black isolated $BI$ and read isolated $RI$. The edges $\{x,y\}$, with $x\neq y$, in $H$ are divided into black edges $BE$ if $p(x)$ and $p(y)$ are both black, red edges $RE$ if $p(x)$ and $p(y)$ are both red, and  black-and-red edges $BRE$ if $p(x)$ and $p(y)$ have different colors. Since $p$ is quotient map, then $BRE\neq\emptyset$. Let us fix $\{r,b\}\in BRE$. Put $\ell_1:=\vert BI\vert$, $\ell_2:=\vert BE\vert$, $\ell_3:=\vert RI\vert$, $\ell_4:=\vert RE\vert$, and $\ell_5:=\vert BRE\vert$. Then $\ell_1,\ell_2,\ell_3,\ell_4\geq 0$ and $\ell_5\geq 1$.

	Recall that $(p^m_{k})^{-1}(D_i)$ does not contain isolated vertices. We divide its edges $\{x,y\}$ into three categories: black $\mathbf{B}$ if $p^m_{k}$ maps $x$ and $y$ into black, red $\mathbf{R}$ if $p^m_{k}$ maps $x$ and $y$ into red, and black-and-red $\mathbf{BR}$ if $p^m_{k}$ maps $x$ and $y$ into two colors. Divide $\mathbf{B}$ into three pairwise disjoint sets $\mathbf{B}_1,\mathbf{B}_2$ and $\mathbf{B}_3$ with $\vert\mathbf{B}_1\vert=\ell_1$, $\vert\mathbf{B}_2\vert=\ell_2$ and $\mathbf{B}_1\cup \mathbf{B}_2\cup \mathbf{B}_3=\mathbf{B}$. This is possible, since $m$ is so large that $(p^m_{k})^{-1}(D_i)$ contains more black edges than $H$ has all edges. Similarly, divide $\mathbf{R}$ into three pairwise disjoint sets $\mathbf{R}_1,\mathbf{R}_2$ and $\mathbf{R}_3$ with $\vert\mathbf{R}_1\vert=\ell_3$, $\vert\mathbf{R}_2\vert=\ell_4$ and $\mathbf{R}_1\cup \mathbf{R}_2\cup \mathbf{R}_3=\mathbf{R}$. Finally, divide $\mathbf{BR}$ into two disjoint sets $\mathbf{BR}_1$ and $\mathbf{BR}_2$ with $\vert\mathbf{BR}_1\vert=\ell_5$. 
	
	Now we are ready to define $g_i$:
	\begin{itemize}
		\item if $\{x,y\}$ is the $j$-th edge in $\mathbf{B}_1$, then $g_i(x)=g_i(y)$ is the $j$-th element of $BI$;
		\item if $\{x,y\}$ is the $j$-th edge in $\mathbf{B}_2$, then $\{g_i(x),g_i(y)\}$ is the $j$-th edge of $BE$;
		\item if $\{x,y\}$ is an edge in $\mathbf{B}_3$, then $g_i(x)=g_i(y)=b$;
		\item if $\{x,y\}$ is the $j$-th edge in $\mathbf{R}_1$, then $g_i(x)=g_i(y)$ is the $j$-th element of $RI$;
		\item if $\{x,y\}$ is the $j$-th edge in $\mathbf{R}_2$, then $\{g_i(x),g_i(y)\}$ is the $j$-th edge of $RE$;
		\item if $\{x,y\}$ is an edge in $\mathbf{R}_3$, then $g_i(x)=g_i(y)=r$;
		\item if $\{x,y\}$ is the $j$-th edge in $\mathbf{BR}_1$, then $\{g_i(x),g_i(y)\}$ is the $j$-th edge of $BRE$ such that $x$ and $g_i(x)$ have the same color, and $y$ and $g_i(y)$ have the same color; 
		\item if $\{x,y\}$ is an edge in $\mathbf{BR}_2$, then $\{g_i(x),g_i(y)\}=\{b,r\}$ such that $x$ and $g_i(x)$ have the same color, and $y$ and $g_i(y)$ have the same color.
	\end{itemize}          
	
	By the construction, $g_i$ is the quotient map and diagram \eqref{diag1} commutes.  
\end{proof}

\section{Retractable structures}

We now make an interlude with a few general facts concerning retractions and right-invertible mappings. Recall that a homomorphism $f$ is \emph{right-invertible} if there exists a homomorphism $i$ such that $f \cmp i$ is the identity. In that case, $i$ is necessarily an embedding and $f$ is a quotient mapping. Left-invertibility is defined in a symmetric way. A \emph{retraction} is a self-homomorphism that is the identity on its range. Note that, given a left-invertible homomorphism $i$ and given its left inverse $f$, we have a retraction $f \cmp i$.
Conversely, given a retraction $\map r X X$ and taking $f$ to be the mapping $r$ treated as a surjection onto $\img rX$, we obtain a right-invertible homomorphism $f$ whose left inverse is the inclusion $i \colon \img rX \subs X$. Of course, all these concepts are purely category-theoretic and they received a significant attention in general and algebraic topology, Banach space theory (where retractions are called \emph{bounded linear projections}), and in theoretical computer science, particularly in domain theory.

In what follows, we shall often consider retractions as surjections $\map f XY$ with $Y \subs X$, where the inclusion $Y \subs X$ is a natural choice for a right inverse to $f$.

\separator

Let us assume that we are given a class of relational structures (i.e. graphs, partially ordered sets, etc.). Our goal now is to select two subclasses related to retractions. We start with a general fact.

\begin{prop}\label{PROPerbigowhrgoiw}
	Let $X$ be a set and assume $\map f XX$ is a mapping such that $\img f X$ is finite. Then there exists $m>0$ such that $f^m$ is a retraction, that is, $f^m \cmp f^m = f^m$.
\end{prop}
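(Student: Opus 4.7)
The plan is to exploit the fact that everything of interest happens inside the finite set $Y = \img f X$. The key observation is that iterates of $f$ send $X$ into $Y$ after one step, and $f$ maps $Y$ into itself (since $f(Y) \subseteq f(X) = Y$). So I replace the problem on $X$ with the study of the restriction $g := f\rest Y$, which is a self-map of a finite set.

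First I would consider the decreasing chain
\[
Y \sups f(Y) \sups f^2(Y) \sups \cdots
\]
of finite sets. Because $Y$ is finite, this chain stabilizes: there is some $n_0$ and a set $Z \subs Y$ with $f^n(Y) = Z$ for all $n \goe n_0$. On $Z$, the map $g$ is surjective ($f(Z) = Z$), and a surjection from a finite set to itself is a bijection; hence $f\rest Z$ is a permutation of $Z$. Let $d$ be its order, so that $(f\rest Z)^d = \id Z$.

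Next I would pick $m$ to be any multiple of $d$ with $m \goe n_0 + 1$. Then for every $x \in X$ we have $f^m(x) \in f^{m-1}(Y) = Z$ (since $m-1 \goe n_0$), so $f^m(x) \in Z$. Applying $f^m$ again stays inside $Z$ and acts there by $(f\rest Z)^m = \id Z$ because $d \mid m$. Consequently
\[
f^m(f^m(x)) = (f\rest Z)^m(f^m(x)) = f^m(x),
\]
which is precisely $f^m \cmp f^m = f^m$.

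There is no real obstacle here; the only subtle point is ensuring the two conditions on $m$ are simultaneously satisfiable, which is immediate since one can take $m = d\cdot k$ for any $k$ with $dk \goe n_0 + 1$. Alternatively, one can phrase the argument semigroup-theoretically: the finite semigroup of self-maps of $Y$ generated by $g$ contains an idempotent element, which is of the form $g^m$ for some $m \goe 1$, and this idempotent pulled back gives the desired retraction $f^m$ on $X$.
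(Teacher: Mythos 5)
Your proof is correct. It takes a mildly different route from the paper's: the paper notes that the cyclic semigroup $\setof{f^n}{n \goe 1}$ is finite (each $f^{n+1}$ is determined by the restriction of $f^n$ to the finite set $\img f X$), extracts $k,r>0$ with $f^k = f^{k+r}$ by pigeonhole, and checks directly that $f^{kr}$ is idempotent. You instead stabilize the decreasing chain of images to find the eventual image $Z$ with $f(Z)=Z$, observe that $f\rest Z$ is a permutation, and take $m$ to be a large enough multiple of its order $d$. Both arguments are incarnations of the fact that a finite cyclic semigroup contains an idempotent --- and your closing ``alternative'' semigroup phrasing is essentially the paper's actual proof --- but your main argument is the more structural one: it exhibits the range of the resulting retraction as the eventual image $Z$ and shows $f^m$ is the identity there, information the paper's pigeonhole argument does not provide. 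The only point requiring care in your version, the simultaneous satisfiability of $d \mid m$ and $m \goe n_0+1$, you address explicitly, so there is no gap; the paper's proof is marginally shorter, yours marginally more informative.
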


\begin{pf}
	The semigroup $S = \setof{f^n}{n \in \Nat^+}$ is finite, because $f^{n+1} = f^n \cmp f$, where $f^n$ acts on the image of $f$, so there are only finitely many possibilities. Thus, there are $k,r>0$ such that $f^{k} = f^{k+r}$. An obvious induction shows that $f^n= f^{n+r}$ for every $n \goe k$. Finally,
	$f^{k r} \cmp f^{k r} = f^{kr}$,
	therefore $f^{k r}$ is a retraction.
\end{pf}

A well-known (and not completely trivial) fact is that every finite semigroup has an idempotent, however, in the proof above the semigroup is generated by one element, therefore finding an idempotent is very easy.

A (relational) structure $X$ is \emph{finitely retractable} if for every finite $F \subs X$ there is a retraction $\map r X X$ with $F \subs \img r X$ and $\img r X$ finite.

A structure $X$ is \emph{point-by-point retractable} (briefly: \emph{PPR}) if $X = \bigcup_{\ntr} X_n$, where $|X_0|=1$, $X_n \subs X_{n+1}$, $|X_{n+1} \setminus X_n| \loe 1$ and $X_n$ is a retract of $X_{n+1}$ for each $\ntr$. Clearly, a point-by-point retractable structure is necessarily countable.

These two definitions seem to be a bit unrelated. The following standard claim clarifies the situation. A more general, purely category-theoretic, statement can be found in~\cite[Section 6]{K_flims}.

\begin{prop}\label{PROPSztyryDwa}
	A countable structure $X$ is finitely retractable if and only if there is a sequence of retractions $\sett{r_n}{\ntr}$ of $X$ such that $r_n \cmp r_m = r_{\min(n,m)}$ for every $n,m \in \nat$ and $\img {r_n}X$ is finite for every $\ntr$.
	
	A structure $X$ is point-by-point retractable if and only if there exists a sequence $\sett{r_n}{\ntr}$ as above, moreover satisfying $|\img {r_n}X| \loe n$ for every $\ntr$.
\end{prop}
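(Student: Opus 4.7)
The plan is to reduce both equivalences to a single construction that promotes a chain of finite retracts of $X$ to a compatible tower of global retractions.

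For the $(\Leftarrow)$ directions, given a sequence $(r_n)_{n\in\omega}$ as stated, the identity $r_m\circ r_n = r_n$ for $m\geq n$ forces $Y_n:=r_n[X]\subs Y_m$ (since every element of $Y_n$ is then fixed by $r_m$, hence lies in $r_m[X]$), so the images form an ascending chain of finite retracts of $X$. Assuming in addition $X=\bigcup_n Y_n$ (which the converse construction provides and is implicitly needed), every finite $F\subs X$ lies in some $Y_N$ and $r_N$ witnesses finite retractability. In the PPR case the size bound forces $|Y_{n+1}\setminus Y_n|\le 1$, and after thinning stagnant indices the chain supplies the PPR presentation.

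For the substantive $(\Rightarrow)$ direction, I would enumerate $X=\{x_n:n\in\omega\}$ and inductively build a chain $Y_0\subs Y_1\subs\cdots$ of finite retracts of $X$ with $\{x_0,\dots,x_n\}\subs Y_n$, applying finite retractability at each step to $Y_n\cup\{x_{n+1}\}$; in the PPR case this chain is given by the very definition of $X$. For each $n$ pick \emph{any} retraction $s_n:X\to Y_n$ and set $e_n := s_n|_{Y_{n+1}}:Y_{n+1}\to Y_n$; since $s_n$ fixes $Y_n$, $e_n$ is a retraction of $Y_{n+1}$ onto $Y_n$. The key step is the definition
\[
r_n(x) := e_n\circ e_{n+1}\circ\cdots\circ e_{N-1}(x)
\]
for any $N\geq n$ with $x\in Y_N$. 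This is well-defined (each $e_k$ fixes $Y_k$, so enlarging $N$ changes nothing) and a homomorphism (any single relational tuple lies inside some $Y_N$, reducing to a finite composition of homomorphisms); moreover $r_n$ is a retraction with image $Y_n$. The compatibility $r_n\circ r_m = r_{\min(n,m)}$ then splits into two cases: for $n\ge m$ one has $r_m[X] = Y_m\subs Y_n$, which is fixed by $r_n$; for $n\le m$ one plugs $r_m(x)\in Y_m$ into the defining formula for $r_n$ with $N:=m$ and the telescope collapses to $r_n(x)$.

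The main obstacle to anticipate is that retractions $s_n$ picked at different levels need not be mutually compatible, and nothing obvious amalgamates them. Naive fixes collapse: setting $r_{n+1}:=s_n$ gives no control over $r_n\circ r_{n+1}$, and the obvious corrections fail---$s_n\circ r_n = r_n$ because $s_n$ fixes $Y_n\supseteq r_n[X]$, while $r_n\circ s_n$ is a retraction with image already inside $Y_n$ and so loses the newly added vertex $x_{n+1}$. The resolution is the one above: discard the $s_n$ altogether and rebuild every $r_n$ as a backward composition of the bonding maps $e_n$, exploiting the fact that every element of $X$ eventually enters some $Y_N$, so that the infinite composition stabilizes pointwise.
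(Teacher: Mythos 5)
Your proposal is correct and follows essentially the same route as the paper: build an increasing chain of finite retracts exhausting $X$, fix one-step retractions between consecutive levels, and define each $r_n$ as the pointwise-stabilizing backward composition, with the same two-case verification of $r_n\cmp r_m=r_{\min(n,m)}$. Your side remark that the $(\Leftarrow)$ direction implicitly needs $X=\bigcup_n \img{r_n}{X}$ is a fair observation about the statement, but does not affect the substance of the argument.
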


\begin{pf}
	Assume $X$ is finitely retractable.
	Write $X = \bigcup_{\ntr} X_n$, where each $X_n$ is a finite retract of $X$ and $X_n \subs X_{n+1}$ for every $n$. This can be done by a straightforward induction.
	
	Denote by $r_n^{n+1}$ a fixed retraction from $X_{n+1}$ onto $X_n$. Given $m>n$, define
	$$r^m_n = r^{n+1}_n \cmp \dots \cmp r^m_{m-1}.$$
	Then $r^m_n$ is a retraction from $X_m$ onto $X_n$. Given $x \in X$, define
	$$r_n(x) = r^m_n(x),$$
	where $m>n$ is such that $x \in X_m$. This does not depend on the choice of $m$ and defines a retraction from $X$ onto $X_n$.
	Finally, $r_n \cmp r_m = r_{\min(n,m)}$ for every $n,m$.
	
	The same argument shows the second part, now just assuming that $X_n$ is a retract of $X_{n+1}$ for each $n$.
\end{pf}

Below we provide another characterization of countable finitely retractable structures.

\begin{tw}\label{THMboundedApproxims}
	Assume $\sett{\map{f_n}{X}{X}}{\ntr}$ is a sequence of self-homomorphisms of a fixed relational structure $X$ such that $\img{f_n}{X}$ is finite for each $\ntr$ and for every $x \in X$ there is $n_0$ such that $f_n(x) = x$ for $n\goe n_0$ (in other words, $f_n$ converges pointwise to the identity, when $X$ is endowed with the discrete topology).
	Then $X$ is countable and finitely retractable.
\end{tw}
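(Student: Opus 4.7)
The plan is to first establish countability almost for free, and then reduce the finite retractability statement to Proposition \ref{PROPerbigowhrgoiw} via a suitable choice of iterate.

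For countability, I would observe that pointwise convergence of $f_n$ to the identity means that for every $x \in X$ there is $n$ with $f_n(x) = x$, so $x \in \img{f_n}{X}$. Hence $X = \bigcup_{\ntr} \img{f_n}{X}$ is a countable union of finite sets.

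For finite retractability, fix a finite set $F \subs X$. By pointwise convergence and the finiteness of $F$, there is a single index $N$ such that $f_N(x) = x$ for every $x \in F$. In particular $F \subs \img{f_N}{X}$, but $f_N$ itself need not be idempotent. Here I would invoke Proposition \ref{PROPerbigowhrgoiw}: since $\img{f_N}{X}$ is finite, there exists $m > 0$ such that $r \defi f_N^m$ satisfies $r \cmp r = r$, i.e. $r$ is a retraction of $X$. Being a composition of homomorphisms, $r$ is a homomorphism; its image is contained in $\img{f_N}{X}$, hence finite; and since $f_N$ fixes $F$ pointwise, an immediate induction gives $f_N^m(x) = x$ for every $x \in F$, so $F \subs \img{r}{X}$. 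This is exactly what is needed to conclude that $X$ is finitely retractable.

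I do not foresee any real obstacle; the only subtlety is noticing that one cannot just take $r = f_N$ (since pointwise convergence to the identity is not enough to make individual $f_N$ idempotent), and that Proposition \ref{PROPerbigowhrgoiw} supplies exactly the missing iterate. One could alternatively prove this directly by extracting an idempotent from the finite semigroup generated by $f_N$ acting on its own image, but using the already proved proposition is cleaner and keeps the argument short.
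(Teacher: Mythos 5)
Your proposal is correct and follows essentially the same route as the paper: countability from $X=\bigcup_{\ntr}\img{f_n}{X}$, then for a finite $F$ choosing an index $N$ with $f_N\rest F=\id{F}$ and invoking Proposition~\ref{PROPerbigowhrgoiw} to replace $f_N$ by an idempotent iterate whose finite image still contains $F$. The only difference is that you spell out the (easy) verification that $f_N^m$ still fixes $F$, which the paper leaves as ``obviously''.
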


This is in fact a characterization, due to Proposition~\ref{PROPSztyryDwa}. 

\begin{pf}
	Clearly, $X = \bigcup_{\ntr}\img {f_n}X$ is countable.
	Fix a finite substructure $F \subs X$ and fix $m$ such that $f_m(x)=x$ for every $x \in F$. By Proposition~\ref{PROPerbigowhrgoiw}, there is $k>0$ such that $f_m^k$ is a retraction. Obviously, $F \subs \img{f_m^k}{X}$.	
\end{pf}

\begin{tw}\label{THMretsFinrectsha}
	Every retract of a finitely retractable structure is finitely retractable.
\end{tw}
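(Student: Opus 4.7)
The plan is to combine a fixed retraction witnessing $Y$ as a retract of $X$ with the finite retractions supplied by the finite retractability of $X$, and then repair the composition into an honest idempotent using Proposition~\ref{PROPerbigowhrgoiw}.

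Concretely, suppose $X$ is finitely retractable and $Y \subs X$ is a retract, witnessed by a retraction $\map{\rho}{X}{X}$ with $\img{\rho}{X} = Y$ and $\rho \rest Y = \id Y$. To verify that $Y$ is finitely retractable, fix a finite $F \subs Y$. First I would apply finite retractability of $X$ to obtain a retraction $\map{R}{X}{X}$ with $F \subs \img{R}{X}$ and $\img{R}{X}$ finite. Then I would form the composition $s = \rho \cmp (R \rest Y) \colon Y \to Y$. Since $\img{s}{Y} \subs \rho(\img{R}{X})$, the image of $s$ is finite. Moreover, for every $x \in F$ one has $R(x) = x$ (because $F \subs \img{R}{X}$ and $R$ is idempotent on its image), and then $\rho(R(x)) = \rho(x) = x$ (because $x \in Y$ and $\rho$ fixes $Y$). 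Hence $s$ fixes $F$ pointwise.

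The one defect is that $s$ need not be idempotent: composing two retractions does not automatically produce a retraction. This is precisely where Proposition~\ref{PROPerbigowhrgoiw} enters: since $s$ has finite image, there exists $m > 0$ such that $s^m$ is a retraction of $Y$. A trivial induction shows $s^m(x) = x$ for all $x \in F$, so $F \subs \img{s^m}{Y}$, and $\img{s^m}{Y} \subs \img{s}{Y}$ is finite. Thus $s^m$ is the required finite retraction of $Y$ containing $F$ in its image, finishing the proof.

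The main obstacle is exactly this idempotency issue, which is why the proposition on eventual idempotents was proved earlier; with it in hand, the argument is essentially a one-line diagram chase. A minor bookkeeping point is to keep straight which maps land where, but once one writes down $s = \rho \cmp (R \rest Y)$ and verifies that $F$ is fixed pointwise by $s$ (hence by every power of $s$), nothing else is needed.
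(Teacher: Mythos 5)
Your argument is correct and is essentially the paper's own proof: both form the composite of the retraction onto $Y$ with a finite retraction of $X$ fixing $F$, observe it has finite image and fixes $F$ pointwise, and then invoke Proposition~\ref{PROPerbigowhrgoiw} to replace it by an idempotent power. The only cosmetic difference is that you restrict to a self-map of $Y$ from the start, while the paper works with the self-map $s\cmp r$ of $X$ and notes afterwards that its image is a retract of $Y$.
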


\begin{pf}
	Assume $X$ is finitely retractable and $\map s X X$ is a retraction onto $Y \subs X$.
	Fix a finite substructure $F \subs Y$ and find a finite retract $A$ of $X$ with $F \subs A$. Let $\map r X X$ be a retraction onto $A$.
	Consider $f = s \cmp r$. Note that $f$ is identity on $F$ and its image is finite. Thus, by Proposition~\ref{PROPerbigowhrgoiw}, there is $k>0$ such that $f^k$ is a retraction. Its image is a finite retract of $Y$ containing $F$.	
\end{pf}

Note that every finite structure is obviously finitely retractable while finite structures (e.g. graphs) might not be point-by-point retractable: For instance, the 5-element cycle does not admit any retraction on its 4-element subgraph. 

We do not know whether retracts of PPR structures are PPR, however we have a weaker version of Theorem~\ref{THMretsFinrectsha}.

\begin{tw}\label{THMRetraktyFinitoPoPaPr}
	A finite retract of a PPR structure is PPR.
\end{tw}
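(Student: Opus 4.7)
The plan is to prove this by induction on $|X|$. The base case $|X|=1$ is immediate, since then $Y = X$ is trivially PPR.

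For the inductive step, let $X$ be PPR of size $k \geq 2$ with PPR chain $X_0 \subset X_1 \subset \cdots \subset X_{k-1} = X$, and let $s \colon X \to Y$ be the retraction onto $Y$. Write $v = x_{k-1}$ for the last vertex of the chain, $X' = X_{k-2} = X \setminus \{v\}$ (which is PPR as the initial segment of the chain and is a retract of $X$), and $r \colon X \to X'$ for the retraction coming from the chain. The easy case is $v \notin Y$: then $Y \subseteq X'$ and the restriction $s|_{X'}$ is a retraction of $X'$ onto $Y$ (it is the identity on $Y$ and maps into $Y$ with image containing $s(Y)=Y$). Since $|X'|<|X|$, the inductive hypothesis applies and gives that $Y$ is PPR.

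The interesting case is $v \in Y$. Set $Y' := Y \setminus \{v\}$ and analyse the image $s(X')$. In the first sub-case where $s(X') \subseteq Y'$, the restriction $s|_{X'}$ is a retraction of $X'$ onto $Y'$, so by the inductive hypothesis $Y'$ is PPR. Moreover, $\tau := (s \circ r)|_Y$ is a retraction of $Y$ onto $Y'$: for $y \in Y'$ we have $y \in X'$, so $r(y)=y$ and $s(y)=y$, giving $\tau(y)=y$; and $\tau(v) = s(r(v)) \in s(X') \subseteq Y'$. Extending any PPR chain of $Y'$ by appending $v$ therefore yields a PPR chain of $Y$.

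The genuinely subtle sub-case is when $s(X')$ contains $v$. Then there exists $v' \in X'$ with $s(v')=v$, and $v' \notin Y$ (because $s$ is the identity on $Y$, so $v' \in Y$ would force $v=v'$, contradicting $v \notin X'$). The plan here is to show that the PPR chain of $X$ can be re-chosen so that its last vertex lies outside $Y$, reducing to the easy case. The natural candidate is to take $v'$ as the new last vertex; this requires verifying that $X \setminus \{v'\}$ is itself a retract of $X$ \emph{and} is PPR. The map $\pi \colon X \to X \setminus \{v'\}$ defined by $\pi(v')=v$ and $\pi(x)=x$ otherwise works whenever every neighbour $u$ of $v'$ satisfies $\{v,u\} \in E$; this neighbour condition, in the cases where $u \in Y$, is automatic from applying $s$ to the edge $\{v',u\}$, since then $\{v, s(u)\} = \{v,u\} \in E$. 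The main obstacle I anticipate is handling neighbours $u \in X \setminus Y$ of $v'$, where $\pi$ may fail to be a homomorphism directly; for this one expects to iterate the construction, or to combine $\pi$ with the chain retractions (via Proposition~\ref{PROPerbigowhrgoiw} applied to an appropriate self-map of $X$ with finite image inside $X \setminus \{v'\}$) to extract an honest retraction onto a PPR structure of smaller size, after which the inductive hypothesis concludes.
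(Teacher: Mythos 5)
Your Cases 1 and 2a are correct and run parallel to the paper's argument, but sub-case 2b --- which is exactly the hard part of the theorem --- is a genuine gap, and the repair you sketch does not work as stated. Two concrete problems. First, the map $\pi$ sending $v'$ to $v$ need not be a homomorphism: you verify the neighbour condition only for neighbours $u\in Y$ of $v'$, and for a neighbour $u\in X\setminus Y$ nothing forces $\{v,u\}$ to be an edge; the fallback (``iterate the construction'' / invoke Proposition~\ref{PROPerbigowhrgoiw}) is never carried out. Second, even if $X\setminus\{v'\}$ were shown to be a retract of $X$, your plan needs it to be \emph{PPR} in order to build a new chain for $X$ ending at $v'$; but $X\setminus\{v'\}$ is a retract of $X$ itself, which has size $k$, so the inductive hypothesis (about retracts of PPR structures of size $<k$) does not apply to it --- the reduction is circular.

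The fix, which is the paper's key idea, is to split on the single value $s(r(v))\in Y$ rather than on whether $v\in s(X')$. If $s(r(v))\in Y'$, then $(s\circ r)|_Y$ is a retraction of $Y$ onto $Y'$, and $Y'$ is a retract of the smaller PPR structure $X'$ (compose $s|_{X'}\colon X'\to Y$ with that retraction), so induction gives that $Y'$ is PPR and hence so is $Y$; note that this already disposes of many instances of your sub-case 2b, since $v\in s(X')$ does not preclude $s(r(v))\in Y'$. If instead $s(r(v))=v$, then necessarily $r(v)\notin Y$, the map $r|_Y$ is an isomorphism of $Y$ onto $C=Y'\cup\{r(v)\}\subseteq X'$ with inverse $s|_C$, and $(r\circ s)|_{X'}$ is a retraction of $X'$ onto $C$; induction applied to $X'$ gives that $C$, hence $Y$, is PPR. (The paper phrases this last step as a contradiction with the minimality of the ambient structure; with your induction on $|X|$ you can simply conclude directly.) One further small point: your induction on $|X|$ presumes $X$ finite, whereas the theorem allows $X$ to be countably infinite; reduce to the finite case first by observing that a finite retract $Y$ lies in some finite stage $X_n$ of the chain, which is a PPR retract of $X$ and onto which $s$ restricts to a retraction with image $Y$.
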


\begin{pf}
	We use induction on the cardinality of a structure.
	Fix a finite structure $A$ which is a retract of some PPR structure and assume the statement above is true for structures of cardinality $<|A|$. Fix a minimal with respect to cardinality PPR structure $B \sups A$ together with a retraction $\map r BB$ whose image is $A$. Then $B = B' \cup \sn v$ and there is a retraction $\map s B B$ onto $B'$. By minimality, $v \in A$. Let $A' = A \cap B' = A \setminus \sn v$. We claim that $r(s(v)) \in A'$.
	
	Suppose otherwise, that is $r(s(v)) = v$. Let $C = A' \cup \sn{s(v)}$.
	Then $s$ is a homomorphism from $A$ onto $C$ whose inverse is $r$ restricted to $C$. Thus $A$ is isomorphic to $C$ and $s \cmp r$ is a retraction of $B'$ onto $C$. On the other hand, $B'$ is a smaller PPR structure, a contradiction.
	
	We have shown that $r \cmp s$ is a retraction of $A$ onto $A'$. Moreover, $A'$ is a retract of a PPR structure (namely, $B'$) therefore by the inductive hypothesis it is PPR.
	Finally, $A$ is PPR.
\end{pf}

\subsection{Envelopes}

We now come back to profinite structures, namely, inverse limits of finite ones.
Namely, given a finitely retractable structure $X$ together with a witnessing chain of finite retracts $\ciag{X}$, we actually obtain a natural profinite structure containing $X$ as a dense substructure. Below we describe the general idea.

Formally, for each $n$ let us fix a retraction $\map{r^{n+1}_n}{X_{n+1}}{X_n}$ and for every $m>n$ let $r^m_n$ be the suitable composition forming a retraction from $X_m$ onto $X_n$.
By this way, we turn $\ciag X$ into an inverse sequence and therefore we can look at its limit $\ovr X$ as a profinite structure.
General topological considerations (see~\cite[Section~3]{KuMi}) give the following

\begin{prop}\label{PROPdensitySixkm}
	$X$ is a dense subset of $\ovr X$.
\end{prop}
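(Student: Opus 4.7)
The plan is to define a canonical map $\iota\colon X \to \ovr X$ by $\iota(x) = (r_n(x))_{\ntr}$, where $\sett{r_n}{\ntr}$ is the coherent family of retractions $r_n\colon X \to X_n$ provided by Proposition~\ref{PROPSztyryDwa}, and then to check density directly on a basis of the inverse-limit topology.

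First I would verify that $\iota(x)$ is indeed a thread in $\ovr X$, that is, $r^{n+1}_n(r_{n+1}(x)) = r_n(x)$ for every $\ntr$. This is immediate from the construction in Proposition~\ref{PROPSztyryDwa}: the restriction of $r_n$ to $X_{n+1}$ coincides with $r^{n+1}_n$, while $r_n \cmp r_{n+1} = r_n$ by the coherence relation $r_n \cmp r_m = r_{\min(n,m)}$. Each $r_n$ is a homomorphism, so $\iota$ is a structure homomorphism. Moreover, for each $x \in X$ one can pick $n_x$ with $x \in X_{n_x}$, and then $r_n(x) = x$ for all $n \goe n_x$; this yields injectivity and, since adjacency on $\ovr X$ is defined coordinate-wise, shows that relations are reflected. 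Thus $\iota$ is a structure embedding, so we may identify $X$ with $\iota(X) \subs \ovr X$.

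For density, recall that $\ovr X \subs \prod_{\ntr} X_n$ inherits the product topology of the discrete finite factors, so its topology has a base consisting of the sets
$$U_{\xi,k} = \setof{(z_n) \in \ovr X}{z_k = \xi_k}$$
where $\xi = (\xi_n) \in \ovr X$ and $\ntr$. Given such a basic open neighborhood, set $x := \xi_k$, viewed as an element of $X$ via $X_k \subs X$; since $\xi_k \in X_k$ and $r_k$ retracts onto $X_k$, we have $\iota(x)_k = r_k(\xi_k) = \xi_k$, so $\iota(x) \in U_{\xi,k}$. Hence $\iota(X)$ meets every nonempty basic open subset of $\ovr X$, which is precisely density.

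The argument is really just bookkeeping in the inverse-limit construction, and there is no substantial obstacle; the only thing to be careful about is the compatibility of the global retractions $r_n$ with the bonding maps $r^{n+1}_n$, which is built into the proof of Proposition~\ref{PROPSztyryDwa} and is the same mechanism underlying the reference~\cite{KuMi} cited in the text.
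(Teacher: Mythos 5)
Your argument is correct and is exactly the standard verification that the paper itself omits, deferring instead to ``general topological considerations'' in \cite[Section~3]{KuMi}. The two key points — that $\iota(x)=(r_n(x))_{\ntr}$ is a well-defined thread because $r_n\rest X_{n+1}=r^{n+1}_n$, and that every nonempty basic open set of $\ovr X$ is determined by a single coordinate $z_k=\xi_k$ with $\xi_k\in X_k$ already hit by $\iota(\xi_k)$ — are precisely the bookkeeping the citation stands in for, so your write-up is a faithful, self-contained substitute for the paper's reference.
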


We shall call $\ovr X$ an \emph{envelope} of $X$. The envelope of course depends on the choice of retractions $r^{n+1}_n$.
We refer to~\cite[Section~6]{K_flims} for details in the more general setting, namely, categories of embedding-projection pairs.

\section{The universal homogeneous finitely retractable graph and its envelope}

Consider the category $\catligraphs$ of all nonempty finite graphs with left-invertible embeddings.
It is easy to see that all the axioms of \fra\ theory are satisfied. Namely,
$\catligraphs$ has the amalgamation property, as the standard amalgamation (not adding any extra edges) provides left-invertible embeddings. Directedness follows from the amalgamation property, as the graph with a single vertex is a retract of every nonempty graph. Finally, the category has only countably many isomorphic types and all hom-sets are finite.

Let $\ciagi{\catligraphs}$ be the category of all colimits of sequences in $\catligraphs$. By Proposition~\ref{PROPSztyryDwa}, its objects are precisely the countable finitely retractable graphs. Let $\ufr$ denote the \fra\ limit of $\catligraphs$, which exists by the general \fra\ theory~\cite{DrGoe92, K_flims}. Translating everything to our context, we obtain the following result.

\begin{tw}\label{THMabootUFRowi}
	There exists a unique, up to isomorphism, countable, finitely retractable graph $\ufr$ with the following properties.
	\begin{enumerate}[itemsep=0pt]
		\item[{\rm(1)}] Given nonempty finite graphs $A \subs B$ such that $A$ is a retract of $B$, given a left-invertible embedding $\map e A \ufr$, there exists a left-invertible embedding $\map f B \ufr$ such that $f \rest A = e$.
		\item[{\rm(2)}] Every countable, finitely retractable graph embeds into $\ufr$ as a retract.
		\item[{\rm(3)}] Given finite retracts $A_0, A_1$ of $\ufr$, every isomorphism between $A_0$ and $A_1$ extends to an automorphism of $\ufr$. In particular, for every $a,b \in \ufr$ there exists an automorphism $\map h{\ufr}{\ufr}$ such that $h(a)=b$.
	\end{enumerate}
\end{tw}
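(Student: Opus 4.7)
The plan is to obtain this theorem by invoking the general categorical \fra\ theorem from \cite{K_flims, DrGoe92} applied to the category $\catligraphs$ introduced right before the statement. The preamble already verifies the axioms: amalgamation holds because the standard graph amalgam (not adding extra edges) admits left-invertible embeddings; the singleton is a retract of every nonempty graph, so directedness follows; and the category has only countably many iso-types with finite hom-sets. Hence a \fra\ limit $\ufr$ in $\catligraphs$ exists and is unique up to isomorphism. Since $\ufr$ arises as a colimit of a sequence in $\catligraphs$, Proposition~\ref{PROPSztyryDwa} gives us that $\ufr$ is a countable finitely retractable graph.

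For property (1), I would translate the categorical \fra\ injectivity into graph-theoretic language. The categorical limit has the extension property: for every $\catligraphs$-morphism $e \colon A \to \ufr$ and every $\catligraphs$-arrow $j \colon A \to B$, there is a $\catligraphs$-morphism $f \colon B \to \ufr$ with $f \cmp j = e$. Unpacking, a $\catligraphs$-arrow $A \to B$ is a left-invertible embedding, and its image is a retract of the codomain. When we take $j$ to be the inclusion $A \subs B$ (with $A$ a retract of $B$), this extension property is exactly the content of (1).

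Property (2) is the standard universality clause. Every object of $\ciagi{\catligraphs}$---that is, every countable finitely retractable graph $H$, by Proposition~\ref{PROPSztyryDwa}---admits a morphism into the \fra\ limit, and $\ciagi{\catligraphs}$-morphisms are by definition directed colimits of left-invertible embeddings and hence remain left-invertible embeddings. Concretely, I would pick a \fra\ sequence $\seq{G_n}$ with colimit $\ufr$, a witness sequence $\seq{H_n}$ for $H$, and build inductively left-invertible embeddings $H_n \to G_{k_n}$ whose square diagrams with $H_n \subs H_{n+1}$ and $G_{k_n} \subs G_{k_{n+1}}$ commute; applying (1) at each step provides the required extensions. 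The union of these embeddings is a left-invertible embedding of $H$ into $\ufr$, i.e.\ realizes $H$ as a retract.

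Property (3) is the back-and-forth consequence of (1). Given finite retracts $A_0, A_1 \subs \ufr$ and an isomorphism $\phi \colon A_0 \to A_1$, enumerate $\ufr$ and build inductively an $\omega$-chain of partial isomorphisms $\phi_k \colon B_k \to C_k$ between finite retracts of $\ufr$, starting from $\phi_0 = \phi$, such that the $k$-th vertex of the enumeration eventually belongs both to some $B_k$ and to some $C_k$. At the ``forth'' step, given the next vertex $v$, I use that $\ufr$ is finitely retractable to choose a finite retract $B_{k+1} \sups B_k \cup \sn v$ of $\ufr$; then $B_k$ is a retract of $B_{k+1}$ and $\phi_k^{-1} \colon C_k \to \ufr$ is a left-invertible embedding, so (1) provides an extension $C_{k+1} \to \ufr$, whose inverse on its image gives $\phi_{k+1}$. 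The ``back'' step is symmetric. The union $\bigcup_k \phi_k$ is the desired automorphism.

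The main subtlety I expect is keeping (1) in the correct direction during back-and-forth: at a forth step one must extend the \emph{inverse} partial map, which is why it is essential that property~(1) is formulated for left-invertible embeddings (so that $\phi_k^{-1}$ qualifies as one). A second minor point is ensuring that the finite sets $B_k, C_k$ chosen at each step really are retracts of $\ufr$ and not just arbitrary finite subsets; this is precisely where finite retractability of $\ufr$ is used, together with Theorem~\ref{THMretsFinrectsha} which guarantees that finite retracts compose sensibly.
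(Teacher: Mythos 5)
Your treatment of (1) and (3) is essentially the paper's: (1) is the extension property of the \fra\ limit of $\catligraphs$ unpacked into graph language, and (3) is the standard back-and-forth argument (your forth/back bookkeeping is slightly garbled --- at the forth step you should extend the left-invertible embedding $\phi_k\colon B_k\to\ufr$ with image $C_k$, not $\phi_k^{-1}$ --- but by symmetry this is harmless, and your observation that $B_k$ is automatically a retract of $B_{k+1}$ once both are retracts of $\ufr$ is correct).

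The genuine gap is in (2), at the sentence ``The union of these embeddings is a left-invertible embedding of $H$ into $\ufr$.'' Knowing that each $H_n$ embeds as a retract of $G_{k_n}$, with only the \emph{embedding} squares commuting, does not yield a retraction of $\ufr$ onto the copy of $H$. To define $\map{r}{\ufr}{H}$ by $r(x)=s_n(x)$ for $x\in G_{k_n}$, where $\map{s_n}{G_{k_n}}{H_n}$ is a chosen left inverse of the $n$-th embedding, you need the left inverses to cohere, i.e.\ $s_{n+1}\rest G_{k_n}=s_n$ up to the identifications; nothing in your inductive construction arranges this, and there is no reason the arbitrarily chosen retractions should stabilize pointwise. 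This is exactly the point the paper flags: a level-wise left-invertible morphism between two chains need not induce a left-invertible map between the colimits. The paper's remedy is to run the whole construction in the category $\catepgraphs$ of embedding-projection pairs $\pair ep$ and to verify \emph{proper} amalgamation (embeddings commute, projections commute, and the mixed squares of parallel embeddings and parallel projections commute), which holds by \cite[Lemma~6.6]{K_flims}. With the projections carried along, the maps $s_n$ are compatible by construction and their union is the desired retraction. So your argument for (2) must be upgraded to track the projections throughout the induction, not just the embeddings; as written it only proves that every countable finitely retractable graph embeds into $\ufr$, not that it embeds as a retract.
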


\begin{pf}[Sketch of proof.]
	(1) and (3) follow directly from the general \fra\ theory, see~\cite{K_flims}. The ``in particular'' part of (3) comes from the fact that a subgraph with one vertex is always a retract. Concerning (2), one needs to improve the setup by considering embedding-projections pairs, proving that the corresponding category has proper amalgamations, see~\cite[Section~6]{K_flims}. Without it, we would just obtain that every countable, finitely retractable graph is embeddable into $\ufr$, because in general the colimit of a sequence of left-invertible morphisms may not be left-invertible.
	Specifically, we are working with the category of all nonempty finite graphs with arrows being pairs of the form $\pair ep$, where $e,p$ are graph homomorphisms and $p \cmp e$ is an identity.
	Proper amalgamation means that given two embedding-projection pairs with the same domain, one can amalgamate them in such a way that the embeddings commute, the projections commute, and the ``mixed'' diagram consisting of parallel embeddings and parallel projections commutes as well.
	Our category has proper amalgamations, due to~\cite[Lemma~6.6]{K_flims}.
\end{pf}

What else can we say about this new graph? In Section~\ref{SectionRandomGraph} we shall learn that the random graph is finitely retractable, therefore it embeds into $\ufr$ as a retract. Hence, $\ufr$ is universal in the class of all countable graphs, namely, it contains an isomorphic copy of every countable graph.
On the other hand, retracts of $\ufr$ are precisely the nonempty countable finitely retractable graphs, due to Theorem~\ref{THMretsFinrectsha}.
Note that $\ufr$ is not isomorphic to the random graph, as it has infinitely many components, while $\rgraph$ is connected. Indeed, every countable graph with no non-trivial edges is a retract of $\ufr$. Furthermore, an infinite path is a retract, which shows that each component of $\ufr$ has infinite diameter, as retractions are non-expansive.

We now describe the envelope of $\ufr$.
Let $\catrigraphs$ denote the category of all nonempty finite graphs with right-invertible homomorphisms. This is a projective (inverse, or dual) version of the category above. Again, the amalgamation property holds trivially, the other axioms of the general \fra\ theory are satisfied, too. Thus we obtain the profinite \fra\ limit of $\catrigraphs$, which will be denoted by $\pufr$. More precisely:

\begin{tw}\label{THMaboutPufRha}
	There exists a unique, up to a topological isomorphism, profinite graph $\pufr$ with the following properties.
	\begin{enumerate}[itemsep=0pt]
		\item[{\rm(1)}] $\pufr$ is the limit of a sequence of right-invertible homomorphisms between finite nonempty graphs. 
		\item[{\rm(2)}] Given a right-invertible homomorphism $\map f BA$ between finite nonempty graphs, for every right-invertible continuous homomorphism $\map g{\pufr}B$ there exists a right-invertible continuous homomorphism $\map h{\pufr}A$ such that $f \cmp h = g$.
		$$\begin{tikzcd}
			A & & & \pufr \ar[lll, "g"'] \ar[dlll, dashed, "h"]\\
			B \ar[u, "f"] & & &
		\end{tikzcd}$$
		\item[{\rm(3)}] Let $K$ be a profinite graph that can be presented as the limit of an inverse sequence of right-invertible homomorphisms between finite graphs. Then there exist a topological graph embedding $\map e K \pufr$ and a continuous graph homomorphism $\map p \pufr K$ such that $p \cmp e = \id K$.
	\end{enumerate}
\end{tw}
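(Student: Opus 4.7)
The plan is to parallel the proof of Theorem~\ref{THMabootUFRowi}, working now in the dual category $\catrigraphs$ of nonempty finite graphs with right-invertible homomorphisms. First the projective \fra\ axioms are verified for $\catrigraphs$; existence, uniqueness, and property (2) then follow from the general results of~\cite{DrGoe92, K_flims}, with (1) being simply the existence of a \fra\ sequence whose inverse limit is $\pufr$. Property (3) is then handled by moving to a category of embedding-projection pairs, as was done for $\ufr$.

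For the axioms: hom-sets between finite graphs are finite and there are only countably many isomorphism types; the one-vertex loop is a retract of every nonempty finite graph (any vertex furnishes a section), so directedness follows from amalgamation. The key step is therefore amalgamation in $\catrigraphs$: given right-invertible homomorphisms $\map{f_j}{B_j}{A}$ with sections $\map{i_j}{A}{B_j}$ for $j=1,2$, one must produce $C$ and right-invertible $\map{g_j}{C}{B_j}$ with $f_1 \cmp g_1 = f_2 \cmp g_2$. Take the fibered product $C = \setof{(x,y)\in B_1 \times B_2}{f_1(x)=f_2(y)}$ with edge relation inherited coordinatewise, and let $g_j$ be the coordinate projections. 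The map $s_1(x) = (x, i_2(f_1(x)))$ is a graph homomorphism (composition of $i_2$ and $f_1$) and satisfies $g_1 \cmp s_1 = \id{B_1}$, witnessing right-invertibility of $g_1$; the section $s_2$ is constructed symmetrically. Nonemptiness of $C$ is immediate from the surjectivity of the $f_j$.

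The main obstacle is property (3): we need not only a continuous surjection $\pufr \to K$ for every such profinite $K$, but also a topological graph embedding of $K$ into $\pufr$ that is a section of this surjection. Following the strategy used for $\ufr$, the plan is to pass to the category of embedding-projection pairs, whose arrows are pairs $\pair pe$ with $\map pBA$ right-invertible and $\map eAB$ a fixed section so that $p \cmp e = \id A$. One must verify \emph{proper amalgamation} in this refined category: given two such pairs out of $A$, one can produce an amalgam in which the projection components commute, the embedding components commute, and the mixed squares also commute. The pullback construction of the previous paragraph can be refined so that the chosen sections combine compatibly with prescribed embeddings; \cite[Lemma~6.6]{K_flims} in fact supplies exactly this. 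Once proper amalgamation is in place, the general machinery of~\cite[Section~6]{K_flims} yields (3): every profinite graph that is an inverse limit along right-invertible homomorphisms embeds into $\pufr$ as a topological retract.
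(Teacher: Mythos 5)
Your proposal is correct and follows essentially the same route as the paper, which justifies Theorem~\ref{THMaboutPufRha} by appealing to the general \fra\ theory of \cite[Section~6]{K_flims} exactly as in the sketch for Theorem~\ref{THMabootUFRowi}, passing to the category $\catepgraphs$ of embedding-projection pairs and invoking \cite[Lemma~6.6]{K_flims} for proper amalgamation. Your explicit fibered-product verification of amalgamation in $\catrigraphs$ (with the section $s_1(x)=(x,i_2(f_1(x)))$) is a correct instantiation of what the paper dismisses as holding ``trivially,'' and it does in fact also yield the proper amalgamation of embedding-projection pairs.
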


The last property is typically called \emph{projective universality}, here restricted to a suitable class of profinite graphs. Graph $\pufr$ has also a \emph{projective homogeneity}, restricted to right-invertible continuous homomorphisms, namely, given an isomorphism $\map h{F_0}{F_1}$ between finite graphs, given right-invertible continuous homomorphisms $\map {f_i} \pufr {K_i}$, $i=0,1$, there is a topological graph automorphism $\map {\tilde h} \pufr \pufr$ such that the following diagram is commutative.
$$\begin{tikzcd}
	K_0 \ar[d, "h"', "\iso"] \ar[rr, "f_0"] & & \pufr \ar[d, "\tilde{h}", "\iso"'] \\
	K_1 \ar[rr, "f_1"'] & & \pufr
\end{tikzcd}$$
Theorem~\ref{THMaboutPufRha} can be justified in the same way as~\ref{THMabootUFRowi}, using the methods of~\cite[Section 6]{K_flims}.
Actually, as mentioned in the proof of Theorem~\ref{THMabootUFRowi}, these methods use the category $\catepgraphs$ of embedding-projection pairs between finite nonempty graphs. In particular, both $\ufr$ and $\pufr$ can be obtained at the same time, from a \fra\ sequence in $\catepgraphs$. Finally, we have:

\begin{tw}\label{THMPufrKoperta}
	$\pufr$ is an envelope of $\ufr$. More precisely, there exists a chain $\ciag F$ of nonempty finite induced subgraphs of $\pufr$, together with a sequence $\ciag r$ of continuous retractions of $\pufr$ with the following properties.
	\begin{enumerate}[itemsep=0pt]
		\item[{\rm(1)}] For each $\ntr$, $r_n$ is a graph homomorphism onto $F_n$.
		\item[{\rm(2)}] $r_n \cmp r_m = r_{\min(n,m)}$ for every $n,m \in \nat$.
		\item[{\rm(3)}] $\bigcup_{\ntr}F_n$ is isomorphic to $\ufr$.
	\end{enumerate}
\end{tw}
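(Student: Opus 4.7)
The plan is to derive both $\ufr$ and $\pufr$ simultaneously from a single \fra\ sequence in the category $\catepgraphs$ of embedding-projection pairs between nonempty finite graphs, as sketched in the proofs of Theorems~\ref{THMabootUFRowi} and~\ref{THMaboutPufRha}. I would fix such a \fra\ sequence $(F_n, \pair{e_n}{p_n})_{\ntr}$, where $\map{e_n}{F_n}{F_{n+1}}$ is an embedding, $\map{p_n}{F_{n+1}}{F_n}$ is a quotient homomorphism, and $p_n \cmp e_n = \id{F_n}$. Forming the iterated composites $e_n^m$ and $p_n^m$ for $n < m$, the colimit along the $e$-maps realizes $\ufr$ (by Theorem~\ref{THMabootUFRowi}), while the inverse limit along the $p$-maps realizes $\pufr$ (by Theorem~\ref{THMaboutPufRha}).

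These two limits are linked by a canonical topological embedding $\map{\iota}{\ufr}{\pufr}$: a point $x \in F_n$ is sent to the thread whose $k$-th coordinate is $e_n^k(x)$ for $k \goe n$ and $p_k^n(x)$ for $k < n$; the compatibility $p_n \cmp e_n = \id{F_n}$ ensures this thread is coherent. After this identification, each $F_n$ becomes a finite induced subgraph of $\pufr$, producing the required chain, and density of $\bigcup_{\ntr} F_n$ in $\pufr$ is an instance of Proposition~\ref{PROPdensitySixkm}. I then set $r_n := \iota \cmp \pi_n$, where $\map{\pi_n}{\pufr}{F_n}$ is the canonical projection from the inverse limit. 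Each $r_n$ is a continuous graph homomorphism with image $F_n$ and restricts to the identity on $F_n$, hence a retraction, giving~(1). For the coherence law~(2), when $n \loe m$ the relation $\pi_n = p_n^m \cmp \pi_m$ together with $\pi_m \cmp \iota = \id{F_m}$ yields
$$r_n \cmp r_m = \iota \cmp \pi_n \cmp \iota \cmp \pi_m = \iota \cmp p_n^m \cmp \pi_m = \iota \cmp \pi_n = r_n,$$
while for $n \goe m$ the image of $r_m$ lies in $F_m \subs F_n$, on which $r_n$ is the identity, so $r_n \cmp r_m = r_m$.

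The main obstacle is~(3), namely that the ascending union $\bigcup_{\ntr} F_n$, now viewed inside $\pufr$, is genuinely isomorphic to $\ufr$ rather than merely some countable finitely retractable graph containing all the $F_n$'s. To handle this I would verify the extension property~(1) of Theorem~\ref{THMabootUFRowi} for $\bigcup_{\ntr} F_n$ directly: given nonempty finite graphs $A \subs B$ with $A$ a retract of $B$, and a left-invertible embedding $A \hookrightarrow \bigcup_{\ntr} F_n$, its image lies in some $F_k$, and the defining cofinality property of the \fra\ sequence in $\catepgraphs$ --- which rests on the proper amalgamation of embedding-projection pairs established in~\cite[Lemma~6.6]{K_flims} --- produces some $m \goe k$ together with a factorization of the corresponding embedding-projection pair through $F_m$, yielding the desired extension $B \to F_m \subs \bigcup_{\ntr} F_n$. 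By the uniqueness statement in Theorem~\ref{THMabootUFRowi}, this forces $\bigcup_{\ntr} F_n \iso \ufr$, completing the proof.
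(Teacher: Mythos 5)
Your proposal is correct and follows essentially the same route as the paper: both derive $\ufr$ and $\pufr$ simultaneously from a single \fra\ sequence in $\catepgraphs$, obtaining the chain $\ciag F$ from the embedding component, the retractions $r_n$ from the limit projections composed with the canonical inclusion (with the coherence law (2) exactly as in Proposition~\ref{PROPSztyryDwa}), and property (3) from the fact that forgetting the projection component yields a \fra\ sequence in $\catligraphs$. Your write-up merely fills in details that the paper's sketch leaves implicit.
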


\begin{pf}[Sketch of proof]
	Concerning (2), see Proposition~\ref{PROPSztyryDwa} and its proof. The remaining properties follow from the fact that, given a \fra\ sequence in $\catepgraphs$, forgetting one of the components (either the embeddings or the epimorphisms), we obtain a \fra\ sequence in $\catrigraphs$ or $\catligraphs$. Hence the colimit of its ``embedding'' component is $\ufr$ and the limit of its ``projection'' component is $\pufr$.
\end{pf}

\begin{uwgi}
	One can also consider a connected variant of the story described above, namely, restricting the objects to connected graphs. The standard (free) amalgamation works and every retract of a connected graph is connected. Denote by $\ufrctd$ and $\pufrctd$ the corresponding \fra\ limits, the discrete one and the profinite one. It is easy to see that $\ufrctd$ is isomorphic to any of the infinitely many components of $\ufr$ (just checking the connected variant of Theorem~\ref{THMabootUFRowi}(1)).
	An analog of Theorem~\ref{THMPufrKoperta} holds, however $\pufrctd$ is no longer connected, as the result below shows. 
\end{uwgi}

It turns out that $\pufr$ is not homogeneous, in fact its automorphism group (consisting of all topological graph automorphisms) does not act transitively on $\pufr$. The same is true for its ``connected" counterpart $\pufrctd$.
This is an immediate consequence of the next result.

\begin{tw}\label{THMieuhrouwrgouwg}
	Each of the graphs $\pufr$ and $\pufrctd$ has a dense $G_\delta$ set of isolated vertices.
\end{tw}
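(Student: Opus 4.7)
My plan is to apply Lemma~\ref{THMKriterionIsoVerticess}, mirroring the argument already used for Theorem~\ref{THMhuwvwe}. Since $\pufr$ is the limit of an inverse sequence of right-invertible homomorphisms between finite graphs, its topology has a basis consisting of clopen sets of the form $f^{-1}(a)$, where $\map f \pufr F$ is a continuous right-invertible homomorphism onto a finite graph $F$ and $a \in F$. It therefore suffices, given any nonempty open $U \subs \pufr$, to produce a nonempty open $W \subs U$ admitting no edges to $\pufr \setminus U$; this is condition (IV) of Lemma~\ref{THMKriterionIsoVerticess}.

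For $\pufr$, I would fix such $f$ and $a$ with $f^{-1}(a) \subs U$, and form a new finite graph $G$ by adding to $F$ a new vertex $b$ carrying only a loop. Let $\map q G F$ be the identity on $F$ and send $b$ to $a$. Then $q$ is a right-invertible homomorphism between finite nonempty graphs, with the inclusion $F \hookrightarrow G$ as a section. Applying Theorem~\ref{THMaboutPufRha}(2) yields a continuous right-invertible $\map h \pufr G$ with $q \cmp h = f$; putting $W = h^{-1}(b)$ gives a clopen set contained in $f^{-1}(q(b)) = f^{-1}(a) \subs U$. $W$ is nonempty because any section of $h$ sends $b$ into $W$. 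Finally, if $w \in W$ were adjacent to some $v \in \pufr$, then $b = h(w)$ would be adjacent in $G$ to $h(v)$; since $b$ is isolated in $G$, this forces $h(v) = b$, i.e., $v \in W$. Hence (IV) holds.

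The main subtlety is the connected variant $\pufrctd$, where the auxiliary graph $G$ must itself be connected (so we cannot simply attach an isolated vertex). I would instead let $G = F \cup \sn b$ with the extra edge $\{a, b\}$ (and a loop at $b$). Then $G$ is connected and nonempty; defining $\map q G F$ as before, the edge $\{a,b\}$ is sent to the loop at $a$, so $q$ is a right-invertible homomorphism in the connected category, with section the inclusion. The analog of Theorem~\ref{THMaboutPufRha}(2) for $\pufrctd$ then produces a continuous right-invertible $\map h \pufrctd G$ with $q \cmp h = f$, and $W = h^{-1}(b)$ is again a nonempty clopen subset of $U$. Now if $w \in W$ is adjacent to $v \in \pufrctd$, then $h(v)$ is adjacent to $b$ in $G$, so $h(v) \in \dn a b$; in either case $f(v) = q(h(v)) = a$, hence $v \in f^{-1}(a) \subs U$. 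This verifies (IV), and Lemma~\ref{THMKriterionIsoVerticess} finishes the proof. The only real work is choosing the enlargement $G$ correctly in the connected case so that the neighbors of $b$ are pulled back into $U$; once that is set up, the rest is a direct application of the lifting property.
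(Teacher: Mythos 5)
Your proof is correct and follows essentially the same route as the paper: verify condition (IV) of Lemma~\ref{THMKriterionIsoVerticess} by lifting a one-vertex extension of a finite quotient via Theorem~\ref{THMaboutPufRha}(2) and taking $W$ to be the preimage of the new vertex. The only (harmless) difference is that the paper uses the pendant-vertex construction uniformly for both $\pufr$ and $\pufrctd$, whereas you split into two cases, using an isolated vertex for $\pufr$; both variants work.
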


\begin{pf}
	We check condition (IV) of Lemma~\ref{THMKriterionIsoVerticess}.
	Namely, fix a nonempty open set $U \subs \pufr$ (respectively, $\pufrctd$) and choose a right-invertible continuous epimorphism $\map g \pufr A$ (resp. $\map g \pufrctd A$) onto a finite (connected) graph, such that $g^{-1}(v) \subs U$ for some $v \in A$.
	
	Let $B = A \cup \sn w$, where $w \notin A$ is adjacent to $v$ and to no other vertex of $A$. If $A$ was connected, so is $B$.
	Let $\map r BA$ be such that $r(w)=v$ and $r\rest A = \id A$. Clearly, $r$ is a retraction.
	By Theorem~\ref{THMaboutPufRha}(1), there is a right-invertible continuous homomorphism $\map h \pufr B$ (resp. $\map h \pufrctd B$) such that $g =  r \cmp h$.
	Let $W = h^{-1}(w)$. Then
	$$W \subs h^{-1}\dn vw = h^{-1} r^{-1}(v) = {(r \cmp h)}^{-1}(v) = g^{-1}(v) \subs U.$$
	If $x \in \pufr \setminus U$ (resp. $\pufrctd \setminus U$) then $g(x) \ne v$, therefore $h(x) \notin \dn vw$ and consequently $h(x)$ is not adjacent to $w$.
	Hence, $x$ is adjacent to no point in $W = h^{-1}(w)$. This shows that there are no adjacencies between $\pufr \setminus U$ (resp. $\pufrctd \setminus U$) and $W$, proving (IV) of Lemma~\ref{THMKriterionIsoVerticess}.	
\end{pf}

We now see that the limit of an inverse sequence of retractions between finite connected graphs may be a disconnected graph, with many isolated vertices. On the other hand, it always contains a topologically dense countable connected graph, namely, the union of the corresponding chain (see Proposition~\ref{PROPdensitySixkm}).

\section{Graph evolutions and sociable graphs}

We now discuss a natural way of building a graph step-by-step, starting from a single vertex. Just adding a new vertex is fairly general and can lead to any graph. Duplicating a fixed vertex is another way, however, it might be too restrictive, as long as we agree to keep the existing connections. We propose to look at processes where at each step a new vertex is first duplicated from an existing one and then some of the connections break, as visualized in the following figure.

$$\begin{tikzpicture}[every node/.style={circle,inner sep=2pt}]
				\node[fill=black] (a0) at (0,1) {};
				\node[fill=red] (v) at (1,1) {};
				\node[fill=black] (a1) at (2,2) {};
				\node[fill=black] (a2) at (2,0) {};
				\draw (a0) -- (v);
				\draw (a1) -- (v);
				\draw (a2) -- (v);
				\draw (a1) -- (v);
				
				\node[fill=white] (fake1) at (2.5,1) {};
				\node[fill=white] (fake2) at (5.5,1) {};
				\draw[->,dashed]      (fake1) -- (fake2);
				\node at (4,1.3) {duplication}; 
				
				\node[fill=black] (b0) at (6,1) {};
				\node[fill=red] (v1) at (7,1) {};
				\node[fill=black] (b1) at (8,2) {};
				\node[fill=black] (b2) at (8,0) {};
				\node[fill=brown] (v2) at (7,3) {};
				\draw (b0) -- (v1);
				\draw (b1) -- (v1);
				\draw (b2) -- (v1);
				\draw (b0) -- (v2);
				\draw (b1) -- (v2);
				\draw (b2) -- (v2);
				\draw (v2) -- (v1);
				
				\node[fill=white] (fake1) at (8.5,1) {};
				\node[fill=white] (fake2) at (11.5,1) {};
				\draw[->,dashed]      (fake1) -- (fake2);
				\node at (10,1.3) {breaking}; 
				
				\node[fill=black] (c0) at (12,1) {};
				\node[fill=red] (v3) at (13,1) {};
				\node[fill=black] (c1) at (14,2) {};
				\node[fill=black] (c2) at (14,0) {};
				\node[fill=brown] (v4) at (13,3) {};
				\draw (c0) -- (v3);
				\draw (c1) -- (v3);
				\draw (c2) -- (v3);
				\draw (c0) -- (v4);
				\draw (c2) -- (v4);
\end{tikzpicture}$$
Note that mapping the brown vertex to the red one is a retraction. The edges between the red vertex and the black ones have to be preserved, while some of the edges resulting from the duplication can disappear.

Another possible motivation for considering processes as above are social (perhaps authoritarian) networks, where a new member $w$ can be added only if she/he is invited by a specified member $v$ of the existing network $G$ and this can happen only if the new member $w$ does not have connections (e.g. friends, or some other dependencies, encoded by the edge relation) outside of the neighborhood of $v$. In other words, if $x$ is connected to $w$ and belongs to the existing network, then $x$ is connected to $v$. Mapping $w$ to $v$ provides a natural retraction from $G \cup \sn w$ to $G$. In this example, one should perhaps require that $w$ be connected to $v$, that is, $v$ is not willing to invite anyone who is not already connected to her/him.

Iterating the steps as above, starting from the simplest nonempty graph, we obtain the concept of a \emph{graph evolution}. Evolutions as above might possibly serve as models of some physical, social, or biological processes.

\separator

It should be clear that both examples give particular cases of embedding-projection pairs $\pair ep$, where $\map e GH$ is the inclusion $G \subs H$, $H = G \cup \sn w$, and $p$ is a retraction, mapping $w$ to a specific vertex $v \in G$. In the first example, $v$ was first duplicated to $w$ and then certain connections were removed, while in the second example $v$ ``invites'' a new vertex $w$, whose neighborhood is contained in that of $v$.
Below is the precise definition.

\begin{df}\label{DEFonebyOneTwoTree}
	 (1) Given a finite nonempty graph $G$, a \emph{transition} from $G$ to a graph $H$ is an embedding-projection pair $\map{\pair ep}{G}{H}$ (so $\map eGH$ is a graph embedding, $\map pHG$ is a graph homomorphism and $p \cmp e = \id G$) such that $H \setminus \img eG$ is either empty (then it is called \emph{trivial}) or consists of a single vertex $w$ (then $\pair ep$ is \emph{nontrivial}). 
	 
	 If either $\pair ep$ is trivial or the unique vertex $w \in H \setminus \img eG$ is connected to $e(p(w))$, we will say that $\pair ep$ is \emph{sociable}.
	 
	 (2) We shall denote by $\origin$ the graph with a single vertex. This is the \emph{origin} of all the graph evolutions.
	 
	 (3) A \emph{graph evolution} will be any composition of transitions, namely, an embedding-projection pair resulting from a sequence
	 $$\begin{tikzcd}
	 	G = G_0\arrow[r, hook,shift right=1.5, "e_{0}"']& G_1\arrow[l, shift right=1.5, "p_{0}"']\arrow[r,hook,shift right=1.5, "e_{1}"']&  G_2\arrow[l, shift right=1.5, "p_{1}"']\dots G_{n-1}\arrow[r,hook,shift right=1.5,"e_{n-1}"']&\arrow[l, shift right=1.5,"p_{n-1}"']G_n = H,
	 \end{tikzcd}$$
	 where each $\pair{e_i}{p_i}$ is a transition. Formally, this composition is $$\pair{e_{n-1} \cmp \dots \cmp e_0}{p_{0} \cmp \dots \cmp p_{n-1}}.$$
	 An \emph{infinite graph evolution} is defined similarly, the main difference is that $H$ becomes infinite (if each $e_i$ is the inclusion $G_i \subs G_{i+1}$, then $H = \bigcup_{\ntr}G_n$) and on the ``projection" one can consider the inverse limit---the envelope of $H$.
	 A (finite) graph evolution $f = \pair ep$ from $G$ to $H$ will be denoted by $\ewa f G H$.
	 
	 (4) An evolution will be called \emph{sociable} if it is formed by sociable transitions. A graph $W$ (finite or countable infinite) will be called \emph{sociable} if there exists a sociable evolution from the origin $\origin$ to $W$.
\end{df}

\begin{example}
	\label{ExampleEvolution} Let us illustrate the concept of a graph evolution. 
	\begin{equation*}
		\begin{tikzpicture}[every node/.style={circle,inner sep=2pt}]
			
			\node[fill=black] (a0) at (-1,1) {};
			\node at (-1,-0.7) {$G_1$}; 
			
			\node[fill=black] (a1) at (0,1) {};
			\node[fill=black] (b1) at (1,1) {};  
			\node at (0.5,-0.7) {$G_2$};

			\draw[->] (b1) -- (a1);
			
			\node[fill=black] (a2) at (2,1) {};
			\node[fill=black] (b2) at (3,1) {};  
			\node[fill=black] (c2) at (2,0) {};    
			\node at (2.5,-0.7) {$G_3$}; 
			
			\draw (a2) -- (b2)
			(b2) -- (c2);
			\draw[->]      (c2) -- (a2);
			
			\node[fill=black] (a3) at (4,1) {};
			\node[fill=black] (b3) at (5,1) {};  
			\node[fill=black] (c3) at (4,0) {};
			\node[fill=black] (d3) at (6,2) {};      
			\node at (5,-0.7) {$G_4$}; 
			
			\draw (a3) -- (b3)
			(b3) -- (c3)
			(c3) -- (a3);
			\draw[->]      (d3) -- (b3);	
			
			\node[fill=black] (a4) at (7,1) {};
			\node[fill=black] (b4) at (8,1) {};  
			\node[fill=black] (c4) at (7,0) {};
			\node[fill=black] (d4) at (9,2) {};
			\node[fill=black] (e4) at (7,2) {};        
			\node at (8,-0.7) {$G_5$};
			
			\draw (a4) -- (b4)
			(b4) -- (c4)
			(c4) -- (a4)
			(d4) -- (b4)
			(e4) -- (a4)
			(e4) -- (d4);
			\draw[->,dashed]      (e4) -- (b4);
			
			\node[fill=black] (a5) at (10,1) {};
			\node[fill=black] (b5) at (11,1) {};  
			\node[fill=black] (c5) at (10,0) {};
			\node[fill=black] (d5) at (12,2) {};
			\node[fill=black] (e5) at (10,2) {};        
			\node[fill=black] (f5) at (12,0) {};        
			\node at (11,-0.7) {$G_6$};
			
			\draw (a5) -- (b5)
			(b5) -- (c5)
			(c5) -- (a5)
			(d5) -- (b5)
			(e5) -- (a5)
			(e5) -- (d5)
			(f5) -- (c5)
			(f5) -- (d5);
			\draw[->,dashed]      (f5) -- (b5);	
			
		\end{tikzpicture}
	\end{equation*}
	Graph $G_i$ embeds in $G_{i+1}$ in the obvious way. The arrows show how $G_{i+1}$ projects onto $G_i$. Note that $G_6$ contains a graph isomorphic to $C_5$.
	By Proposition~\ref{PROPCykleCycles} below, we know that $C_5$ is not PPR. This shows that a PPR finite graph can contain $C_5$ as an induced subgraph.
		
	Note that $G_6$ can also be decomposed back as follows
	\begin{equation*}
		\begin{tikzpicture}[every node/.style={circle,inner sep=2pt}]
			
			\node[fill=black] (a6) at (0,1) {};
			\node[fill=black] (b6) at (1,1) {};  
			\node[fill=black] (c6) at (0,0) {};
			\node[fill=black] (d6) at (2,2) {};
			\node[fill=black] (e6) at (0,2) {};        
			\node[fill=black] (f6) at (2,0) {};        
			\node at (1,-0.7) {$G_6$};
			
			\draw (a6) -- (b6)
			(b6) -- (c6)
			(c6) -- (a6)
			(d6) -- (b6)
			(e6) -- (a6)
			(e6) -- (d6)
			(f6) -- (c6)
			(f6) -- (d6);
			\draw[->,dashed]      (e6) -- (b6);

			\node[fill=black] (a5) at (3,1) {};
			\node[fill=black] (b5) at (4,1) {};  
			\node[fill=black] (c5) at (3,0) {};
			\node[fill=black] (d5) at (5,2) {};
			\node[fill=black] (f5) at (5,0) {};        
			\node at (4,-0.7) {$\hat{G}_5$};
			
			\draw (b5) -- (c5)
			(c5) -- (a5)
			(d5) -- (b5)
			(f5) -- (c5)
			(f5) -- (d5);
			\draw[->]      (a5) -- (b5);

			\node[fill=black] (b4) at (7,1) {};  
			\node[fill=black] (c4) at (6,0) {};
			\node[fill=black] (d4) at (8,2) {};
			\node[fill=black] (f4) at (8,0) {};        
			\node at (7,-0.7) {$\hat{G}_4$};
			
			\draw (b4) -- (c4)
			(d4) -- (b4)
			(f4) -- (c4)
			(f4) -- (d4);
			\draw[->,dashed]      (f4) -- (b4);

			\node[fill=black] (b3) at (10,1) {};  
			\node[fill=black] (c3) at (9,0) {};
			\node[fill=black] (d3) at (11,2) {};
			\node at (10,-0.7) {$\hat{G}_3$};
			
			\draw (b3) -- (c3)
			(d3) -- (b3);
			\draw[->,dashed]      (d3) -- (b3);

			\node[fill=black] (b2) at (13,1) {};  
			\node[fill=black] (c2) at (12,0) {};
			\node at (12.5,-0.7) {$\hat{G}_2$};
			
			\draw[->]      (b2) -- (c2);

			\node[fill=black] (c2) at (14,0) {};
			\node at (14,-0.7) {$\hat{G}_1$};
			
		\end{tikzpicture}
	\end{equation*}
	A natural question arises here: if one chooses a way of decomposing the graph, different from the original one, is it possible to get stuck? More formally, suppose that $G$ is finite and PPR, $\map p G {G\setminus\{v\}}$ is a retraction; is $G\setminus\{v\}$ PPR as well? An affirmative answer to this question follows from Theorem~\ref{THMRetraktyFinitoPoPaPr}, giving us an algorithm for checking whether a given finite graph is point-by-point retractable. We shall see in a moment that the same applies to sociable graphs.	
\end{example}

Graph evolutions might possibly model some physical, social, or biological processes.

Definition~\ref{DEFonebyOneTwoTree} actually gives us three classes of countable graphs, those obtained by evolutions from $\origin$, the connected ones, and the sociable ones. Note that disconnected graphs are obtained by evolutions in which at least one of the transitions adds an isolated vertex---this is because the union of any chain of connected graphs is connected.

It is rather obvious that every sociable graph is connected, as at each step we are adding a vertex adjacent to one of the existing ones. The following fact follows from the definition of transitions and Proposition~\ref{PROPSztyryDwa}.

\begin{prop}
	A nonempty countable graph $G$ is point-by-point retractable if and only if there exists a graph evolution from $\origin$ to $G$.
\end{prop}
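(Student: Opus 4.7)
The plan is to show that the two definitions are essentially translations of each other; the equivalence is almost a tautology once the notation is aligned, but a few bookkeeping details must be handled carefully.

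For the forward implication, I would start with a nonempty countable PPR graph $G$ and invoke its defining witness: a chain $X_0 \subs X_1 \subs \cdots$ with $|X_0|=1$, $|X_{n+1}\setminus X_n|\loe 1$, $\bigcup_{\ntr}X_n=G$, and a fixed retraction $\map{r_n^{n+1}}{X_{n+1}}{X_n}$ for each $n$. I would then set $G_n = X_n$, take $e_n$ to be the inclusion $X_n \subs X_{n+1}$, and take $p_n = r_n^{n+1}$. Since $r_n^{n+1}$ is a retraction onto $X_n$, we have $p_n\cmp e_n = \id{X_n}$, so each $\pair{e_n}{p_n}$ is an embedding-projection pair. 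The cardinality condition on $X_{n+1}\setminus X_n$ ensures that each transition is trivial or nontrivial in the sense of Definition~\ref{DEFonebyOneTwoTree}(1). Since $X_0$ is a one-vertex graph, hence isomorphic to $\origin$, the concatenation of these transitions is an (infinite, if $G$ is infinite) graph evolution from $\origin$ to $G$ in the sense of Definition~\ref{DEFonebyOneTwoTree}(3).

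For the backward implication, I would assume a graph evolution from $\origin$ to $G$, given by a (finite or infinite) sequence of transitions $\ewa{\pair{e_n}{p_n}}{G_n}{G_{n+1}}$ with $G_0 = \origin$. Using the embeddings $e_n$, I would replace each $G_n$ by its image under $e_{n-1}\cmp\cdots\cmp e_0$ inside $G$; this is legitimate because each $e_n$ is a graph embedding, so it identifies $G_n$ with an induced subgraph of $G_{n+1}$, and consequently (by composition) with an induced subgraph $X_n$ of $G$. Then $X_0\subs X_1\subs\cdots$, $|X_0|=1$, $|X_{n+1}\setminus X_n|\loe 1$ by the definition of a transition, and $\bigcup_{\ntr}X_n = G$ by construction. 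The projection $p_n$, transported through the identification, becomes a retraction of $X_{n+1}$ onto $X_n$, so $X_n$ is a retract of $X_{n+1}$. This is exactly the definition of $G$ being point-by-point retractable.

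The only real subtlety, which I would mention explicitly, is that the embeddings $e_n$ in an evolution are not required to be literal inclusions, so one must pass to isomorphic copies before reading off the PPR chain; this is routine since being PPR is clearly an isomorphism invariant. Equally routine but worth stating is that a trivial transition corresponds to the case $X_{n+1}=X_n$ with the identity retraction, so the presence of trivial transitions causes no problem. Beyond these small matters, no genuine obstacle arises: the proposition simply rephrases Definition~\ref{DEFonebyOneTwoTree} in the language introduced earlier for PPR structures.
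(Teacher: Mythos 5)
Your proof is correct and is essentially the argument the paper has in mind: the paper omits a proof entirely, remarking only that the statement ``follows from the definition of transitions and Proposition~\ref{PROPSztyryDwa},'' and your direct translation between the PPR chain $X_0\subs X_1\subs\cdots$ with retractions and the sequence of embedding-projection transitions is exactly that routine verification. The two bookkeeping points you flag (transporting along the embeddings $e_n$, and padding a finite evolution with trivial transitions) are the right ones to mention.
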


The class of sociable graphs is strictly smaller than the class of connected PPR graphs, as the following easy fact shows.

\begin{prop}\label{PROPCykleCycles}
	Let $k>2$ be a natural number. The $k$-element cycle $C_k$ is PPR if and only if $k<5$. It is sociable if and only if $k=3$.
\end{prop}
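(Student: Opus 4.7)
My plan is to verify the three sub-claims in turn: $C_3$ is sociable; $C_4$ is PPR but not sociable; and $C_k$ is not PPR (hence not sociable) for every $k\goe 5$.

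For $k=3$, note that $C_3$ is the complete graph on three vertices. I build a sociable evolution explicitly. Start from $\origin=\{v_0\}$; add $v_1$ together with the edge $v_0v_1$ and take the retraction $v_1\mapsto v_0$. This is a sociable transition because $v_1$ is adjacent to its image $v_0$. Next, add $v_2$ with edges to both $v_0$ and $v_1$ and take the retraction $v_2\mapsto v_0$. The map is a graph homomorphism because $v_0$ has a loop and $v_0v_1$ is an edge, and the transition is sociable because $v_2$ is adjacent to $v_0$ in $C_3$.

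For $k=4$, denote the vertices cyclically by $v_0, v_1, v_2, v_3$. To see $C_4$ is PPR, define $r\colon C_4 \to C_4$ by $r(v_0) = v_2$ and $r(v_i) = v_i$ for $i\in\{1,2,3\}$. Since both $v_2v_1$ and $v_2v_3$ are edges of $C_4$, the map $r$ is a graph homomorphism, and its image is the induced subgraph on $\{v_1,v_2,v_3\}$, which is a $3$-vertex path. Such a path is PPR by collapsing an endpoint to its unique non-loop neighbor and iterating. For the failure of sociability, observe that every $3$-vertex induced subgraph of $C_4$ is a $3$-vertex path. Hence the last nontrivial transition of any evolution from $\origin$ to $C_4$ adds a single vertex $v$ to such a $3$-vertex path together with a retraction $r$. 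The homomorphism condition forces $r(v)$ to be adjacent in the path to both neighbors of $v$ in $C_4$, which are the two endpoints of the path; their unique common neighbor in the path is the middle vertex $m$. But in $C_4$ the vertex $v$ is opposite to $m$, so $v$ and $m$ are not adjacent, violating the sociability requirement that $v$ be adjacent to $r(v)$.

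For $k\goe 5$, suppose for contradiction that $C_k$ is PPR. Applying the definition at the final step of a PPR decomposition, some single vertex $v\in C_k$ is removable, i.e.\ there exists a retraction of $C_k$ onto the induced subgraph $C_k\setminus\{v\}$, which is the path $P_{k-1}$ on $k-1$ vertices. Then $r(v)$ would have to be a common neighbor in $P_{k-1}$ of the two neighbors $u$ and $w$ of $v$ in $C_k$. However, $u$ and $w$ are the endpoints of $P_{k-1}$, and their graph distance is $k-2\goe 3$, so they have no common neighbor. This contradiction shows $C_k$ is not PPR, and since sociable graphs are PPR, $C_k$ is not sociable either. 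The only subtle point in the whole argument is the last-step reduction in the $C_4$ non-sociability case: one has to verify that \emph{no} evolution, regardless of the intermediate graphs, can conclude sociably, and this is handled by the structural fact that every $3$-vertex induced subgraph of $C_4$ is the same $3$-vertex path.
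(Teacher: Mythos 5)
Your proposal is correct and follows essentially the same route as the paper's proof: the non-existence of a retraction of $C_k$ onto the path $C_k\setminus\{v\}$ for $k\goe 5$, the forced (unique) retraction of $C_4$ onto a $3$-vertex path sending $v$ to the non-adjacent diagonal vertex, and an explicit sociable evolution for $C_3$. You simply spell out the details that the paper leaves as ``evident,'' including the useful observation that every $3$-vertex induced subgraph of $C_4$ is a path, which justifies the reduction to the last transition.
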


\begin{pf}
	The cycle $C_k$ with $k \goe 5$ does not admit any retraction onto $C_k \setminus \sn v$, where $v$ is any fixed vertex of $C_k$. The cycle $C_4$ is evidently PPR, however its unique retraction onto $C_4\setminus \sn v$, where $v \in C_4$, is not sociable, as it has to map $v$ to a vertex along the diagonal, not adjacent to $v$. Obviously, every nonempty graph with at most $3$ vertices, including $C_3$, is sociable.
\end{pf}

Thus, $C_4$ is the smallest connected PPR graph that is not sociable.
Adapting the proof of Theorem~\ref{THMRetraktyFinitoPoPaPr}, we obtain

\begin{tw}
	Every finite retract of a sociable graph is sociable.
\end{tw}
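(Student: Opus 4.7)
The plan is to adapt the inductive argument from Theorem~\ref{THMRetraktyFinitoPoPaPr}, adding one extra verification at the crucial step. We proceed by induction on $|A|$, where $A$ is a finite retract of some sociable graph $W$. Since $W$ is a union of a chain of finite sociable graphs joined by sociable transitions, and $A$ is finite, $A$ is already a retract of some finite sociable graph; this lets us choose a finite sociable $B \sups A$ of minimum cardinality admitting a retraction $\map rBA$. Unraveling the final transition of an evolution producing $B$, we write $B = B' \cup \sn v$ where $B'$ is sociable, $\map sB{B'}$ is a retraction fixing $B'$, and $v$ is adjacent to $s(v)$ in $B$. Minimality forces $v \in A$, since otherwise $r\rest{B'}$ would be a retraction from a smaller sociable graph onto $A$.

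Set $A' = A \setminus \sn v \subs B'$. Exactly as in the proof of Theorem~\ref{THMRetraktyFinitoPoPaPr}, one verifies that $r(s(v)) \in A'$: otherwise $r(s(v)) = v$, and then $s\rest A$ together with $r\rest C$ (for $C = A' \cup \sn{s(v)}$) would exhibit $A$ as isomorphic to a retract of $B'$, contradicting minimality. Hence $\rho := (r \cmp s)\rest A$ is a well-defined retraction of $A$ onto $A'$, and $\rho \cmp (r\rest{B'})$ is a retraction of $B'$ onto $A'$. Thus $A'$ is a retract of the smaller sociable graph $B'$ and, by the inductive hypothesis, $A'$ is sociable.

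The key additional point, beyond the PPR argument, is that the induced transition from $A'$ to $A$ is itself \emph{sociable}; equivalently, that $v$ is adjacent to $\rho(v) = r(s(v))$ in $A$. This is immediate: sociability of the $s$-transition provides the edge $\dn v{s(v)}$ in $B$, and since $r$ is a graph homomorphism with $r(v) = v$, the image $\dn v{r(s(v))}$ is an edge of $A$. Appending this sociable transition to a sociable evolution realising $A'$ yields a sociable evolution $\origin \to \dots \to A'\to A$. No combinatorial novelty appears beyond Theorem~\ref{THMRetraktyFinitoPoPaPr}; the main (and only) point to check is precisely this automatic preservation of the sociability witness across the homomorphism $r$.
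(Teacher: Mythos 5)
Your proof is correct and follows essentially the same route as the paper's: induction on cardinality via a minimal finite sociable extension $B = B' \cup \{v\}$, the argument from Theorem~\ref{THMRetraktyFinitoPoPaPr} showing $v \in A$ and $r(s(v)) \in A'$, and the observation that $r$, being a homomorphism fixing $v$, carries the edge $\{v, s(v)\}$ to the edge $\{v, r(s(v))\}$ that witnesses sociability of the final transition. The only difference is that you spell out the reduction to a finite ambient sociable graph and the composite retraction of $B'$ onto $A'$, which the paper leaves implicit.
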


\begin{pf}
	Fix a finite graph $G$, a retract of a sociable graph. We may assume that $G \subs H$, where $H$ is sociable of minimal cardinality such that there is a retraction $\map r H G$. Let $H = H' \cup \sn v$, where $H'$ is sociable and there is a retraction $\map s H H'$. By minimality, $v \in G$ and $r(s(v)) \in G'$, since otherwise, by the proof of Theorem~\ref{THMRetraktyFinitoPoPaPr}, we would get a contradiction with the minimality of $H$. Note that $r(s(v))$ is adjacent to $v$, because $s(v)$ is adjacent to $v$ ($s$ is part of a sociable transition) and $r$ preserves the edges.
	Finally, $G'$ is a retract of a smaller sociable graph $H'$, therefore by the inductive hypothesis it is sociable.
\end{pf}

\subsection{The most complicated sociable graph}

Let us look at the category of nonempty (connected) finite PPR graphs, namely, those for which there is an evolution from $\origin$ (in the connected case, consisting of connected subgraphs). Clearly, the free amalgamation of two transitions with the same domain consists of transitions (see~\cite[Lemma 6.6]{K_flims}), therefore by easy induction we infer that finite evolutions have the proper amalgamation property. Hence, the general \fra\ theory of embedding-projection pairs gives us a unique countable PPR graph $\uppr$ (and its connected variant $\upprctd$), characterized by the following extension property.
\begin{enumerate}
	\item[(E)] Given a finite (connected) PPR graph $A$ and an evolution $\ewa f A \uppr$ (resp. $\ewa f A \upprctd$), given a transition $A \subs B$ (where $B$ is connected), there exists an evolution from $B$ to $\uppr$ (resp. $\upprctd$) extending $f$.
\end{enumerate}
Furthermore, $\uppr$ and $\upprctd$ have similar properties as $\ufr$ and $\ufrctd$, namely, universality and a rather special variant of homogeneity. Universality says that every (connected) PPR graph is a retract of $\uppr$ (resp. $\upprctd$).
Again, it is easy to see that $\uppr$ has infinitely many components, each of them being isomorphic to $\upprctd$.

The graphs $\uppr$ and $\upprctd$ are obtained through the corresponding categories of embedding-projection pairs, therefore each of them has a natural envelope, denoted by $\puppr$ and $\pupprctd$, respectively.
Note that given a (connected) finite graph $A$ with a distinguished vertex $v$, there is a sociable transition from $A$ to $A \cup \sn w$ such that $w$ is adjacent precisely to $v$ (and to itself). Hence, the same argument as in the proof of Theorem~\ref{THMieuhrouwrgouwg} gives the following

\begin{tw}
	Each of the profinite graphs $\puppr$ and $\pupprctd$ has a dense $G_\delta$ set of isolated vertices.
\end{tw}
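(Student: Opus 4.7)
The plan is to mimic the proof of Theorem~\ref{THMieuhrouwrgouwg} almost verbatim, replacing the ambient projective \fra\ limit of right-invertible homomorphisms by the envelope coming from the category of (connected) PPR transitions. As before, Lemma~\ref{Gdelta} reduces the task to showing that the isolated vertices form a dense set, and for that it is enough to verify condition (IV) of Lemma~\ref{THMKriterionIsoVerticess}.

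So fix a nonempty open set $U \subs \puppr$ (respectively $\pupprctd$). Using the representation of $\puppr$ as the limit of a \fra\ sequence of embedding-projection pairs between finite (connected) PPR graphs — which is the content of the preceding discussion and the analog of Theorem~\ref{THMPufrKoperta} — I would choose a right-invertible continuous graph homomorphism $\map g {\puppr} A$ onto some finite (connected) PPR graph $A$ in the defining chain, together with a vertex $v \in A$ such that $g^{-1}(v) \subs U$. This is possible because the preimages of vertices under the canonical projections onto the $F_n$'s form a base for the topology.

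Next, I invoke the observation stated just before the theorem: letting $B = A \cup \sn w$ where the new vertex $w$ is adjacent only to $v$ (and itself), the inclusion $A \hookrightarrow B$ together with the retraction $\map r BA$ sending $w$ to $v$ is a sociable transition, and it is a transition in the connected PPR category whenever $A$ is connected. By the extension property (E) of $\uppr$ (resp.~$\upprctd$), transferred to the envelope via the standard \fra-theoretic lifting for embedding-projection pairs, $g$ extends to a right-invertible continuous homomorphism $\map h{\puppr}B$ (resp.~$\map h{\pupprctd}B$) satisfying $r \cmp h = g$.

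Setting $W = h^{-1}(w)$, we obtain a nonempty clopen set and the computation
$$W \subs h^{-1}\dn vw = h^{-1}r^{-1}(v) = g^{-1}(v) \subs U$$
shows $W \subs U$. If $x \in \puppr \setminus U$ then $g(x) \ne v$, hence $h(x) \notin \dn vw$, and since $w$ is adjacent in $B$ only to $v$ and itself, $h(x)$ is not adjacent to $w$; as $h$ is a graph homomorphism, $x$ is not adjacent to any point of $W$. This verifies (IV), and the proof concludes via Lemma~\ref{THMKriterionIsoVerticess}. The only nonroutine ingredient is the projective extension step, which is why the sociability of the transition $A \subs B$ is important: it guarantees that the lifting lives in the correct category of embedding-projection pairs, so that it can actually be realized inside the envelope of the PPR \fra\ sequence.
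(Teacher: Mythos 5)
Your proposal is correct and follows essentially the same route as the paper, which simply observes that the pendant-vertex extension $A \subs A \cup \sn w$ (with $w$ adjacent precisely to $v$) is a sociable, hence PPR and connectedness-preserving, transition and then runs the argument of Theorem~\ref{THMieuhrouwrgouwg} verbatim. Your added remarks about why the lifting lives in the correct category and why $W$ is nonempty are exactly the right points to check.
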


Now, let us look at the category of sociable graphs. Here, the evolutions consist of sociable transitions and again the free amalgamation works, namely it consists of sociable transitions.
Thus, the general theory provides a countable graph $\soc$, characterized uniquely by a sociable variant of the extension property (E).
The same argument as above shows that its natural envelope $\psoc$ has a dense $G_\delta$ set of isolated vertices.
Actually, even restricting to the simplest possible sociable transitions we obtain a profinite graph with a dense set of isolated vertices. This is explained below.

\begin{example}
	A sociable transition from a graph $A$ to a graph $B$ will be called \emph{simple} if the new vertex $w \in B \setminus A$ (assuming the embedding is inclusion) is adjacent to a single vertex in $A$.
	Starting from the one-vertex graph $\origin$ and using compositions of simple transitions (together with isomorphisms), we obtain the class of all nonempty finite trees (recall that a \emph{tree} is a connected cycle-free graph). The category of finite trees with evolutions consisting of simple transitions clearly has proper amalgamations and its \fra\ limit leads to the unique tree $\mathbf T$ whose each vertex has infinite degree. Its natural envelope $\ovr{\mathbf T}$ has a dense $G_\delta$ set of isolated vertices, because the sociable transition described above is actually simple.
\end{example}

\section{The random graph is sociable}\label{SectionRandomGraph}

In this section we give a criterion for being sociable, showing in particular that all retracts of the random graph are in this class.

We say that a vertex $u$ in a graph $G$ is a \emph{sentinel}, if it is adjacent to each vertex of $G$.

\begin{lm}\label{LMeourghoirg}
	Assume a graph $G = \bigcup_{\ntr} G_n$ is such that $\ciag G$ is a chain of finite induced subgraphs. If each $G_n$ has a sentinel then $G$ is sociable. 
\end{lm}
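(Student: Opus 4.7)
The plan is to construct an infinite sociable evolution from $\origin$ to $G$ by enumerating the vertices of $G$ one at a time, using the sentinels as retraction targets. Fix a sentinel $s_n$ of $G_n$ for each $\ntr$ and take the origin $\origin$ to be the one-vertex graph $\{s_0\}$. The enumeration will be organized in stages: stage $n$ adds the vertices of $G_n \setminus G_{n-1}$ (with the convention $G_{-1}=\emptyset$), beginning with $s_n$ itself in case $s_n \notin G_{n-1}$.

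More concretely, in stage $0$ we append the vertices of $G_0 \setminus \{s_0\}$ to the current graph $\{s_0\}$ one by one, at each step retracting the new vertex to $s_0$. For $n \geq 1$, if $s_n \notin G_{n-1}$ we first add $s_n$ as a new vertex to the current graph $G_{n-1}$, retracting it to $s_{n-1}$; afterwards we append each remaining vertex of $G_n \setminus G_{n-1}$ (other than $s_n$), each time retracting the new vertex to $s_n$. After stage $n$ the current graph is exactly $G_n$, so the infinite composition of these transitions is an evolution from $\origin$ to $\bigcup_n G_n = G$.

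To check that every transition is sociable I verify the two requirements: if $w$ is the new vertex being added to the current graph $H$ with retraction target $v$, then $w$ must be adjacent to $v$ (sociability), and every neighbor of $w$ in $H$ must be adjacent to $v$ (so that collapsing $w$ to $v$ while fixing $H$ defines a graph homomorphism). When $w \in G_n \setminus G_{n-1}$ is retracted to $s_n$, the current graph $H$ is contained in $G_n$ and $s_n$ is a sentinel of $G_n$, so $s_n$ is adjacent to $w$ and to every neighbor of $w$ in $H$. When $w = s_n$ is retracted to $s_{n-1}$ at the start of stage $n$, the current graph is exactly $G_{n-1}$, every neighbor of $s_n$ in $G_{n-1}$ is a neighbor of $s_{n-1}$ by the sentinel property of $s_{n-1}$ in $G_{n-1}$, and $s_n$ itself is adjacent to $s_{n-1}$ since $s_{n-1} \in G_n$ and $s_n$ is a sentinel of $G_n$. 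There is no genuine obstacle here; the only subtlety is that $s_n$ may fail to lie in $G_{n-1}$, which is precisely why the bridging transition retracting $s_n$ to $s_{n-1}$ is needed.
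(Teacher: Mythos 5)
Your proof is correct and follows essentially the same route as the paper: add the new vertices of each $G_{n+1}\setminus G_n$ one at a time, starting with a sentinel of $G_{n+1}$ retracted to a sentinel of $G_n$, and retract every subsequent new vertex to that sentinel of $G_{n+1}$, which dominates all neighbors involved. The only (harmless) difference is that you explicitly treat the case where the sentinel of $G_{n+1}$ already lies in $G_n$, which the paper glosses over.
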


\begin{pf}
	We may assume that $G_0$ has a single vertex, adding if necessarily a one-vertex subgraph of $G_0$ to the chain.
	Fix $n$ and assume $G_{n+1} \setminus G_n = \{v_0, \dots, v_{k-1}\}$, where $v_0$ is a sentinel in $G_{n+1}$. It is enough to show that, setting $G_n^i = G_n \cup \{v_0, \dots, v_{i-1}\}$, we can find a suitable retraction from $G_n^{i+1}$ onto $G_n^i$ for each $i \loe k$.
	
	There is a retraction from $G_n^1$ onto $G_n = G_n^0$, mapping $v_0$ to a sentinel in $G_n$. Clearly, $v_0$ is a sentinel in each $G_n^i$, therefore mapping $v_i$ to $v_{0}$ we obtain a retraction from $G_n^{i+1}$ onto $G_n^i$. Furthermore, $v_i$ is connected to $v_0$ for each $i$, therefore the retraction forms a sociable transition. This completes the proof.	
\end{pf}

\begin{wn}
	The random graph is sociable.
\end{wn}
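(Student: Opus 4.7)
The plan is to exhibit the random graph $\rgraph$ as a union of a chain of finite induced subgraphs, each of which has a sentinel, and then apply Lemma~\ref{LMeourghoirg} directly.

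First I would enumerate the vertex set of $\rgraph$ as $\sett{v_n}{\ntr}$. I would then build a chain $G_0 \subs G_1 \subs G_2 \subs \cdots$ of finite induced subgraphs of $\rgraph$ by recursion. At stage $0$, set $G_0 = \sn{v_0}$; since the edge relation is reflexive, the single vertex $v_0$ is trivially a sentinel in $G_0$. At stage $n+1$, assume $G_n$ is a finite induced subgraph of $\rgraph$ with a sentinel. Use property (R) of the random graph, applied to the finite set $A = G_n \cup \sn{v_{n+1}}$ and $B = \emptyset$, to obtain a vertex $s_{n+1} \in \rgraph \setminus A$ that is adjacent to every element of $A$. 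Define
$$G_{n+1} = G_n \cup \sn{v_{n+1}} \cup \sn{s_{n+1}}.$$
By construction, $s_{n+1}$ is adjacent to every vertex of $G_{n+1}$, so $s_{n+1}$ is a sentinel in $G_{n+1}$.

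Since $v_n \in G_n$ for every $\ntr$, the chain $\ciag G$ exhausts $\rgraph$, that is $\rgraph = \bigcup_{\ntr} G_n$. By construction each $G_n$ is a finite induced subgraph of $\rgraph$ containing a sentinel, so Lemma~\ref{LMeourghoirg} applies and gives that $\rgraph$ is sociable.

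There is no real obstacle here; the only thing to check is that property (R) suffices to produce a sentinel at every stage, which is immediate by taking $B = \emptyset$ in (R). The slight care needed is to include $v_{n+1}$ in the finite set $A$ so that the chain actually covers $\rgraph$, rather than just growing inside some proper subgraph.
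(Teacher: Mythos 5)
Your proof is correct and is essentially the same as the paper's: both enumerate $\rgraph$, set $G_0=\sn{v_0}$, and at each stage adjoin the next vertex $v_{n+1}$ together with a vertex adjacent to everything in $G_n\cup\sn{v_{n+1}}$ (obtained from (R) with $B=\emptyset$), then invoke Lemma~\ref{LMeourghoirg}. No differences worth noting.
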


\begin{pf}
	Easy induction gives a chain $\sett{G_n}{\ntr}$ of finite subgraphs of $\rgraph$ such that $\rgraph = \bigcup_\ntr G_n$ and each $G_n$ has a sentinel. Indeed, we first enumerate $\rgraph = \sett{v_n}{\ntr}$, define $G_0 = \sn{v_0}$ and having defined $G_{n-1}$, we define $G_n = G_{n-1} \cup \dn{v_n}{w}$, where $w$ is adjacent to all vertices in $G_{n-1} \cup \sn{v_n}$.	
\end{pf}

The proof above actually shows a more general statement.

\begin{wn}
	Assume $G$ is a countable graph such that for every finite $A \subs G$ there exists a vertex adjacent to all elements of $A$. Then $G$ is sociable.
	
	In particular, every retract of the random graph is sociable.
\end{wn}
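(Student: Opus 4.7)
The plan is to reduce to Lemma \ref{LMeourghoirg} by exhibiting $G$ as the union of a chain of finite induced subgraphs in which each term has a sentinel. First I would enumerate $G = \sett{v_n}{\ntr}$ (if $G$ is finite, the argument trivially terminates) and build the chain $\ciag G$ inductively. Put $G_0 = \sn{v_0}$, which has $v_0$ as a sentinel by reflexivity. Given $G_{n-1}$, apply the hypothesis to the finite set $G_{n-1} \cup \sn{v_n}$ to pick a vertex $w_n \in G$ adjacent to every element of that set, and set
\[
G_n = G_{n-1} \cup \sn{v_n} \cup \sn{w_n}.
\]
Then $w_n$ is adjacent to every vertex of $G_n$ (including itself, by reflexivity), so $w_n$ is a sentinel of $G_n$. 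Since $v_n \in G_n$ for every $\ntr$, we have $G = \bigcup_{\ntr} G_n$. Lemma \ref{LMeourghoirg} then delivers sociability of $G$.

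For the ``in particular'' part, let $H$ be a retract of $\rgraph$, witnessed by a retraction $\map r \rgraph H$. Given a finite set $A \subs H$, property (R) of $\rgraph$ (applied with $B = \emptyset$) yields a vertex $u \in \rgraph$ adjacent to every element of $A$. Since $r$ is a graph homomorphism and equals the identity on $H$, the vertex $r(u) \in H$ satisfies that $\pair{r(u)}{a} = \pair{r(u)}{r(a)}$ is an edge of $H$ for every $a \in A$. Hence $H$ satisfies the hypothesis of the corollary and is therefore sociable by the first part.

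There is no essential obstacle here: the construction is forced by the formulation of Lemma \ref{LMeourghoirg}, and the only subtlety is remembering that reflexivity makes every vertex count as adjacent to itself, so that the freshly picked $w_n$ is genuinely a sentinel of $G_n$ rather than merely of $G_n \setminus \sn{w_n}$.
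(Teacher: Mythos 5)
Your proof is correct and is essentially the paper's argument: the paper proves the random-graph case by exactly this enumeration-plus-sentinel chain feeding into Lemma~\ref{LMeourghoirg}, and then observes that the same proof gives the general statement; your verification of the ``in particular'' part by pushing the witness vertex through the retraction is the intended (though unstated) step. No gaps.
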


We now give an explicit construction of a sequence of retractions on the random graph, obtaining a universal profinite graph.

\begin{tw}
There is a closed graph $G$ on $2^\omega$ without isolated vertices (in the sense of graph theory) that contains a dense induced copy of the random graph $\rgraph$. In particular, every countable graph embeds into a closed graph on $2^\omega$. 
\end{tw}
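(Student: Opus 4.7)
My plan is to invoke the corollary above that $\rgraph$ is sociable, and to construct a carefully chosen envelope of $\rgraph$ (in the sense of Section~4) whose underlying space is $2^\omega$. By Proposition~\ref{PROPdensitySixkm} such an envelope automatically contains $\rgraph$ as a topologically dense induced subgraph, and since $\rgraph$ is positively $3$-saturated, Lemma~\ref{Weakly3SaturatedProfiniteHasNoIsolatedPoints} will then rule out graph-theoretically isolated vertices. The universality of $\rgraph$ yields the ``in particular'' clause.

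I would start by enumerating $\rgraph = \setof{u_n}{\ntr}$ and recursively building a chain $G_0\subs G_1\subs\cdots$ of finite induced subgraphs of $\rgraph$ with $\bigcup_\ntr G_n = \rgraph$, each $G_n$ equipped with a sentinel $s_n$, as in the proof of the corollary following Lemma~\ref{LMeourghoirg}. Refining each inclusion $G_n \subs G_{n+1}$ into one-vertex transitions---adding the new sentinel $s_{n+1}$ first and then further vertices, each mapped by the retraction to the current sentinel---gives a sociable evolution ending at $\rgraph$.

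To ensure that the resulting envelope has no topologically isolated points, I would interleave \emph{duplication} transitions into the evolution. At prescribed stages, for a previously chosen target vertex $x \in G_n$, I apply property (R) of $\rgraph$ to $A = \sn x$ and $B = G_n \setminus (N(x) \cup \sn x)$, where $N(x)$ is the neighborhood of $x$ in $\rgraph$, to obtain a fresh $w \in \rgraph \setminus G_n$ adjacent to $x$ and to no vertex of $G_n$ outside $N(x) \cup \sn x$. Setting $r(w) = x$ defines a sociable transition and gives $x$ an additional preimage at the next stage. A standard diagonal bookkeeping interleaves these duplication steps with the enumeration so that $\bigcup_\ntr G_n = \rgraph$ while, at the same time, for every $n$ and every $x \in G_n$ there are infinitely many $m > n$ with $|(r^m_n)^{-1}(x)| \geq 2$.

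Let $G$ be the envelope of this evolution. By the duplication condition, every thread in the inverse limit has another thread agreeing with it on arbitrarily long initial segments, so the underlying space of $G$ is a nonempty zero-dimensional compact metrizable space with no topologically isolated points, and is therefore homeomorphic to $2^\omega$. By Proposition~\ref{PROPdensitySixkm}, $\rgraph$ is a dense induced subgraph of $G$, and by Lemma~\ref{Weakly3SaturatedProfiniteHasNoIsolatedPoints} applied to $\rgraph \subs G$, the graph $G$ has no graph-theoretically isolated vertices. The main difficulty is the bookkeeping in the previous paragraph, which must simultaneously satisfy three demands---exhausting $\rgraph$, preserving sociability, and splitting every vertex infinitely often---and is handled by a standard priority enumeration of the countably many requirements.
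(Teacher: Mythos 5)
Your proposal is correct and follows the same overall strategy as the paper's proof: realize $\rgraph$ as the union of a chain of finite induced subgraphs with sentinels, turn the chain into an inverse sequence of retractions in which every vertex eventually acquires two preimages (so the limit is a Cantor set), and then quote Proposition~\ref{PROPdensitySixkm} for density and Lemma~\ref{Weakly3SaturatedProfiniteHasNoIsolatedPoints} to exclude graph-theoretically isolated vertices. The only real difference is in how the splitting is arranged. The paper does it wholesale: at stage $i$ it adjoins to $G_i$ a disjoint isomorphic copy $X$ of $G_i$ with no cross edges, the next enumerated vertex, and a fresh sentinel, and retracts by sending $X$ back via the isomorphism and everything else to the old sentinel --- one step per stage, no bookkeeping, and condition (2) (every vertex of $G_i$ gets a new preimage in $G_{i+1}$) holds immediately. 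You instead split vertices one at a time through sociable one-vertex transitions scheduled by a priority argument; this is more laborious but yields the slightly stronger conclusion that the envelope arises from a sociable evolution, which the statement does not require. Two small points to tidy up in your duplication step: the set $B$ must be taken relative to the \emph{current} graph at the stage where the duplication is performed (not the stage at which $x$ first appeared), since the retraction must be a homomorphism on all vertices present at that moment; and property (R) does not by itself guarantee that the witness $w$ lies outside the current finite graph, so either note that there are infinitely many witnesses, or simply apply (R) with $B$ equal to the whole current graph minus $x$, which forces $w$ to be fresh and still gives a sociable transition with $r(w)=x$.
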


\begin{proof}
Let $\{x_n:n\in\omega\}$ be an enumeration of all vertices of the random graph $\rgraph$. We will define graphs $G_0\subseteq G_1\subseteq G_2\subseteq\dots$ and projections $\map {p_i}{G_{i+1}}{G_i}$ such that 
\begin{itemize}
    \item [(1)] $G_i$ is induced subgraph of $\rgraph$ containing $x_i$;
    \item [(2)] for any $x\in G_i$ there is $y\in G_{i+1}\setminus G_i$ with $p_i(y)=x$;
    \item[(3)] $G_i$ has a sentinel $u_i$.
\end{itemize}
We start the construction from $G_0=\{x_0\}$. Assume that we have already constructed $G_0,G_1,\dots,G_i$. In the random graph $\rgraph$ we find an isomorphic disjoint copy of $G_i$, say $X$, such that any element of $G_i$ is not adjacent to any element of $X$. Let $\map fX{G_i}$ be a graph isomorphism. Let $v$ be a vertex in $\rgraph$ adjacent with every vertex in the finite set $G_i\cup X\cup\{x_{i+1}\}$. Put $G_{i+1}:=G_i\cup X\cup\{x_{i+1},v\}$. It is an induced subgraph of $\rgraph$. Define $\map {p_i}{G_{i+1}}{G_i}$ as follows
\[
p_{i}(y):=\left\{\begin{array}{ccc}
y&\mbox{if}&y\in G_{i};\\
f(y)&\mbox{if}&y\in X;\\
u_i&\mbox{if}&\text{otherwise}.
\end{array}\right.
\]
Note that if $x_{i+1}\in G_i$, then $p_i(x_{i+1})=x_{i+1}$; if $x_{i+1}\in X$, then $p_i(x_{i+1})=f(x_{i+1})$; if $x_{i+1}\notin G_i\cup X$, then $p_i(x_{i+1})=u$. It is easy to see that $p_i$ is a retraction. Moreover, $G_i$ fulfills (1)--(3). The direct limit of $G_i$'s is isomorphic to $\bigcup_{i\in\omega}G_i=\rgraph$ and it is dense in the inverse limit $G$ of $\langle (G_i)_{i\in\omega}, (p_i)_{i\in\omega} \rangle$. Since for every $i\in\omega$ and every $x\in G_i$ there is $y\in G_{i+1}\setminus G_i$ such that $p_i(y)=x=p_i(x)$, then $G$ is homeomophic to $2^\omega$.  

By Lemma~\ref{Weakly3SaturatedProfiniteHasNoIsolatedPoints} any closed graph on $2^\omega$ that contains a dense induced copy of $\rgraph$ has no isolated vertices (in the graph-theoretic sense).
\end{proof}

Since the random graph is isomorphic to its complement, we also get the following:

\begin{wn}
	There is an open graph $G$ on $2^\omega$ that contains a dense induced copy of the 
	random graph.
	In particular, every countable graph embeds into an open graph on $2^\omega$.
\end{wn}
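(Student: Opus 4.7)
The plan is to derive this corollary from the previous theorem by passing to the dual (complement) graph, exploiting the self-complementarity of the random graph. Recall that for a reflexive graph $H = (V, E)$ on a topological space $V$, the \emph{dual graph} $H^*$ has edge relation $E^* := (V \times V) \setminus (E \setminus \Delta)$, where $\Delta$ is the diagonal. Equivalently, $E^* \setminus \Delta = (V \times V) \setminus E$. If $E$ is closed in $V^2$, then $E^* \setminus \Delta$ is open in $V^2$, so $H^*$ is an open graph in the sense of this paper.

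First, let me take the closed graph $G$ on $2^\omega$ provided by the previous theorem, which contains a dense subset $D \subs 2^\omega$ such that the induced subgraph of $G$ on $D$ is isomorphic to the random graph $\rgraph$. Let $G^*$ be its dual graph. By the observation above, $G^*$ is an open graph on $2^\omega$. The induced subgraph of $G^*$ on $D$ is precisely the complement (in the usual graph-theoretic sense) of the induced subgraph of $G$ on $D$, hence it is isomorphic to the complement of $\rgraph$. It is a classical fact that $\rgraph$ is isomorphic to its complement (this follows immediately from property~(R), which is self-dual under swapping $A$ and $B$). Therefore the induced subgraph of $G^*$ on $D$ is isomorphic to $\rgraph$, and $D$ remains topologically dense in $2^\omega$.

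For the \emph{in particular} part, every countable graph embeds into the random graph (this is the universality of $\rgraph$, already mentioned in the preliminary section), and $\rgraph$ embeds as an induced subgraph of $G^*$ by the above. Composing the two embeddings yields the desired embedding of every countable graph into an open graph on $2^\omega$.

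I do not expect any real obstacles; the only point requiring a moment of care is the handling of the diagonal when dualizing (since in this paper all graphs are reflexive, but the topological conditions for ``open'' and ``closed'' treat the diagonal asymmetrically), but this is exactly the behavior already noted in the remark following Corollary~\ref{ThereAreNoOmegaSaturatedCompactGraphs}, where the authors observe that the dual of an open graph is compact.
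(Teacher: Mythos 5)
Your proposal is correct and is exactly the argument the paper intends: the corollary is prefaced by ``Since the random graph is isomorphic to its complement, we also get the following,'' i.e.\ one complements the closed graph from the preceding theorem and uses self-complementarity of $\rgraph$. Your explicit handling of the diagonal and of the ``in particular'' clause via universality of $\rgraph$ fills in the (omitted) details faithfully.
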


\section{Henson's graphs are not finitely retractable}\label{SectionHensonGraphs}

Recall that \emph{Henson's universal $K_n$-free graph} is the \fra\ limit of finite $K_n$-free graphs, namely, the unique countable homogeneous $K_n$-free graph containing isomorphic copies of all countable $K_n$-free graphs. We denote this graph by $\Hen n$.
Recall that $K_n$ denotes, as usual, the complete graph with $n$ vertices.

\begin{tw}
Given $\ell>2$, the graph $\Hen \ell$ is not finitely retractable.
\end{tw}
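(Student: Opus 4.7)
I argue by contradiction, assuming some finite $F \subseteq \Hen \ell$ is contained in the image of a retraction $\map{r}{\Hen \ell}{A}$ with $A$ finite. The plan is to derive a necessary ``local extension'' property of $A$ from Henson's extension axiom, then exploit it to build a witness vertex in $\Hen \ell$ whose image under $r$ is forced into an empty set.

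\medskip

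First, I observe that $A$ must satisfy $(\dagger)$: every $K_{\ell-1}$-free subset $S \subseteq A$ has a common neighbor in $A$. Indeed, by Henson's extension property, some $v \in \Hen \ell$ realizes $N(v) \cap A = S$, and since $r$ is a graph homomorphism fixing $A$ pointwise, $r(v) \in A$ must be adjacent in $A$ to every element of $S$.

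\medskip

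Second, $A$ must contain a $K_{\ell-1}$-clique. If not, then $A$ is itself $K_{\ell-1}$-free, so $(\dagger)$ applied to $S = A$ yields $v \in A$ adjacent to all of $A$; together with any $K_{\ell-2}$-subclique of $A$ this would give a $K_{\ell-1}$ in $A$, forcing $A$ to be $K_{\ell-2}$-free, and recursively $A$ must be empty, contradicting $F \subseteq A$ once $F$ is nonempty.

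\medskip

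Third, fix such a clique $K = \{a_1, \ldots, a_{\ell-1}\} \subseteq A$ and, for each index $i$, let
\[
P_i := \{a \in A : a \text{ is adjacent to every vertex of } K \setminus \{a_i\}\}.
\]
Any two distinct elements of $P_i$ together with $K \setminus \{a_i\}$ would form a $K_\ell$, so each $P_i$ is an independent set in $A$.

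\medskip

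Fourth, I construct a witness configuration in $\Hen \ell$. Using the extension property, pick vertices $y_i \in \Hen \ell$ with $N(y_i) \cap A = K \setminus \{a_i\}$, chosen pairwise non-adjacent. Then pick $x \in \Hen \ell$ with $N(x) \cap A = \emptyset$ adjacent to all of $y_1, \ldots, y_{\ell-1}$; the combined configuration is easily checked to be $K_\ell$-free and so embeds in $\Hen \ell$. The retraction forces $r(y_i) \in P_i$ for each $i$, and
\[
r(x) \in \bigcap_{i=1}^{\ell-1} N_A\bigl(r(y_i)\bigr).
\]

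\medskip

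Fifth, the contradiction will be that for an appropriate choice of $F$, the above intersection is empty no matter how the $r(y_i)$'s are selected. The naive choice $F = K$ is insufficient, because $r(y_i) = a_i$ gives $\bigcap N_A(a_i) \supseteq K$. The key is therefore to enrich $F$ enough so that the diagonal choice is blocked by further edge constraints. Concretely, for $\ell = 3$, I would take $F$ to contain $C_5$: the five ``distance-2'' types over $C_5$ yield five $y$-families $Y_i$, and by pigeonhole each has a constant $r$-value on an infinite subfamily. A witness $x$ simultaneously adjacent to one vertex from each $Y_i$ then requires a common neighbor of five vertices of $A$, which the $K_3$-freeness of $A$ and the no-consecutive-$p_i$'s constraint (coming from edges between $Y_i$ and $Y_{i+1}$) make impossible. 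For general $\ell$, the role of $C_5$ is played by an analogous small odd-girth structure embedded in $\Hen \ell$ that forces $r(y_i) \neq a_i$ for at least one $i$ by imposing extra adjacencies incompatible with the reflexive loop at $a_i$.

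\medskip

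The main obstacle is this final combinatorial step — showing that no enrichment of $K$ in $A$ can simultaneously supply, for every assignment $(p_1, \ldots, p_{\ell-1}) \in P_1 \times \cdots \times P_{\ell-1}$, a vertex of $A$ adjacent to every $p_i$ while remaining compatible with all edge constraints between the $y$-families. I expect this to reduce to a rigidity statement about $K_\ell$-free graphs, provable by pigeonhole on $P_i$ together with the extension property applied iteratively inside $\Hen \ell$.
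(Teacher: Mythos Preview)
Your setup through step four is sound: the observation $(\dagger)$ that every $K_{\ell-1}$-free subset of a finite retract $A$ has a common neighbour in $A$ is exactly the leverage the extension property gives you, and your construction of the $y_i$ and $x$ is correct. But the proof is not complete. You say so yourself: ``The main obstacle is this final combinatorial step,'' and then offer only a sketch for $\ell=3$ involving $C_5$ and an unspecified ``analogous small odd-girth structure'' for larger $\ell$. That sketch is not an argument. The pigeonhole step is vague (constant $r$-value on which family, and why does that block all assignments?), and the ``no-consecutive-$p_i$'' constraint is asserted rather than derived; with loops present, $r(x)$ can coincide with some $r(y_i)$, so adjacency-to-all does not immediately force a $K_\ell$. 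Until you actually produce a finite $F$ and prove that \emph{every} finite $A\supseteq F$ satisfying $(\dagger)$ leads to a contradiction, you have a plan, not a proof.

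The paper takes a different route that sidesteps this difficulty entirely. Rather than hunting for a single configuration whose retracted image is impossible, it \emph{iterates} inside the putative finite retract $H$. It fixes a small graph $G_0$ containing two distinct maximal ``accessible'' sets $A_0,B_0$ (for $\ell=3$ these are just two disjoint independent pairs; for $\ell>3$ one takes $K_{\ell-1}$ minus an edge). Using $(\dagger)$ one finds $p\in H$ adjacent exactly to $A_0$ and $q\in H$ adjacent exactly to $B_0$; then $A_1=A_0\cup\{q\}$ and $B_1=B_0\cup\{p\}$ are again maximal accessible, and the process repeats, producing an infinite strictly increasing chain inside $H$. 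The contradiction is simply that $H$ was supposed to be finite. This avoids the delicate case analysis your approach requires and works uniformly for all $\ell$.
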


\begin{pf}
We first present the arguments for $\ell=3$.
Namely, let $G_0$ be the graph with vertices $a,b,c,d$ and nontrivial edges $a - c$, $b - d$ (recall that we consider graphs with loops).
As $G_0$ is $K_3$-free, we may assume that it is contained in $\Hen3$. 
We claim that $G_0$ is not contained in any finite complemented subgraph of $\Hen3$.
Namely, suppose $H \subs \Hen3$ is a complemented subgraph such that $G_0 \subs H$.

Define $A_0 = \{ a, b \}$, $B_0 = \{ c, d \}$.
We claim that there exist vertices $p_0,q_0$ in $H$ such that $p_0$ is adjacent precisely to $A_0$ and $q_0$ is adjacent precisely to $B_0$.
Indeed, such vertices must exist in $\Hen3$ and therefore their images under a projection onto $H$ satisfy the same requirements. It is important that $p_0 \ne q_0$ and $p_0, q_0 \notin G_0$.
We now set $G_1 = G_0 \cup \dn {p_0}{q_0}$ and
$$A_1 = A_0 \cup \sn {q_0}, \qquad B_1 = B_0 \cup \sn {p_0}.$$
Note that the sets $A_1$, $B_1$ are independent and the same argument as above yields new vertices $p_1, q_1 \in H$ such that $p_1$ is adjacent precisely to $A_1$ and $q_1$ is adjacent precisely to $B_1$.
Again, $p_1 \ne q_1$.
We continue like this, obtaining a strictly increasing chain
$$G_0 \subs G_1 \subs G_2 \subs \dots \subs H$$
showing that $H$ is infinite.

Now assume $\ell>2$ is arbitrary.
Given a graph $G$, we say that its subset $A$ is \emph{accessible} in $G$ if it is $K_{\ell-1}$-free and no vertex in $G$ is adjacent
to all elements of $A$.
By a \emph{maximal accessible set} we mean a set that is accessible and not contained in a bigger accessible set. Note that some graphs (e.g. $K_{\ell-2}$) do not contain nonempty accessible sets.
On the other hand, every at least 2-element $K_2$-free set is accessible in every graph.

We now fix a finite graph $G_0 \subs \Hen \ell$ with two different maximal accessible sets $A_0, B_0$, each of them containing a copy of $K_{\ell-2}$.
Later we show the existence of such a graph.
We claim that no finite graph containing $G_0$ is complemented in $\Hen \ell$.
For this aim, we suppose that $G_0 \subs H$, where $H$ is complemented in $\Hen \ell$ and we construct inductively a strictly increasing chain
$$G_0 \subs G_1 \subs G_2 \subs \dots \subs H$$
so that each $G_n$ contains two different maximal accessible sets $A_n,B_n$, each of them containing a copy of $K_{\ell-2}$.
This will show that $H$ is infinite.

Assume $G_n$ together with $A_n, B_n$ as above is given.
Note that there exists $p \in H$ that is adjacent to all elements of $A_n$ (and, by maximality, to no other elements of $G_n$).
This follows from the extension property of $\Hen \ell$ and from the fact that $H$ is complemented in $\Hen \ell$.
Similarly, there exists $q\in H$, adjacent to all elements of $B_n$ (and to no other elements of $G_n$).
Necessarily, $p \ne q$ and $p,q \notin G_n$, just by the definition of an accessible set.
Define $G_{n+1} = G_n \cup \dn p q$ and $A_{n+1} = A_n \cup \sn q$, $B_{n+1} = B_n \cup \sn p$.
Note that $A_{n+1}$, $B_{n+1}$ are accessible in $G_{n+1}$, because $p$ and $q$ are isolated vertices of $B_{n+1}$ and $A_{n+1}$, respectively.
Note also that $A_{n+1}\cup \sn p$ is not contained in any accessible set in $G_{n+1}$, because it contains a copy of $K_{\ell-1}$ (as $A_n \subs A_{n+1}$ already contains it).
Similarly, $B_{n+1}\cup \sn q$ is not contained in any accessible subset of $G_{n+1}$. In other words, $A_{n+1}$, $B_{n+1}$ are maximal accessible sets.
This shows that the construction can be carried out.
Finally, it remains to find $G_0$.

It is easy find $G_0$ when $\ell = 3$ (see above), so assume $\ell > 3$.
Let $G_0$ be the graph obtained from $K_{\ell-1}$ by removing exactly one edge.
Denote by $a, b$ the unique vertices that are not adjacent in $G_0$. 
Choose distinct vertices $c, d \in G_0 \setminus \dn a b$ and define $A_0 = G_0 \setminus \sn c$, $B_0 = G_0 \setminus \sn d$.
Observe that $A_0$, $B_0$ are admissible: $K_{\ell-1}$ is not contained in $G_0$ and no vertex is adjacent to all elements of $A_0$ or $B_0$.
They are also maximal with these properties, because the only missing vertex is adjacent to all elements of $G_0$.
Finally, $A_0 \cap B_0$ has a copy of $K_{\ell-2}$.
Thus, $G_0$ has all the required properties, which completes the proof.
\end{pf}

\section{A concrete \fra\ sequence for graph evolutions}

Below we describe a concrete construction of the \fra\ limit $\upprctd$ of connected finite PPR graphs.

Let $G$ be a graph and let $x\in G$ be its fixed vertex. By $N_{G}(x)$ we denote the neighborhood of $x$, that is the set of all vertices $y\in G\setminus\{x\}$ such that $\pair xy$ is an edge in $G$. We will write $N(x)$ instead of $N_{G}(x)$ if the graph structure is clear from the context. 

\begin{lm}\label{1PointExtension} Assume that $G$ is a subgraph of $H$, $H\setminus G=\{x\}$ and $\map pHG$ is a retraction. Then $N_{H}(x)\subseteq N_{G}(p(x))\cup\{p(x)\}$, that is, if $x$ and $y$ are adjacent for some $y\neq x$, then so are $p(x)$ and $y$.  
\end{lm}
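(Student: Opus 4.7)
The plan is to unpack the definition of a retraction and apply the homomorphism property directly. A retraction $\map pHG$ is, by definition, a graph homomorphism that restricts to the identity on $G$. So the key facts I will use are: (i) $p$ preserves edges, and (ii) $p(y) = y$ for every $y \in G$.

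The argument will proceed as follows. Fix $y \in N_H(x)$, so $y \neq x$ and $\pair{x}{y}$ is an edge in $H$. Because $H \setminus G = \{x\}$ and $y \neq x$, we have $y \in G$, and hence $p(y) = y$. Applying $p$ to the edge $\pair{x}{y}$ and using that $p$ is a graph homomorphism, we conclude that $\pair{p(x)}{p(y)} = \pair{p(x)}{y}$ is an edge in $G$. Now split on whether $y = p(x)$: if $y = p(x)$, then $y \in \{p(x)\}$; otherwise $y \neq p(x)$, and the edge $\pair{p(x)}{y}$ in $G$ (together with $y \neq p(x)$) gives $y \in N_G(p(x))$. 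Either way, $y \in N_G(p(x)) \cup \{p(x)\}$, which is exactly the desired inclusion.

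I do not expect any real obstacle here; the lemma is essentially a one-line unfolding of the definitions. The only subtlety worth flagging is the loop convention: because the paper treats edges as reflexive, the case $p(x) = y$ formally corresponds to an edge $\pair{p(x)}{p(x)}$, which is a loop rather than a neighbor, and this is precisely why the statement needs the extra term $\cup\,\{p(x)\}$ rather than simply $N_H(x) \subseteq N_G(p(x))$.
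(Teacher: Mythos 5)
Your proof is correct and follows the same route as the paper's: take a neighbor $y$ of $x$, note $y\in G$ so $p(y)=y$, and apply the homomorphism property of $p$ to the edge $\pair{x}{y}$. The extra remark about the reflexivity convention explaining the $\cup\,\{p(x)\}$ term is accurate but not needed for the argument.
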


\begin{proof}
	Let $z\in N_{H}(x)$. Since $z\neq x$, then $z\in G$. Since $p$ preserves edges, then $\{p(x),p(z)\}$ is an edge in $G$. But $p(z)=z$, so $z\in N_{G}(p(x))\cup\{p(x)\}$. 
\end{proof}

From Lemma \ref{1PointExtension} we immediately obtain that if $G$ is a retract of its one-vertex extension $H$, then $\diam G\leq \diam H\leq \diam G+1$. 

\begin{lm}\label{DecompositionLemma}
	Assume that $G$ is a subgraph of a finite graph $H$. Assume that $\map pHG$ is a retraction such that $N_H(x)\subseteq N_{G}(p(x))\cup\{p(x)\}$ for every $x\in H$. Then there is a chain of one-vertex extensions
	\[
	\begin{tikzcd}
		G=F_0&\arrow[l, "p_{0}"'] F_1&\arrow[l, "p_{1}"']F_2&\arrow[l,]\cdots&\arrow[l,"p_{k-1}"'] F_k=H
	\end{tikzcd}
	\]
	such that each $p_i$ is a retraction. 
\end{lm}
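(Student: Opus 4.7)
The plan is to decompose $p$ by adding the vertices of $H\setminus G$ one at a time in an arbitrary order. Specifically, enumerate $H\setminus G=\{x_1,\dots,x_k\}$, let $F_i$ be the induced subgraph of $H$ on the vertex set $V(G)\cup\{x_1,\dots,x_i\}$, and define $\map{p_i}{F_{i+1}}{F_i}$ by $p_i\rest F_i=\id{F_i}$ and $p_i(x_{i+1})=p(x_{i+1})$. Note that $p(x_{i+1})\in G\subs F_i$, so $p_i$ is well-defined, and it is a retraction by construction provided it is a graph homomorphism.

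The heart of the argument is checking that each $p_i$ preserves edges. The only edges whose preservation is not automatic are those of the form $\{x_{i+1},y\}$ with $y\in F_{i+1}\setminus\sn{x_{i+1}}$. Because $F_{i+1}$ is induced in $H$, such a $y$ lies in $N_H(x_{i+1})$, and the hypothesis gives $y\in N_G(p(x_{i+1}))\cup\sn{p(x_{i+1})}$. If $y=p(x_{i+1})$, then $\{p_i(x_{i+1}),p_i(y)\}=\{p(x_{i+1}),p(x_{i+1})\}$ is the reflexive loop, hence an edge of $F_i$; otherwise $y\in N_G(p(x_{i+1}))\subs G\subs F_i$, and $\{p(x_{i+1}),y\}$ is an edge of $G$, hence of $F_i$. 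Loops at vertices of $F_i$ are trivially preserved, so $p_i$ is a graph homomorphism.

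As a bonus, note that the hypothesis actually forces any $y\in N_H(x_{i+1})$ with $y\ne x_{i+1}$ to lie in $G$, so the case $y\in\{x_1,\dots,x_i\}$ cannot occur; this is the implicit reason the enumeration order is irrelevant. Finally, $F_0=G$ and $F_k=H$ as induced subgraphs on $V(H)$, and each inclusion $F_i\subs F_{i+1}$ is a one-vertex extension retracted by $p_i$, yielding the desired chain. One can also observe that $p=p_0\cmp p_1\cmp\cdots\cmp p_{k-1}$, though this is not demanded by the statement.

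The main obstacle is really just the edge-preservation verification, which is a direct and uniform unpacking of the hypothesis; the rest is a bookkeeping induction. No choice in the enumeration needs to be made carefully, and no amalgamation or auxiliary construction is required.
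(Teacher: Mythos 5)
Your proof is correct and follows essentially the same route as the paper: add the vertices of $H\setminus G$ one at a time in an arbitrary order, let each $p_i$ be the identity on $F_i$ sending the new vertex $x_{i+1}$ to $p(x_{i+1})$, and verify edge-preservation directly from the hypothesis $N_H(x_{i+1})\subseteq N_G(p(x_{i+1}))\cup\{p(x_{i+1})\}$. If anything, your observation that the hypothesis forces $N_H(x_{i+1})\subseteq G$ (so the enumeration order is genuinely irrelevant) makes explicit a point the paper leaves implicit.
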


\begin{proof} 
	Let $H\setminus G=\{y_1,\dots,y_k\}$. Let $F_0=G$ and $F_i=F_{i-1}\cup\{y_i\}$ for $i=1,\dots,k$ be induced subgraphs of $H$. Let $\map{p_{i-1}}{F_i}{F_{i-1}}$ be the identity on $F_{i-1}$ and $p_{i-1}(y_i):=p(y_i)$. 
	We need only to show that if
	$x\in F_{i-1}$ is such that $x$ and $y_i$ are adjacent in $F_i$, then $x=p(x)$ and $p(y_i)$ are adjacent in $F_{i-1}$. Since $N_H(y_i)\subseteq N_G(p(y_i))\cup\{p(y_i)\}$, then $x\in N_G(p(y_i))$ or $x=p(y_i)$. Thus $x$ and $p(y_i)$ are adjacent in $G$. \end{proof}

Let $G_1$ be a graph consisting of two vertices and the edge between them (and the loops). Inductively we define $G_1,G_2,\dots$ as follows. Suppose that we have already defined $G_k$. For any vertex $x\in G_k$ let $N(x)=\{y_1,\dots,y_l\}$. For any sequence $t=(t_0,t_1,\dots,t_l)\in\{0,1\}^{l+1}\setminus\{(0,0,\dots,0)\}$ by $x_t$ we denote a new vertex and we say that $x_t$ and $y_i$ are adjacent $\iff$ $t_i=1$, where $y_0=x$. There are no other new edges than those prescribed. After repeating the same construction for every vertex of $G_k$, we obtain $G_{k+1}$. We define $\map {p_{k}}{G_{k+1}}{G_k}$ by putting $p_{k}(x_t)=x$ and $p_{k}(x)=x$ for every vertex $x\in G_k$. Clearly $p_{k}$ is a quotient map and retraction.  

Below we present graphs $G_1$: 
\begin{center}
	\begin{tikzpicture}[every node/.style={circle,inner sep=2pt,fill=black}]
		\node[color=white] (A) at (0,1) {};
		\node[color=red] (B) at (1,1) {};
		\node (C) at (4,1) {};

		\draw 
		(B) -- (C);
	\end{tikzpicture}
\end{center}
and $G_2$: 
\begin{center}
	\begin{tikzpicture}[every node/.style={circle,inner sep=2pt,fill=black}]
		\node[color=red] (A) at (0,1) {};
		\node[color=red] (B) at (1,1) {};
		\node (C) at (4,1) {};
		\node[color=red] (D) at (2,0) {};
		\node (E) at (3,0) {};
		\node[color=red] (F) at (2,2) {};
		\node (G) at (3,2) {};
		\node (H) at (5,1) {};

		\draw (A) -- (B)
		(B) -- (C)
		(B) -- (D)
		(B) -- (E)
		(B) -- (G)
		(C) -- (D)
		(C) -- (E)
		(C) -- (F)
		(C) -- (H);
	\end{tikzpicture}
\end{center}

Black vertices in $G_2$ are mapped via $p_{1}$ onto the black vertex in $G_1$ and the same for red ones. Graph $G_3$ has 160 vertices so it would be hard to draw it here. Note that $\diam G_1=1$ and $\diam G_2=3$. It can be easily shown that $\diam G_k=2k-1$.


Note that for any $x_t\in G_{k+1}$ we have $N_{G_{k+1}}(x_t)\subseteq N_{G_k}(x)\cup\{x\}$. So using Lemma \ref{DecompositionLemma} and a simple inductive argument we obtain that each $G_k$ is strongly finitely retractable and there is an evolution from $G_k$ to $G_{k+1}$, say \begin{tikzcd}G_k\arrow[r, rightsquigarrow, "f_k"]& G_{k+1}\end{tikzcd}.  

\begin{lm}\label{1PointExtensionEmbeds}
	Let $G$ be a subgraph of $G_{k}$. 
	Let $H$ be a one-point extension of $G$, i.e. $H\setminus G=\{x\}$, and let $\map rHG$ be a retraction. Then there is a point $x'\in G_{k+1}\setminus G_k$ such that $p_{k}(x')=r(x)$. Moreover, $\map fH{G\cup\{x'\}}$ given by $f(x)=x'$, and $f(t)=t$ otherwise, is a graph isomorphism. 
\end{lm}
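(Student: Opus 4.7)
The plan is to read off the vertex $x'$ directly from the combinatorial recipe used to construct $G_{k+1}$ from $G_k$. First I would set $y := r(x) \in G \subseteq G_k$ and apply Lemma \ref{1PointExtension} to the retraction $r$, obtaining
\[
N_H(x) \subseteq N_G(y) \cup \{y\} \subseteq N_{G_k}(y) \cup \{y\},
\]
where the second inclusion uses that $G$ is an induced subgraph of $G_k$. Fixing the enumeration $N_{G_k}(y) = \{z_1, \ldots, z_l\}$ that was used when building $G_{k+1}$, I would then define a binary sequence $t = (t_0, t_1, \ldots, t_l)$ by $t_0 = 1 \iff y \in N_H(x)$ and $t_i = 1 \iff z_i \in N_H(x)$ for $1 \leq i \leq l$. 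In the connected framework of this section (so $N_H(x) \nnempty$), the inclusion above forces at least one coordinate of $t$ to be $1$; hence the vertex $x' := y_t$ is actually created at stage $k+1$ and lies in $G_{k+1} \setminus G_k$. The identity $p_k(x') = y = r(x)$ is then immediate from the definition of $p_k$.

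For the ``moreover'' part, $f$ is obviously a bijection that is the identity on $G$, so I only need to match the $H$-edges between $x$ and $G$ with the $G_{k+1}$-edges between $x'$ and $G$. For $v \in G$, the construction of $G_{k+1}$ declares $\{x', v\}$ an edge precisely when $v \in \{y, z_1, \ldots, z_l\}$ and the corresponding coordinate of $t$ equals $1$; by the choice of $t$ this is equivalent to $v \in N_H(x)$, which is exactly the condition for $\{x, v\}$ to be an edge in $H$. Vertices of $G$ lying outside $\{y, z_1, \ldots, z_l\}$ cannot belong to $N_H(x)$, by the displayed inclusion, nor are they adjacent to $x'$ by construction, so both sides of the equivalence fail together.

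The main subtlety to keep an eye on is that $N_{G_k}(y)$ may strictly contain $N_G(y)$, so $x'$ can a priori have neighbors $z_i$ outside of $G$. This is harmless: such $z_i$ simply do not belong to the induced subgraph $G \cup \{x'\}$, and therefore do not enter the isomorphism check for $f$. The other point to flag is the requirement $t \ne (0,\ldots,0)$, which is the same as $N_H(x) \nnempty$; this is consistent with the connected setting in which the lemma is applied, and without it the statement cannot hold since $G_{k+1}$ contains no graph-theoretically isolated vertices.
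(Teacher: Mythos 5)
Your proposal is correct and follows essentially the same route as the paper: apply Lemma~\ref{1PointExtension} to get $N_H(x)\subseteq N_{G_k}(r(x))\cup\{r(x)\}$ and then read off the vertex $x'$ from the construction of $G_{k+1}$; you merely make explicit the index sequence $t$ that the paper leaves implicit. Your observation that $t\ne(0,\dots,0)$ requires $N_H(x)\nnempty$ (i.e.\ the connected setting) is a legitimate point that the paper's terser proof glosses over.
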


\begin{proof}
	By Lemma \ref{1PointExtension}, $N_H(x)\subseteq N_{G}(r(x))\cup\{r(x)\}\subseteq N_{G_k}(r(x))\cup\{r(x)\}$. By the construction of $G_{k+1}$ there is $x'\in G_{k+1}\setminus G_k$ such that $x'$ is adjacent to all vertices of $N_H(x)$, and only to them, and $p_k(x')=r(x)$. Clearly $f$ is an isomorphism. 
\end{proof}

\begin{tw}
	Given $f_0, f_1, \ldots$ as above,
	\begin{equation}\label{EvolutionSequenceConstruction}
		\begin{tikzcd}
			G_0\arrow[r, rightsquigarrow, "f_{0}"]& G_1\arrow[r,rightsquigarrow, "f_{1}"]& G_2\arrow[r,rightsquigarrow,"f_{2}"]& \dots
		\end{tikzcd} 
	\end{equation}
	is a \fra\ sequence in the class of all connected and point-by-point retractable finite graphs.  
\end{tw}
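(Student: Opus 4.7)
The plan is to verify the defining lifting property of a \fra\ sequence: for every finite connected PPR graph $A$ and every evolution $g$ from $G_k$ to $A$, produce $n>k$ and an evolution $h$ from $A$ to $G_n$ such that $h \cmp g$ coincides with the canonical evolution $f_{n-1} \cmp \dots \cmp f_k$ from $G_k$ to $G_n$. Unfolding $g$ as a chain of one-point transitions $G_k = F_0 \subs F_1 \subs \dots \subs F_m = A$ with retractions $\map{r_i}{F_{i+1}}{F_i}$ (letting $y_i$ denote the unique vertex of $F_{i+1} \setminus F_i$), the argument proceeds in three parts.

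First I would inductively embed the chain $(F_i)$ into the sequence $(G_{k+i})$ via maps $\map{\iota_i}{F_i}{G_{k+i}}$, starting from $\iota_0 = \id_{G_k}$ and extending $\iota_i$ to $\iota_{i+1}$ under the inclusion $G_{k+i} \subs G_{k+i+1}$. At the inductive step, Lemma~\ref{1PointExtensionEmbeds} applied to the one-point extension $F_i \subs F_{i+1}$ yields a vertex $y'_i \in G_{k+i+1} \setminus G_{k+i}$ with $p_{k+i}(y'_i) = \iota_i(r_i(y_i))$ and $\iota_i(F_i) \cup \sn{y'_i} \iso F_{i+1}$, and I set $\iota_{i+1}(y_i) = y'_i$.

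Second, in parallel I would construct retractions $\map{q_i}{G_{k+i}}{\iota_i(F_i)}$ that are compatible with the sequence in the sense that $(r_0 \cmp \dots \cmp r_{i-1}) \cmp q_i = p_k^{k+i}$. Define $q_0 = \id_{G_k}$; inductively, let $q_{i+1}$ be the identity on $\iota_{i+1}(F_{i+1})$, agree with $q_i$ on $G_{k+i} \setminus \iota_i(F_i)$, and send every $z \in G_{k+i+1} \setminus (G_{k+i} \cup \sn{y'_i})$ to $q_i(p_{k+i}(z))$. That $q_{i+1}$ is a graph homomorphism follows from the fact that in $G_{k+i+1}$ every new vertex $z$ has its neighborhood contained in $\sn{p_{k+i}(z)} \cup N_{G_{k+i}}(p_{k+i}(z))$ by construction of the sequence, and the compatibility identity for $q_{i+1}$ follows from the inductive hypothesis together with $r_i$ being the identity on $F_i$.

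The main obstacle is the third step: showing that the pair $h = (\iota_m, q_m)$ is itself an evolution, i.e., that $q_m$ decomposes into a chain of one-point retractions from $G_{k+m}$ down to $\iota_m(A)$. I would enumerate $G_{k+m} \setminus \iota_m(A)$ layer by layer from the top, removing first the vertices in $G_{k+m} \setminus G_{k+m-1}$ that lie outside $\iota_m(A)$, then those in $G_{k+m-1} \setminus G_{k+m-2}$, and so on. The crucial observation is that such a $z \in G_{k+j+1} \setminus G_{k+j}$ has all its $G_{k+j+1}$-neighbors in $\sn{p_{k+j}(z)} \cup N_{G_{k+j}}(p_{k+j}(z))$ by construction, while its only candidate neighbors in higher layers are the surviving $y'_\ell$'s, and by the description of $y'_\ell$ this forces $z \in \iota_\ell(F_\ell) \subs \iota_m(A)$ -- excluded. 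Hence the hypothesis of Lemma~\ref{1PointExtension} is met at each removal, so $z \mapsto p_{k+j}(z)$ is a legitimate one-point retraction; iterating through all layers reaches $\iota_m(A)$ and exhibits $h$ as an evolution. Setting $n = k + m$, the identity $h \cmp g = f_{n-1} \cmp \dots \cmp f_k$ is then immediate from $\iota_m \upharpoonright G_k = \id_{G_k}$ and the compatibility relation proved in step two.
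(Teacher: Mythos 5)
Your proposal is correct and follows essentially the same route as the paper: embed the given evolution into the canonical sequence step by step via Lemma~\ref{1PointExtensionEmbeds}, then peel off the superfluous vertices of $G_{k+m}$ layer by layer from the top, using exactly the paper's key observation that a removed vertex cannot be adjacent to a surviving higher-layer vertex $y'_\ell$ because $N(y'_\ell)$ lies inside the image of $F_\ell$. Your intermediate construction of the global retractions $q_i$ is a mild reorganization of the same argument rather than a different method.
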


\begin{proof}
	Assume that $H$ is connected and strongly finitely retractable finite graph and there is a chain
	\[
	\begin{tikzcd}
		G_k=H_0&\arrow[l, "r_{0}"'] H_1&\arrow[l, "r_{1}"']H_2&\arrow[l,]\cdots&\arrow[l,"r_{n-1}"'] H_n=H
	\end{tikzcd}
	\]
	such that $H_{i+1}\setminus H_i=\{x_i\}$ is a singleton and $p_i$ is a retraction. 
	By Lemma \ref{1PointExtensionEmbeds} and simple induction we may assume that $H$ is an induced subgraph of $G_{k+n}$ and $r_i(x_{i})=p_{k+i}(x_{i})$, that is $r_i$ is a restriction of $p_{k+i}$ to $H_{i+1}$. Moreover $x_i\in G_{k+i+1}\setminus G_{k+i}$ and $N_{G_{k+i+1}}(x_i)=N_{H_{i+1}}(x_i)$.
	
	
	Our aim is to define a chain of one-vertex extensions
	\[
	\begin{tikzcd}
		H=F_m&\arrow[l, "g_{m}"'] F_{m-1}&\arrow[l, "g_{m-1}"']F_{m-2}&\arrow[l,]\cdots&\arrow[l,"g_{1}"'] F_0=G_{k+n}
	\end{tikzcd}
	\]
	where $g_i$ are retractions such that 
	\[
	r_0\circ r_1\circ\cdots\circ r_{n-1}\circ g_1\circ g_2\circ\cdots\circ g_{m}=p^{k+n}_{k}.
	\]
	Firstly we remove, one by one, from $G_{k+n}$ all vertices from $G_{k+n}\setminus (G_{k+n-1}\cup\{x_{n-1}\})$. Let $\{z_1,\dots,z_{\ell_1}\}$ be an enumeration of $G_{k+n}\setminus (G_{k+n-1}\cup\{x_{n-1}\})$. Let $i\in\{1,2,\dots,\ell_1\}$. In the $i$-th step we put $F_i:=F_{i-1}\setminus\{z_i\}$ and define $\map {g_i}{F_{i-1}}{F_i}$ by $g_i(z_i)=p_{k+n-1}(z_i)$ and $g_i(z)=z$ for $z\in F_i$. By the construction $\{z_i,x_{n-1}\}$ is not a vertex in $G_{k+n}$, and therefore $g_i$ preserves the edges.
	
	Secondly we remove from $G_{k+n-1}$ all vertices from $G_{k+n}\setminus (G_{k+n-1}\cup\{x_{n-1}\})$. Let $\{z_{\ell_1+1},\dots,z_{\ell_2}\}$ be an enumeration of $G_{k+n-1}\setminus (G_{k+n-2}\cup\{x_{n-2}\})$. Let $i\in\{\ell_1+1,\dots,\ell_2\}$. In the $i$-th step we put $F_i=F_{i-1}\setminus\{z_i\}$ and define $\map {g_i}{F_{i-1}}{F_i}$ by $g_i(z_i)=p_{k+n-2}(z_i)$ and $g_i(z)=z$ for $z\in F_i$. By the construction $\{z_i,x_{n-2}\}$ is not a vertex in $G_{k+n-1}$. Suppose that $\{z_i,x_{n-1}\}$ is a vertex in $G_{k+n}$. Since $N_{G_{k+n}}(x_{n-1})=N_{H_{n}}(x_{n-1})$, then $\{z_i,x_{n-1}\}$ is a vertex in $H$, which contradicts the fact that $z_i\notin H$. Therefore $g_i$ preserves the edges.  
	
	Proceeding inductively we define all $g_i$'s. This completes the proof that \eqref{EvolutionSequenceConstruction} is a \fra\; sequence in the class of all connected PPRg finite graphs.
\end{proof}

The construction of a \fra\; sequence in the class of all finite (not necessary connected) PPR graphs is very similar. We start from the graph $\hat{G}_1:$\vspace{0.5cm}

\begin{tikzpicture}[every node/.style={circle,inner sep=2pt,fill=black}]
	\node[color=white] (X) at (0,1) {};
	\node[color=blue] (A) at (6,1) {};
	\node[color=red] (B) at (1,1) {};
	\node (C) at (4,1) {};
	
	\draw (B) -- (C);
\end{tikzpicture}
\vspace{0.5cm}\\
and in the inductive construction for every vertex $x$ in $\hat{G}_i$ and its neighborhood $N(x)=\{y_1,\dots,y_n\}$ we add, as before, new vertices $x_t$ for $t\in\{0,1\}^{n+1}$ where the vertex $x_{(0,\dots,0)}$ is isolated. Thus $\hat{G}_2$ is of the form:\vspace{0.5cm}

\begin{tikzpicture}[every node/.style={circle,inner sep=2pt,fill=black}]
	\node[color=red] (A) at (0,1) {};
	\node[color=red] (A1) at (0,2) {};  
	\node[color=red] (B) at (1,1) {};
	\node (C) at (4,1) {};
	\node[color=red] (D) at (2,0) {};
	\node (E) at (3,0) {};
	\node[color=red] (F) at (2,2) {};
	\node (G) at (3,2) {};
	\node (H) at (5,1) {};
	\node (H1) at (5,2) {};
	\node[color=blue] (I) at (6,1) {};
	\node[color=blue] (J) at (7,1) {};
	\node[color=blue] (J2) at (7,2) {};
	
	\draw (A) -- (B)
	(B) -- (C)
	(B) -- (D)
	(B) -- (E)
	(B) -- (G)
	(C) -- (D)
	(C) -- (E)
	(C) -- (F)
	(C) -- (H)
	(I) -- (J);
\end{tikzpicture}\\
As before, $\hat{p}_{2}$ maps vertices in $\hat{G}_2$ to vertices in $\hat{G}_1$ of the same color.

\section{Final remarks}

The goal of our study in this note was two-fold. First, developing graphs obtained by iterating special transitions, namely, those that add a vertex while keeping the information where it came from (encoded by a retraction). Second, studying profinite graphs that arise naturally in the process where both embeddings and retractions take an active part.

On the ``discrete" side, we have found several universal graphs (\fra\ limits of the corresponding categories), namely,
$\ufr$, $\ufrctd$, $\uppr$, $\upprctd$, $\soc$. These graphs are pairwise non-isomorphic (evidence is provided by their finite retracts), and all of them are universal, as the random graph is sociable, therefore PPR, connected, and finitely retractable.
It might be interesting to study properties of the automorphism groups of the graphs above.

On the profinite side, we have shown that neither of the natural envelopes is homogeneous, there is always a dense $G_\delta$ set of isolated vertices. Nevertheless, still the automorphism groups may be of interest, as possibly new Polish groups, not arising from classical \fra\ theory.

There is also the two-sided approach, where both embeddings and retractions are treated equally, and the \fra\ limit is a pair consisting of a profinite graph and its countable dense induced subgraph. This makes restrictions to the automorphism group, as now it consists of those topological isomorphisms that preserve the fixed countable dense subgraph.

We finish with the following open problem.

\begin{question}
	Is an infinite retract of a PPR structure again PPR? If not, then how about graphs? How about infinite retracts of sociable graphs?
\end{question}

\paragraph{Acknowledgments.} The authors would like to thank Adam Barto\v{s}, Tristan Bice, Paulina Radecka, and Paul Szeptycki for useful discussions on the topic of this note.


\end{document}